\newtheorem{thm}[subsection]{Theorem}
\newtheorem*{thmstar}{Theorem}
\newtheorem{lemma}[subsection]{Lemma}
\newtheorem{cor}[subsection]{Corollary}
\newtheorem{prop}[subsection]{Proposition}
\theoremstyle{definition}
\newtheorem{defn}[subsection]{Definition}
\newtheorem{ex}[subsection]{Example}
\theoremstyle{remark}
\newtheorem{rem}[subsection]{Remark}
\numberwithin{equation}{subsection}
\DeclareMathOperator{\Spec}{Spec}
\DeclareMathOperator{\Proj}{Proj}
\DeclareMathOperator{\Ann}{Ann}
\DeclareMathOperator{\Fitt}{Fitt}
\DeclareMathOperator{\Hom}{Hom}
\DeclareMathOperator{\Def}{Def}
\def\mapleft#1{\mathrel{%
\smash{\mathop{\longleftarrow}\limits^{#1}}}}
\def\mapright#1{\mathrel{%
\smash{\mathop{\longrightarrow}\limits^{#1}}}}
\def\maplongright#1{\mathrel{%
\mathop{\relbar\joinrel\relbar\joinrel\relbar\joinrel\longrightarrow}\limits^{#1}}}
\newcommand{\PP}{\mathbb{P}}
\newcommand{\ZZ}{\mathbb{Z}}
\renewcommand{\AA}{\mathbb{A}}
\newcommand{\eps}{\varepsilon}
\newcommand{\dd}[1]{\frac{\partial}{\partial #1}}
\newcommand{\m}{\mathfrak{m}}
\newcommand{\n}{\mathfrak{n}}
\newcommand{\CM}{\mathrm{CM}}
\newcommand{\TC}{\mathrm{H}}
\newcommand{\ra}{\longrightarrow}
\newcommand{\HC}{\mathrm{H}}
\newcommand{\Hilb}{\operatorname{Hilb}^{3t+1}}
\newcommand{\calU}{\mathscr{U}}
\newcommand{\calK}{\mathscr{K}}
\newcommand{\calO}{\mathscr{O}}
\newcommand{\calF}{\mathscr{F}}
\newcommand{\calM}{\mathscr{M}}
\newcommand{\calN}{\mathscr{N}}
\title{The space of twisted cubics}
\author{Katharina Heinrich}
\address{Department of Mathematics, KTH Royal Institute of Technology, SE 100 44 Stockholm, Sweden.}
\email{kchal@kth.se}
\author{Roy Skjelnes}
\address{Department of Mathematics, KTH Royal Institute of Technology, SE 100 44 Stockholm, Sweden.}
\email{skjelnes@kth.se}
\author{Jan Stevens}
\address{Department of Mathematical Sciences, Chalmers University of
Technology and University of Gothenburg, SE 412 96 G\"oteborg, Sweden.}
\email{stevens@chalmers.se}
\begin{document}



\maketitle

\begin{prelims}

\DisplayAbstractInEnglish

\bigskip

\DisplayKeyWords

\medskip

\DisplayMSCclass

\bigskip

\languagesection{Fran\c{c}ais}

\bigskip

\DisplayTitleInFrench

\medskip

\DisplayAbstractInFrench

\end{prelims}


\newpage

\setcounter{tocdepth}{1} 

\tableofcontents


\section{Introduction}

The main purpose of the present article is to
provide a natural and functorial compactification of the space of  twisted cubics in $\PP^3$.  The representing object is a smooth and irreducible projective scheme that is isomorphic to the twisted cubic component of the Hilbert scheme.

A twisted cubic is a smooth rational curve of degree three.  The family of such curves in $\PP^3$ is twelve dimensional. Finding a compactification of the parameter space 
with  control over the degenerations is a  fundamental question.
An immediate answer is provided by the Hilbert scheme. In their celebrated
work  \cite{PieneSchlessinger} Piene and Schlessinger described the Hilbert scheme 
$\Hilb_{\PP^3}$ of space curves with Hilbert polynomial $3t+1$. It consists of two smooth
components. One component $\HC$, the twisted cubic component,  contains the 
twisted cubics as an open subset. The other component has as  general member 
the union of a planar cubic and a point outside the plane.

An appealing feature of the Hilbert scheme  is that it
represents a functor, and thereby comes with universal defining properties. It gives a description of the families of curves including their degenerations.
Its disadvantage is clear from the description  
above --- it parametrizes also objects that are geometrically quite different from twisted cubics. 
Kontsevich's moduli space of stable maps $\overline{\calM_{0,0}}(\PP^3,3)$ is  another functorial compactification of the space of twisted cubics \cite{Kontsevich, ChungKiem}. The representing stack has been a marvelous tool for enumerative problems, but the geometry  of this compactification is quite different from the twisted cubic component $\HC$.

An interpolation between the Hilbert scheme and the Kontsevich moduli space was introduced by H\o nsen \cite{Honsen}.  We refer to it as the space of CM-curves.  A Cohen-Macaulay scheme $X$ of pure dimension one,  together with a finite morphism $X\ra \PP^n$ that generically is an immersion is  a CM-curve (see Section \ref{section_CM}). The functor  of CM-curves in  $\PP^n$ with a fixed Hilbert polynomial is represented by a proper algebraic space (\cite{Honsen}, \cite{Heinrichrep}).

In the present article we use the space of CM-curves 
with Hilbert polynomial $3t+1$
to compactify the space of twisted cubics. The most interesting case is  that of curves in $\PP^3$. 
Our main result  (see Theorems \ref{smooth} and \ref{main}) gives in particular a modular description of the twisted cubic component $\HC$:
\begin{thmstar}
Let $\CM^{3t+1}_{\PP^3}$ be the space of CM-curves in $\PP^3$ (over an algebraically closed field of arbitrary characteristic) with Hilbert polynomial $3t+1$, and let $\HC \subset \Hilb_{\PP^3}$ denote the twisted cubic component of the Hilbert scheme.
The space $\CM^{3t+1}_{\PP^3}$ is a smooth  and irreducible 
projective scheme
of dimension 12. We have an  isomorphism 
\[
\phi \colon \CM^{3t+1}_{\PP^3} \ra \HC.
\]
\end{thmstar}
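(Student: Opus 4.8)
The plan is to construct the morphism $\phi$ from universal properties and then prove it is an isomorphism by combining smoothness, dimension, and a bijection-on-points argument. First I would build $\phi$ functorially: given a family of CM-curves over a base $S$, that is, a Cohen-Macaulay scheme $\calX \ra S$ of relative dimension one together with a finite $S$-morphism $f\colon \calX \ra \PP^3_S$ generically an immersion, I would push forward the structure sheaf and take an appropriate Fitting ideal of $f_*\calO_\calX$ (as signalled by the keywords) to produce a closed subscheme of $\PP^3_S$ flat over $S$ with Hilbert polynomial $3t+1$. This yields a map $\CM^{3t+1}_{\PP^3} \ra \Hilb_{\PP^3}$; the content is to check that on the open locus of honest twisted cubics $f$ is an isomorphism onto its scheme-theoretic image, so that the map lands in the closure $\HC$ of that locus. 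This defines $\phi$.

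Next I would establish smoothness and irreducibility of the source. Since the twisted cubics form a $12$-dimensional irreducible family and $\CM^{3t+1}_{\PP^3}$ is a proper compactification of it (by the cited representability results of H\o nsen and Heinrich), irreducibility follows once I know the twisted cubic locus is dense; density is where I would use that every CM-curve of this Hilbert polynomial degenerates from a smooth one, i.e.\ that the boundary has no separate components. Smoothness I would obtain by a deformation-theoretic computation: I would show the tangent space to the CM-functor at every closed point has dimension exactly $12$, presumably by identifying first-order deformations of the pair $(\calX, f)$ with a suitable $\Hom$ or $\mathrm{Ext}$ group and computing it is constant of rank $12$. Combined with properness and irreducibility this gives that $\CM^{3t+1}_{\PP^3}$ is smooth projective of dimension $12$, matching the known dimension of $\HC$ from Piene--Schlessinger.

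To promote $\phi$ to an isomorphism I would argue as follows. Both schemes are smooth (the smoothness of $\HC$ being the Piene--Schlessinger result) and irreducible of the same dimension $12$, and $\phi$ is the identity on the common dense open set of genuine twisted cubics, hence $\phi$ is birational and, being a morphism of proper schemes, surjective. It then suffices to check $\phi$ is injective on closed points and induces injections on tangent spaces, or equivalently that $\phi$ is unramified and birational between smooth proper varieties; Zariski's main theorem then forces $\phi$ to be an isomorphism. Concretely I would verify that over each point of $\HC$ there is a \emph{unique} CM-curve mapping to it — reconstructing the finite morphism $f\colon X \ra \PP^3$ from the subscheme $Z \in \HC$ by normalizing or by taking the Cohen-Macaulayfication — and that this reconstruction is functorial, giving a two-sided inverse on points.

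The main obstacle I expect is the boundary analysis: away from the twisted cubics the fibers of $\phi$ must be understood precisely, and one must show both that distinct CM-curves never collapse to the same Hilbert point (injectivity of $\phi$) and that the Cohen-Macaulay model over every degenerate member of $\HC$ exists and is unique (so $\phi$ is surjective with trivial fibers). For the planar-cubic degenerations of $\HC$ — which in the Piene--Schlessinger picture involve embedded or spatial points — recovering the correct finite source curve and matching the scheme structure via the Fitting-ideal construction is the delicate step, and controlling it uniformly in families, rather than merely pointwise, is where the real work lies.
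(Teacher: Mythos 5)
Your overall architecture coincides with the paper's: define $\phi$ by the zeroth Fitting ideal of $i_*\calO_X$, check flatness, prove smoothness by a tangent-space computation, get irreducibility from connectedness plus smoothness, and conclude via bijectivity and Zariski's Main Theorem between smooth proper $12$-folds. You also correctly locate the two hard points. But at both of those points the concrete step you propose either is missing or would fail.

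First, the flatness of the family cut out by $\Fitt^0(i_*\calO_X)$ is not formal: the paper proves it (Lemma \ref{Fittingflat}) by writing down an explicit $2\times 4$ presentation of $i_{R*}\calO_{X_R}$ over $\PP^3_R$ and checking that the four generators of the Fitting ideal add exactly one basis element $zw^{n-1}$ in each degree. The paper even exhibits a CM-family with Hilbert polynomial $5t-1$ whose Fitting-ideal family is \emph{not} flat (it has a $t$-torsion element $yz-tx^2$), so this step genuinely needs the special structure of the $3t+1$ case and cannot be waved through. That explicit presentation in turn rests on the identification of $\CM^{3t+1}_{\PP^2}$ with cubics equipped with a singular section, which your outline does not have.

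Second, your proposed reconstruction of the source curve over a boundary point of $\HC$ "by normalizing or by taking the Cohen-Macaulayfication" is wrong. A boundary point of $\HC$ is a singular plane cubic $D$ with an embedded point at a singular point $P$, and the CM-curve over it must satisfy $\operatorname{length}(i_*\calO_X/\calO_D)_P=1$ (Proposition \ref{not immersion}). The normalization has colength $\delta_P$, which exceeds $1$ as soon as the singularity is worse than a node or cusp (three concurrent lines, a multiple line), and the plane cubic is already Cohen--Macaulay, so neither operation produces the right object. The paper instead recovers $B=i_*\calO_{X,P}$ as the endomorphism ring $\Hom_A(\n,\n)$ of the maximal ideal $\n$ at $P$, using the de Jong--van Straten duality for fractional MCM modules (Proposition \ref{DucoTheo}); this is also what gives uniqueness of the fiber of $\phi$ and, via the Ring Condition, the control of the construction in flat families rather than merely pointwise. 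Without some substitute for this duality argument, both injectivity and surjectivity of $\phi$ over the boundary divisor remain unproved. A smaller discrepancy: the paper does not compute the tangent space at every point but uses semicontinuity to reduce to the single most degenerate curve (a triple line on a quadric cone with a specific projection) and computes the versal deformation there explicitly; that reduction requires knowing which curve is most special, which again comes out of the boundary analysis.
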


The map $\phi$ yielding the isomorphism is obtained by giving 
the image of the finite map $i\colon X  \ra \PP^3$ the scheme structure
determined by the $0^\mathrm{th}$ Fitting ideal $\Fitt^0(i_*{\calO}_X)$. This construction 
is discussed in detail in \cite{Teissier}. It commutes with base change, but the
resulting structure depends on the target of the morphism.
Another important ingredient in our story is an explicit
description of CM-curves in the plane. The space of CM-curves in $\PP^2$ having Hilbert polynomial $3t+1$ is identified with the space parametrising
families of plane cubic curves, together with a section that passes through singular points of the family. We use  a duality result by de~Jong and van Straten relating Cohen-Macaulay ring extensions with the endomorphism ring of dual modules \cite{DucoTheo}, which allows us to reconstruct the curve $X$ from its image and the
singular section.
For projective $n$-spaces with $n>3$ we 
identify a blow up of $\CM^{3t+1}_{\PP^n}$ with the twisted cubic component of $\Hilb_{\PP^n}$, based on the
description of the Hilbert space in \cite{ChungKiem}.

Other compactifications of the space of twisted cubics are known, see \emph{e.g.} \cite{EPS, vainsencherxavier, ChungKiem}. 
The connections between different compactifications
have been studied by \cite{ChungKiem} and by
\cite{Chen}, the latter starting from the Kontsevich
space.
Freiermuth and Trautmann \cite{FreiermuthTrautmann} studied (in characteristic 0) the moduli space of  stable sheaves on  $\PP^3$ supported on cubic curves
and found that it has two smooth components, one of which is isomorphic to the twisted cubic component $\HC$ of the Hilbert scheme, see Appendix \ref{app:FT}.
Their identification also uses Fitting ideals.
Our results are  valid in arbitrary characteristic; to this end we  extend the results of \cite{PieneSchlessinger}
to  also include  characteristic 2 and 3, see Appendix \ref{app:PS}.

In general one cannot expect that a component of the space of CM-curves is isomorphic to a 
component of the corresponding Hilbert scheme --- this already fails for $\CM^{3t+1}_{\PP^n}$ with $n>3$.
Therefore the situation with Hilbert polynomial $3t+1$ in $\PP^3$ appears to be quite special.
Only the case with Hilbert polynomial $2t+2$ is 
similar and can be studied analogously. 
In characteristic zero it is known (\cite{Harris, CCN}) that the Hilbert scheme $\operatorname{Hilb}^{2t+2}_{\PP^3}$ consists of two smooth components. With the arguments in this article one can show that $\CM^{2t+2}_{\PP^{3}}$ is isomorphic to the component whose generic point corresponds to a pair of skew lines. 
In both cases only the simplest type of non-isomorphism
occurs. This is no longer true for the case of quartic elliptic
space curves. The irreducible component $\HC$ of the Hilbert scheme 
$\operatorname{Hilb}^{4t}_{\PP^3}$  containing the points 
corresponding to these curves is explicitly known  by the work of
Avritzer and Vainsencher  \cite{AV}; the Hilbert scheme consists of 
two components \cite{Gotzmann2008, LellaRoggero}.
We expect that there exists a map
from the component $\HC$ to the space of
CM-curves.  
The picture is much more complicated for rational
normal curves of degree 4, with Hilbert polynomial $4t+1$.
We hope that the study of the space of CM-curves can 
help in the study of the Hilbert scheme.

\subsection*{Acknowledgments}

We are grateful for the comments of the anonymous referee that helped us to clarify certain arguments, and  to improve the presentation of the article.

\section{Cohen-Macaulay curves}
\label{section_CM}

In this section we 
define the space of CM-curves in general, 
and thereafter determine the dimension and prove smoothness  when the Hilbert polynomial is 
$3t+1$.

\subsection{The image of a finite map} The schematic image of a quasi-compact morphism of schemes $f\colon X\ra Y$ is the closed subscheme of $Y$ determined by the kernel of the induced map $\calO_Y \ra f_*{\calO}_X$. When $f\colon X\ra Y$ is a finite morphism the ideal defining the schematic image is also given by the annihilator ideal $\Ann_Y(f_*{\calO}_X)$.

\subsection{Fitting ideals}
The $n^\mathrm{th}$ Fitting ideal sheaf of a
coherent $\calO_X$-module $E$ is denoted by $\Fitt^n_X(E)$, or 
simply $\Fitt^n(E)$. We refer to \cite{Northcott} or \cite{Eisenbud} for definitions 
and their basic properties. We will be particularly interested in the $0^\mathrm{th}$ Fitting ideal, 
inspired by the discussion by Teissier in \cite{Teissier}. The ideal sheaf $\Fitt^0_X(E)$ is obtained from the maximal minors of the matrix in a presentation of the module $E$.

\subsection{CM-curves}\label{CM curve} We recall the notion of 
CM-curves as introduced by H\o nsen \cite{Honsen}. Let $\PP^n$ denote the
projective $n$-space over a fixed algebraically closed field $k$. A finite morphism 
$i\colon X\ra \PP^n$ is a
{\em CM-curve} if 
\begin{enumerate}
\item the scheme $X$ is Cohen-Macaulay and is of pure dimension one,
\item the morphism $i$ is, apart from a finite set, an isomorphism onto its 
schematic image.
\end{enumerate}
The Hilbert polynomial of a CM-curve $i\colon X\ra\PP^n$ is the Hilbert 
polynomial of the coherent sheaf $i_*{\calO}_X$ on $\PP^n$.

\subsection{Functor of Cohen-Macaulay curves}
Let $p(t)=at+b$ be a numerical polynomial.
For a scheme $S$ we let $\CM^{p(t)}_{\PP^n}(S)$ denote 
the set of isomorphism classes
of pairs $(X,i)$ where $i\colon X\ra \PP^n_S=\PP^n\times S$ is a finite morphism of 
$S$-schemes, and where $X\ra S$ is flat, and for every  geometric point in $S$ the fiber  
is a CM-curve with Hilbert polynomial $p(t)=at+b$.
Two pairs
$(X,i)$ and $(X',i')$ 
are isomorphic if there exists an $S$-isomorphism $\alpha \colon X\ra X'$ commuting with the 
maps $i$ and $i'$. 
We have that $\CM^{p(t)}_{\PP^n}$ 
is a contravariant functor from the category of locally Noetherian schemes, over our fixed field $k$, 
to Sets.

\begin{thm}[H\o nsen, Heinrich]\label{theorem-HH}
The functor $\CM^{p(t)}_{\PP^n}$ of CM-curves in $\PP^n$ having a fixed Hilbert 
polynomial $p(t)=at+b$, is representable by a proper algebraic space.
\end{thm}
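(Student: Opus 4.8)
The plan is to realize the functor $\CM^{p(t)}_{\PP^n}$ in terms of the single coherent sheaf of algebras $\mathcal{A} = i_*\calO_X$ and then to apply Artin's representability criteria, deducing properness from the valuative criterion. Since $i$ is finite, hence affine, the pair $(X,i)$ is recovered from $\mathcal{A}$ as its relative spectrum, and an isomorphism of pairs is precisely an isomorphism of $\calO_{\PP^n}$-algebras $\mathcal{A} \cong \mathcal{A}'$. Thus $\CM^{p(t)}_{\PP^n}(S)$ is equivalent to the set of isomorphism classes of sheaves of $\calO_{\PP^n_S}$-algebras $\mathcal{A}$, finite and flat over $S$ with Hilbert polynomial $p(t)$, whose geometric fibres define CM-curves. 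The first point I would record is that such objects carry no nontrivial automorphisms: if $\alpha$ fixes $i$, then $\alpha$ is the identity on the dense open where $i$ is an immersion, and since a CM-curve of pure dimension one has no embedded points, the equalizer of $\alpha$ and $\mathrm{id}$ --- a closed subscheme containing every associated point of $X$ --- must be all of $X$. This rigidity is exactly what lets us aim for an algebraic space rather than a stack, and it makes the functor a sheaf.

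Next I would check the soft hypotheses of Artin's theorem. The functor is a sheaf for the \'etale (indeed fppf) topology by faithfully flat descent for coherent sheaves together with their algebra structure and the relative-spectrum construction. It is locally of finite presentation because all the data --- the module $\mathcal{A}$, the unit and multiplication maps, flatness, and the fibrewise conditions --- are finitely presented and compatible with filtered colimits of rings, the CM and generic-immersion conditions being constructible and hence stable under such limits. Schlessinger's homogeneity conditions hold since deformations of a finite flat algebra over a fixed base glue over fibre products of Artinian rings. The deformation and obstruction theory of the pair is governed by the $\mathrm{Ext}$-groups on $\PP^n$ of the relative cotangent complex $L_{X/\PP^n}$ (equivalently by the $T^1$ and $T^2$ of $\mathcal{A}$ as an $\calO_{\PP^n}$-algebra); these are coherent functors and finite-dimensional because $X$ is proper of dimension one, which gives the finiteness needed for a hull and, through Artin's coherence criterion, the openness of versality.

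The effectivity of formal deformations is where projectivity of the target enters decisively. Given a formal family over a complete local Noetherian ring $\hat{A}$, the compatible system of algebra sheaves on the $\PP^n_{A_m}$ is a formal coherent sheaf on the proper scheme $\PP^n_{\hat A}$, so Grothendieck's existence theorem algebraizes it to an honest coherent sheaf, and the unit and multiplication, being morphisms of coherent sheaves, algebraize as well; it then remains to observe that flatness and the CM condition on the closed fibre propagate to the algebraization. With effectivity in place, Artin's theorem yields that $\CM^{p(t)}_{\PP^n}$ is an algebraic space locally of finite type over $k$.

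Finally I would establish properness. Quasi-compactness, hence finite type, follows from boundedness: all the sheaves $\mathcal{A}$ share the fixed Hilbert polynomial $p(t)$ and are quotients of a single bounded family, so they are parametrized by a quasi-compact locus. Separatedness and the existence of limits I would obtain from the valuative criterion over a discrete valuation ring $R$ with fraction field $K$: given a CM-curve $X_K \to \PP^n_K$, one produces a coherent sheaf of algebras on $\PP^n_R$, flat over $R$ and restricting to $i_*\calO_{X_K}$ over the generic point, and the decisive task is to arrange that its special fibre is Cohen-Macaulay of pure dimension one with the map still generically an immersion; existence of this extension gives universal closedness. Uniqueness of the limit, giving separatedness, again rests on the fact that a pure one-dimensional CM-curve has no embedded points, so the flat limit is forced. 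I expect this last step --- controlling the special fibre of the degeneration and showing that the CM and generic-immersion conditions survive it --- to be the main technical obstacle; it is precisely the Cohen-Macaulay hypothesis that makes the compactification proper, where the naive space of immersed curves would not be.
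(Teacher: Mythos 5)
The paper does not prove this theorem; it is quoted from \cite{Honsen} and \cite{Heinrichrep}, so there is no internal proof to compare your argument against. Your outline follows essentially H\o nsen's route: encode the pair $(X,i)$ as the finite $\calO_{\PP^n}$-algebra $i_*\calO_X$ (legitimate, since $i$ is affine and $X$ is recovered as the relative spectrum), observe that such pairs are rigid because a pure one-dimensional Cohen--Macaulay curve has no embedded points, verify Artin's axioms (\'etale descent, limit preservation, a deformation--obstruction theory governed by the cotangent complex of $X$ over $\PP^n$, effectivity via Grothendieck's existence theorem on the proper scheme $\PP^n_{\hat A}$), and finish with boundedness and the valuative criterion. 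All of these reductions are correctly identified and the rigidity argument is sound.

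As written, however, this is a strategy rather than a proof: the two steps that carry all the difficulty are only named. For universal closedness you must, given $X_K\to\PP^n_K$ over the fraction field of a DVR $R$, actually produce a finite $R$-flat algebra extension whose special fibre is Cohen--Macaulay of pure dimension one with the map still generically an immersion. The naive coherent extension of $i_{K*}\calO_{X_K}$ over $\PP^n_R$ is $R$-flat (torsion-free over a DVR), but its special fibre can acquire embedded or zero-dimensional components; one has to replace it by a CM-ification, check that the result is still a sheaf of algebras, finite over $\PP^n_R$, with the same Hilbert polynomial $p(t)$ on the special fibre, and that it is unique --- uniqueness being what yields separatedness. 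Similarly, boundedness of the family of sheaves $i_*\calO_X$ with fixed Hilbert polynomial $at+b$ requires a uniform Castelnuovo--Mumford regularity bound, not merely the assertion that they ``are quotients of a single bounded family.'' You correctly flag the valuative criterion as the main technical obstacle, but flagging it does not discharge it; these are exactly the points where the cited proofs of H\o nsen and Heinrich do their work.
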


Theorem \ref{theorem-HH} is proven  more generally in \cite{Honsen} for schemes defined over 
a fixed field  of arbitrary characteristic,
not necessarily algebraically closed. A different construction and proof is given in \cite{Heinrichrep} where  it is shown that
the functor is representable by a proper algebraic space of finite type over $\Spec \ZZ$.

\subsection{Tangent space to $\CM$}\label{tangentspace}
With $\PP^n$ over a field $k$ and the polynomial $p(t)$ fixed, 
we write $\CM(R)$ for $\CM^{p(t)}_{\PP^n}(\Spec(R))$ for any $k$-algebra $R$.
Given a pair $\xi=(X,i)$ in $\CM(k)$ we 
define the  functor $\CM_\xi$ \cite[\S3.4]{Honsen} on the category of local artinian $k$-algebras by
\[
\CM_{\xi}(R)=\{(X_R,i_R)\in \CM(R) \mid (X_R,i_R)\text{ maps to }\xi\in \CM(k) \}\;.
\]
This is in fact a deformation functor. 

For a morphism $i\colon X \ra Y$ Buchweitz
\cite{buchweitz} considers six deformation functors, one of them being $\Def_{X/Y}$,
the functor of deformations of $X$ \textit{over}
$Y$,
where one deforms $X$ and the morphism $i$,
but not the target $Y$. 
Its
tangent space is $T^1_{X/Y}=\mathbb{E}\text{xt}^1_{\calO_X}(\mathbb{L}^{\bullet}_{X/Y},\calO_X)$, where 
$\mathbb{L}^{\bullet}_{X/Y}$ is the 
cotangent complex \cite{Illusie}. 

The functor
$\CM_{\xi}$ is the functor $\Def_{X/\PP^n}$ of deformations of $X$ over $\PP^n$,
the completion of the local ring of the point $(X,i) \in \mathrm{CM}(k)$ is the versal deformation
of $X$ over $\PP^n$ and the tangent space is 
$\CM_\xi(k[\eps])$, where $\eps^2=0$. In our specific situation we shall compute this tangent space by directly describing infinitesimal deformations of $(X,i)$.

\subsection{Conductor}\label{conductor}Let $D=i(X)$ denote the schematic image of a  
CM-curve $i\colon X\ra \PP^n$. We have the short exact sequence  
\begin{equation}\label{exact-sequence}
0\ra \calO_D \ra i_*{\calO}_X \ra \calK=i_*{\calO}_X/\calO_D \ra 0    
\end{equation}
of sheaves on $D$. The {\it target double point 
scheme} \cite[p.204]{KLU} of $i\colon X\ra \PP^n$ is the closed subscheme $N_X\subseteq D$ 
defined by the ideal sheaf
\[
\label{N_X}\Ann_D(\calK)=\Ann_D(i_*{\calO}_X/\calO_D).
\]
This ideal sheaf equals the conductor of $\calO_D$ in   $i_*\calO_X$. 
By definition the underlying set $|N_X|$ of points in $D$ is a finite set, and the open set $D\setminus N_X$ is isomorphic to $X\setminus i^{-1}(N_X)$.

\subsection{Twisted cubics}  The easiest example of a space of CM-curves is that
of plane curves of degree $d$, which is a space that coincides with the Hilbert scheme. One of the easiest interesting, non-planar examples is the case of CM-curves with Hilbert polynomial $3t+1$. In what follows we will assume that $\PP^n$ is the projective $n$-space defined over an algebraically closed field.
 
\begin{prop}\label{not immersion} Let $i\colon X\ra \PP^n$ 
be a CM-curve where 
$i$ is not a closed immersion. Assume that the Hilbert polynomial of $i_*{\calO}_X$ 
is $3t+1$. Then the image $D=i(X)$ is a plane cubic curve, 
and there is exactly one point $P\in D$ over which the morphism $i$ fails to be an isomorphism. 
\end{prop}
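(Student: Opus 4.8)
The plan is to first extract the numerical data of the image from the conductor sequence \eqref{exact-sequence}, and then to pin down $D$ geometrically. Since $i$ is an isomorphism onto its image away from the finite set $N_X$ but is not a closed immersion, the quotient $\calK=i_*\calO_X/\calO_D$ is a nonzero sheaf of finite length $\ell\ge 1$, supported precisely on $N_X$. As $X$ is Cohen--Macaulay of pure dimension one and $i$ is finite, the sheaf $i_*\calO_X$ is pure of dimension one; hence its subsheaf $\calO_D$ is pure as well, so $D$ has no embedded or isolated points and is itself a Cohen--Macaulay curve. Twisting \eqref{exact-sequence} and comparing Euler characteristics gives $\chi(\calO_D(t))=(3t+1)-\ell$, so that $D$ has degree $3$ and arithmetic genus $p_a(D)=1-\chi(\calO_D)=\ell\ge 1$. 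I record that, once $D$ is shown to lie in a plane, the identity $\ell=p_a(D)=1$ will follow automatically from the genus formula for plane curves.

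The heart of the argument is therefore to prove that $D$ is planar. I would study the linear span $\PP^m\subseteq\PP^n$ of $D$ through a general hyperplane section $\Gamma=D\cap H$, a subscheme of length $3$. Since the general hyperplane section of a non-degenerate scheme is non-degenerate in $H$, the scheme $\Gamma$ spans $\PP^{m-1}$; but a length-$3$ scheme spans at most a $\PP^2$, so $m\le 3$. It remains to exclude the case $m=3$, and here the positivity $p_a(D)\ge1$ is decisive: a degree-$3$ Cohen--Macaulay curve spanning $\PP^3$ is a curve of minimal degree, and I would show it has arithmetic genus at most $0$. Concretely, when $m=3$ the section $\Gamma$ is a length-$3$ scheme spanning $H\cong\PP^2$, so it imposes independent conditions on forms of every positive degree; lifting this through the sequences $0\to\calO_D(t-1)\to\calO_D(t)\to\calO_\Gamma(t)\to0$ yields the projective normality of $D$ and hence $h^1(\calO_D)=0$, whence $p_a(D)=1-h^0(\calO_D)\le0$, a contradiction. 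Thus $m=2$ and $D$ is a plane cubic. An attractive feature of this hyperplane-section bookkeeping is that it treats reduced, reducible and non-reduced images on the same footing, which matters because $D$ could a priori be a non-reduced (ribbon-type) cubic.

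With $D$ a plane cubic we have $p_a(D)=\binom{2}{2}=1$, so $\ell=1$; hence $\calK$ has length one and its support $N_X$ consists of a single point $P$. By the description of the conductor in \ref{conductor}, $i$ restricts to an isomorphism over $D\setminus\{P\}$, while at $P$ the stalk $\calK_P$ is nonzero, so $i$ fails to be an isomorphism there; this is the unique such point. The step I expect to be the main obstacle is precisely the exclusion of $m=3$: proving that every degree-$3$ curve spanning $\PP^3$ has arithmetic genus at most $0$, i.e.\ establishing the requisite projective normality uniformly across the twisted cubic, its reducible degenerations, and the non-reduced multiplicity structures that can occur as images.
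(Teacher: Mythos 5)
Your opening reduction coincides with the paper's: from the conductor sequence one gets $\chi(\calO_D(t))=3t+1-\ell$ with $\ell\ge1$, hence $\deg D=3$ and $p_a(D)=\ell\ge1$, and the whole proposition reduces to showing that a degree-$3$ locally Cohen--Macaulay curve with $p_a\ge1$ is a plane cubic. The paper settles this in one stroke by citing Hartshorne's bound $p_a\le\frac12(d-1)(d-2)$, with equality if and only if $D$ is planar, remarking that the proof extends from $\PP^3$ to $\PP^n$. You instead try to reprove the needed special case by general hyperplane sections, and that is where the gap sits. The assertion you lean on twice --- ``the general hyperplane section of a non-degenerate scheme is non-degenerate in $H$'' --- is false at the level of generality at which you invoke it: two skew lines in $\PP^3$ form a non-degenerate degree-$2$ curve whose general hyperplane section is two points spanning only a line. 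Without it you get neither the bound $m\le3$ nor, in the case $m=3$, the surjectivity of $H^0(\calO_D(1))\ra H^0(\calO_\Gamma(1))$, which is the only degree where independence of conditions is actually in doubt (any length-$3$ subscheme of $\PP^2$ imposes independent conditions on forms of degree $\ge2$, so your ``projective normality'' step is routine once $t=1$ is handled).

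The claim can only hope to be true for connected $D$, and you never establish connectedness; it does follow from $p_a(D)\ge1$, since each connected component has degree $\le2$ and hence $p_a\le0$, giving $p_a(D)=\sum p_a(D_i)-(r-1)\le-(r-1)$ --- but note that this already uses the degree-$2$ case of the very genus bound you are trying to avoid. Even granting connectedness, the statement that the general hyperplane section of a connected, possibly reducible and non-reduced, degree-$3$ curve spanning $\PP^3$ spans the cut $\PP^2$ is not a quotable black box: for integral curves it is the classical general position lemma, but extending it uniformly to the multiple structures on lines and the reducible configurations that can occur here is essentially the content of Hartshorne's theorem (the ``equality iff planar'' clause in particular). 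So you have correctly isolated the $m=3$ exclusion as the crux, but you have located the difficulty in the wrong place: the obstacle is not the lifting argument through $0\ra\calO_D(t-1)\ra\calO_D(t)\ra\calO_\Gamma(t)\ra0$, it is the unproved non-degeneracy of $\Gamma$. As it stands the proposal replaces one citation by an unproved claim of comparable depth, so it does not yet constitute a proof.
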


\begin{proof} The Hilbert polynomial of the skyscraper sheaf $\calK=i_*{\calO}_X/
\calO_D$ is a positive constant $\ell> 0$; it is non-zero since by assumption the map $i$ is not a closed immersion. The Hilbert polynomial of $D$ is then $p(t)=3t+1-\ell. $
Hartshorne proved \cite[Theorem 3.1]{space_curves} that the  arithmetic genus $p_a=1-p(0)$ of 
the space curve $D\subseteq \PP^3$ is bounded above by $p_a\leq \frac12 (d-1)(d-2)$, where $d$ 
is the degree of $D$, with equality if and only if $D$ is a plane curve. The same proof applies for $D\subset \PP^n$, where $n\geq 3$ is arbitrary.
It follows that $\ell =1$.

The 
support of $\calK$ is then one point on $D$. The equality $p_a=\frac12 (d-1)(d-2)$ implies that the curve $D$ is planar.
\end{proof}

\begin{lemma}\label{embedded}
 Let $i\colon X \ra \PP^n$ be a CM-curve with Hilbert polynomial $3t+1$. Then $H^1(X,i^*\calO_{\PP^n}(1))=0$
 and $\dim_k H^0(X,i^*\calO_{\PP^n}(1))=4$. 
\end{lemma}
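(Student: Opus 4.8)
The plan is to transport the whole computation to cohomology on $\PP^n$ and then reduce it to the single sheaf $\calO_X$. Since $i$ is finite it is affine, so $H^j(X,i^*\calO_{\PP^n}(1))=H^j(\PP^n,\calF(1))$ where $\calF:=i_*\calO_X$, using the projection formula $\calF\otimes\calO_{\PP^n}(1)=i_*(i^*\calO_{\PP^n}(1))$. Because $\calF$ is supported on the one-dimensional scheme $D=i(X)$, the groups $H^j(\PP^n,\calF(1))$ vanish for $j\ge 2$; and evaluating the Hilbert polynomial $3t+1$ at $t=1$ gives $\chi(\PP^n,\calF(1))=4$. Hence the entire statement collapses to the one vanishing $H^1(\PP^n,\calF(1))=0$, after which $\dim_k H^0=4$ falls out of the Euler characteristic.

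For this vanishing I would split along the two cases of Proposition \ref{not immersion}. If $i$ is \emph{not} a closed immersion, then $D$ is a plane cubic inside some $\PP^2\subset\PP^n$, and the conductor sequence \eqref{exact-sequence} reads $0\to\calO_D\to\calF\to\calK\to 0$ with $\calK$ of length $\ell=1$. Twisting by $\calO(1)$ and using that the skyscraper $\calK$ is unchanged by the twist and has $H^1(\calK)=0$, it suffices to prove $H^1(D,\calO_D(1))=0$. This is immediate from the Koszul resolution $0\to\calO_{\PP^2}(-2)\to\calO_{\PP^2}(1)\to\calO_D(1)\to 0$ of the degree-$3$ hypersurface $D\subset\PP^2$, since $H^1(\calO_{\PP^2}(1))=H^2(\calO_{\PP^2}(-2))=0$; note this needs no reducedness of $D$.

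When $i$ is a closed immersion we have $\calF=\calO_X$ with $X$ of degree $3$ and, by the genus bound used in Proposition \ref{not immersion}, non-planar. Here I would cut with a general hyperplane $\ell$, which is a non-zero-divisor on $\calO_X$ because $X$ is Cohen--Macaulay of pure dimension one, obtaining $0\to\calO_X\xrightarrow{\ \ell\ }\calO_X(1)\to\calO_{X\cap H}\to 0$ with $X\cap H$ finite. Thus $H^1(\calO_X(1))$ is a quotient of $H^1(X,\calO_X)$, and I am reduced to proving $H^1(X,\calO_X)=0$. Since $\chi(\calO_X)=1$ this means $h^0(\calO_X)=1$, i.e.\ connectedness of $X$ together with the absence of nilpotent global functions. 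Connectedness I would extract from additivity of $\chi$: a splitting into clopen pieces of degrees $\{2,1\}$ or $\{1,1,1\}$ forces $\chi(\calO_X)\ge 2$, since a connected Cohen--Macaulay curve of degree $\le 2$ (a line, a conic, two meeting lines, or a Cohen--Macaulay ribbon) always has $\chi(\calO)\ge 1$; this contradicts $\chi(\calO_X)=1$, so $X$ is connected.

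The hard part is the last point, $h^0(\calO_X)=1$ in the embedded case, because connectedness by itself does not rule out a non-reduced $X$ with a nilpotent global section. I expect to settle it by Serre duality on the Cohen--Macaulay curve $X$, which identifies $H^1(\calO_X(1))$ with $\Hom_{\calO_X}(\calO_X(1),\omega_X)=H^0(X,\omega_X\otimes\calO_X(-1))$: as $\deg\calO_X(1)=3$ while $\deg\omega_X=2p_a-2=-2$, a nonzero homomorphism $\calO_X(1)\to\omega_X$ is impossible once $X$ is integral, and the reducible or non-reduced cases must be handled by restricting to the components of $X$ and bounding the degree of $\omega_X$ on each. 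An alternative I would keep in reserve is to show directly that such a curve is $2$-regular (its ideal generated by quadrics, as for the twisted cubic), which yields $H^1(\calO_X(1))=0$ at once; this is the point at which the determinantal structure underlying $\HC$ would come into play.
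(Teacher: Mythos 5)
Your reduction to $H^1(\PP^n,i_*\calO_X(1))=0$ via affineness of $i$ and the Euler characteristic is correct, and your treatment of the non-immersion case (conductor sequence plus the Koszul resolution of the plane cubic) is essentially the paper's argument. The connectedness count via additivity of $\chi$ is also sound. But in the closed-immersion case your proof has a genuine gap, and you flag it yourself: after Serre duality you need $H^0(X,\omega_X(-1))=0$, the degree count $\deg\omega_X(-1)=-5<0$ only works for integral $X$, and the reducible and non-reduced curves are exactly the ones that matter on the boundary of the compactification (e.g.\ multiple structures on lines and on conics, where $\omega_X$ need not be a line bundle and its ``degree on a component'' is not controlled by $p_a(X)$ in any naive way). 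Neither of your two fallback strategies is carried out, and the second one ($2$-regularity via the determinantal structure of curves on $\HC$) would be circular at this point of the paper, since the classification of such curves is part of what is being established.

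The missing ingredient, which is what the paper uses, is E.~Schlessinger's bound on the index of speciality: for a locally Cohen--Macaulay (quasi-ACM) curve $X\subseteq\PP^n$ of pure dimension one, the largest $e$ with $H^0(X,\omega_X(-e))\neq 0$ satisfies $-2\le e(X)\le\deg(X)-3$ \cite[Cor.~3.4]{Schl}. For $\deg X=3$ this gives $e(X)\le 0$, hence $H^0(X,\omega_X(-1))=0$ and so $H^1(X,\calO_X(1))=0$ by duality, with no integrality or irreducibility hypothesis and no case analysis. If you do not want to invoke that result, you must supply an argument covering all non-reduced and reducible degree-$3$ locally CM curves, which is precisely the part your proposal defers.
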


\begin{proof} 
Assume first that $i$ is a closed immersion.
Define the index of speciality $e(X)$ of a  Cohen-Macaulay curve $X\subset\PP^n$ 
as the largest integer $e$ such that $H^0(X,\omega_X(-e))\neq0$,  where $\omega_X$ is the dualizing sheaf. E.~Schlesinger  proves that 
$-2\leq e(X)\leq \deg(X)-3$ \cite[Corollary~3.4]{Schl}. This corollary applies as a (locally)
Cohen-Macaulay curve is a quasi ACM subscheme of dimension 1  \cite[Corollary~2.7]{Schl}.
In our case $e(X)\leq 0$ and therefore 
$0=H^0(X,\omega_X(-1))=H^1(X,\calO_X(1))$, where $\calO_X(1)=i^*\calO_{\PP^n}(1)$. As the Hilbert polynomial is $3t+1$ we then also have that
$\dim_k H^0(X,\calO_X(1))=4$.

If the morphism $i$ is not a closed immersion then by Proposition \ref{not immersion} the image of $X$ is a planar cubic $D$. Tensoring the exact sequence \eqref{exact-sequence} with $\calO_{\PP^n}(1)$ gives the exact sequence
\[ 
0\ra \calO_D(1) \ra i_*{\calO}_X(1) \ra \calK \ra 0
\]
of sheaves on $\PP^n$. We have that $\dim_k H^0(D,\calO_D(1))=3$ and that $H^1(D,\calO_D(1))=0$ as $D$ is a planar cubic. It then follows from the exact sequence that $H^1(\PP^n, i_*{\calO}_X(1))=0$
and that $\dim_k H^0(\PP^n,i_*{\calO}_X(1))=4$.
The projection formula gives that $i_*\calO_X(1)=i_*(i^*\calO_{\PP^n}(1))$, and since the morphism $i$ is finite the result follows. 
\end{proof}

\begin{prop}
\label{regular}Let $i_R\colon X_R \ra \PP^n_R$ be a CM-curve with Hilbert polynomial $3t+1$.  Assume that $R$ is a local ring.
\begin{enumerate}
    \item We have that $H^0(X_R, i_R^*\calO_{\PP^n_R}(1))$ is a free $R$-module of rank 4.
    \item Let us fix a basis of $H^0(X_R, i_R^*\calO_{\PP^n_R}(1))$ and let $j_R\colon X_R \ra \PP^3_R=\Proj(R[y_0, y_1, y_2, y_3])$ be the induced morphism. Then $j_R$  is a closed immersion.
    \item There is a finite morphism  $h_R \colon j_R(X_R) \ra \PP^n_R$ induced by a linear map $R[x_0, \ldots, x_n] \ra R[y_0, \ldots, y_3]$, such that $i_R=h_R\circ j_R$. 
    \item The pairs $(X_R,i_R)$ and $(j_R(X_R),h_R)$ are equal as points of the CM-functor. 
\end{enumerate}
\end{prop}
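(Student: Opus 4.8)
The plan is to prove (1) by cohomology and base change, to deduce (2) from a fibrewise very-ampleness statement, and then to obtain (3) and (4) by elementary linear algebra over $R$ together with the fact that $j_R$ is an isomorphism onto its image. The substance of the argument lies in (2).

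First I would prove (1). Write $\pi\colon X_R\ra\Spec R$ for the structure morphism and set $\calL=i_R^*\calO_{\PP^n_R}(1)$. Since $X_R\ra\Spec R$ is flat and $i_R$ is finite, $X_R$ is proper and flat over $R$ and $\calL$ is flat over $R$. Over every geometric point $s$ of $\Spec R$ the fibre is a CM-curve with Hilbert polynomial $3t+1$, so Lemma \ref{embedded} gives $H^1(X_s,\calL_s)=0$ and $\dim H^0(X_s,\calL_s)=4$. As $X_R$ has relative dimension one, the vanishing of the top cohomology on all fibres forces $R^1\pi_*\calL=0$; then the base-change map for $\pi_*\calL$ is an isomorphism on every fibre, so $\pi_*\calL$ is locally free of constant rank $4$ and commutes with base change. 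Over the local ring $R$ it is free of rank $4$, which is (1), and a chosen basis $s_0,\dots,s_3$ restricts to a basis of $H^0(X_s,\calL_s)$ on each fibre.

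For (2), the fibrewise statement just proved shows that the morphism $j_s$ cut out by $s_0,\dots,s_3$ on a fibre is the map given by the complete linear system $|\calL_s|$. I would invoke the standard criterion that a morphism from a flat and proper $R$-scheme to $\PP^3_R$ is a closed immersion as soon as it is one on every geometric fibre; the same fibrewise input also yields global generation of $\calL$, hence the very existence of $j_R$, by Nakayama. It then remains to show that $|\calL_s|$ embeds $X_s$, and here I separate the two cases of Proposition \ref{not immersion}. If $i_s$ is a closed immersion, then $\calL_s=\calO_{X_s}(1)$ is very ample and the linear subsystem $V_s\subseteq H^0(X_s,\calL_s)$ spanned by $i_s^*x_0,\dots,i_s^*x_n$ defines $i_s$ itself, a closed immersion; since $i_s$ factors through the morphism given by $|\calL_s|$ followed by the linear projection attached to $V_s\subseteq H^0(X_s,\calL_s)$, that complete-system morphism is a closed immersion too. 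If $i_s$ is not a closed immersion, then by Proposition \ref{not immersion} its image is an integral plane cubic $D$ and $i_s\colon X_s\ra D$ is finite and birational, so $X_s$ is itself integral of arithmetic genus $0$ and $\deg\calL_s=3$. For a length-two subscheme $Z\subset X_s$ I would check $H^1(\calL_s\otimes I_Z)=0$ by Serre duality on the CM-curve $X_s$: a nonzero map $\calL_s\otimes I_Z\ra\omega_{X_s}$ of torsion-free rank-one sheaves on the integral curve $X_s$ would be injective and so force $1=\deg(\calL_s\otimes I_Z)\leq\deg\omega_{X_s}=-2$, which is absurd. This yields base-point-freeness and separation of points and tangents, hence that $|\calL_s|$ is a closed immersion. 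I expect this non-immersion fibre to be the main obstacle, since it relies on Serre duality for singular CM-curves and on the integrality of $X_s$, neither of which is available in the reducible immersed case.

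Finally, (3) and (4) are formal. Each $i_R^*x_j$ lies in $H^0(X_R,\calL)=\bigoplus_i Rs_i$ by (1), so $i_R^*x_j=\sum_i a_{ji}s_i$ with $a_{ji}\in R$, and $x_j\mapsto\sum_i a_{ji}y_i$ is the required linear map $R[x_0,\dots,x_n]\ra R[y_0,\dots,y_3]$. Because $i_R$ is everywhere defined, the forms $\sum_i a_{ji}y_i$ have no common zero on $j_R(X_R)$, so they define a morphism $h_R\colon j_R(X_R)\ra\PP^n_R$ with $h_R\circ j_R=i_R$; since $i_R$ is finite and $j_R$ is an isomorphism onto $j_R(X_R)$, the map $h_R$ is finite. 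For (4), the closed immersion $j_R$ is an isomorphism $\alpha\colon X_R\xrightarrow{\ \sim\ }j_R(X_R)$ with $h_R\circ\alpha=i_R$, and $j_R(X_R)$ inherits from $X_R$ all the properties of a CM-curve flat over $R$ with $h_R$ finite and generically an isomorphism; hence $(X_R,i_R)$ and $(j_R(X_R),h_R)$ coincide as points of the CM-functor.
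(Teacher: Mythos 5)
Parts (1), (3) and (4) of your argument are essentially the paper's: cohomology and base change for the freeness and rank of $H^0(X_R,i_R^*\calO_{\PP^n_R}(1))$, the linear map $x_j\mapsto\sum_i a_{ji}y_i$ for $h_R$, and the formal identification of the two pairs. The reduction of (2) to the closed fibre via Nakayama is also fine.

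The gap is in your treatment of the non-immersion fibre in (2). Proposition \ref{not immersion} says only that the image $D=i_s(X_s)$ is a plane cubic; it does \emph{not} say that $D$, or $X_s$, is integral. The fibres on which $i_s$ fails to be a closed immersion include precisely the most degenerate curves of the paper: for the curve of Lemma \ref{degenerate}, $X_s$ is the non-reduced triple structure $(u^2,uy-x^2,xu)$ on a line and its image is the triple line $x^3=0$; the remark after Definition \ref{simple} gives the example of three concurrent lines mapping to three concurrent lines in the plane. In all of these cases your Serre-duality step collapses: the claim that a nonzero map $\calL_s\otimes I_Z\ra\omega_{X_s}$ of ``torsion-free rank-one sheaves'' is injective, and the ensuing degree inequality $1\le\deg\omega_{X_s}=-2$, both require $X_s$ to be integral (and you have in fact flagged the wrong case as problematic --- the reducible \emph{immersed} fibres are handled by your very-ampleness argument; it is the reducible or non-reduced \emph{non-immersed} fibres that your Case B does not cover). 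As written, the proof of (2) is therefore incomplete exactly at the points of $\CM^{3t+1}_{\PP^n}$ where the statement is most needed.

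The paper avoids the case analysis entirely with a short self-referential argument that you could adopt: since $i$ factors as a linear projection composed with $j$, the map $j\colon X\ra\PP^3$ is again finite and generically an isomorphism onto its image, i.e.\ $(X,j)$ is itself a CM-curve with Hilbert polynomial $3t+1$. If $j$ were not a closed immersion, Proposition \ref{not immersion} applied to $(X,j)$ would force its image into a plane of $\PP^3$, giving a nontrivial linear relation among $s_0,\dots,s_3$ and contradicting that these four sections are a basis of the $4$-dimensional space $H^0(X,\calL)$. This works uniformly for integral, reducible and non-reduced fibres and requires no duality theory.
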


\begin{proof}
Let $i\colon X \ra \PP^3$ be the curve over the closed point of $\Spec(R)$. By Lemma \ref{embedded} we have 
that $H^r(X,i^*\calO_{\PP^n}(1))=0$ for $r>0$. Let $f_R \colon X_R \ra \Spec (R)$ be the structure map. It follows 
by cohomology and base change \cite[Corollaire~7.9.9]{EGA} that
${f_R}_*i_R^*\calO_{\PP^n_R}(1)$ is free, and therefore by Lemma \ref{embedded} free of rank 4.  

A choice of basis $(s_0,s_1,s_2,s_3)$ of global sections of $j_R^*\calO(1)$ gives a morphism
$$j_R\colon X_R \ra \PP^3_R=\Proj(R[y_0, y_1, y_2, y_3])$$
such that  $s_i=j_R^*(y_i)$, $i=0, \ldots, 3$. 
The coordinates
$(x_0:\dots:x_n)$ on $\PP^n_R$ give rise to global sections $i_R^*(x_i)$ of $i_R^*\calO_{\PP^n_R}(1)$, which can be expressed
linearly in the basis $(s_0,s_1,s_2,s_3)$. This defines the map $h_R$. 

To show that the finite morphism $j_R$ is a closed immersion it suffices to show that the induced map $j\colon X \ra \PP^3$ over the closed point of $\Spec(R)$ is a closed immersion. If $j$ is not a closed immersion then we have by Proposition \ref{not immersion} that the image is contained in a plane in $\PP^3$. This contradicts the fact that the global sections $s_0=j^*(y_0)$, \dots, $s_3=j^*(y_3)$ are linearly independent.
It follows from the construction that the pairs $(j_R(X_R),h_R)$ and $(X_R,i_R)$ are isomorphic.
\end{proof}

\subsection{Tangent space calculations}The purpose of the rest of the present section is to prove that the CM-spaces with Hilbert polynomial $3t+1$ are smooth. Smoothness is proven by showing that the dimension of the tangent space at the most special point equals the dimension of a general, smooth point. 
Later we will see that the most special situation is a projection of the curve
$X \subset \PP^3$ 
given by the  graded ideal $I=(u^2, uy-x^2,xu) \subset k[x,y,u,w]$.
Thus, for the moment we focus on this particular ideal. The canonical  morphism $k[x,y,w] \ra k[x,y,u,w]/I$ of graded rings determines the finite morphism 
\begin{equation}\label{map_to_P2}
i_1 \colon X\ra \PP^2 =\Proj(k[x,y,w]).
\end{equation}
As such we have a CM-curve in $\PP^2$ with Hilbert polynomial $3t+1$.  
We will also view $\PP^2$ as a closed subscheme in $\PP^3$ given by the 
homogeneous ideal $(z)\subset k[x,y,z,w]$. The finite morphism 
\eqref{map_to_P2} composed with the closed immersion gives a finite morphism
\begin{equation}\label{map_to_P3}
i \colon X\ra \PP^3,
\end{equation}
and thus a CM-curve in $\PP^3$.

\begin{lemma}\label{degenerate} 
Let $X\subseteq \PP^3$ be the closed subscheme 
given by the graded ideal 
$I=(u^2, uy-x^2,xu)\subseteq k[x,y,u,w]$, and let $i\colon X\ra \PP^3$ be 
the finite morphism \eqref{map_to_P3}. The completion  of the local ring of the point  
$(X,i)$ in $\CM^{3t+1}_{\PP^3}$ is the power series ring
$R=k[[a_1, \ldots, a_8, b_9, \ldots , b_{12}]]$. The universal family over 
$\Spec(R)$ is given by the pair 
$(X_R, i_R)$, where $X_R$ is given by the 
homogeneous ideal $J\subseteq R[x,y,u,w]$ generated by the maximal minors of
\begin{equation}
\label{u_matrix}
\begin{pmatrix}
 x + a_2 w& a_7 y + a_6 w&  u  \\
 y +a_1w& u + a_5 x+ a_4 y + a_3 w & x +a_8 w
\end{pmatrix},
\end{equation}
and the morphism $i_R\colon X_R\ra \PP^3_R=\Proj (R[x,y,z,w])$ is determined by the linear map
$$i_R^*(x,y,z,w)=(x,y,b_9x+b_{10}y+b_{11}w+b_{12}u,w).$$
\end{lemma}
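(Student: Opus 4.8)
The plan is to realise the functor $\CM_\xi=\Def_{X/\PP^3}$ as in \ref{tangentspace}, compute its tangent space directly, and then recognise the family \eqref{u_matrix} as a versal — in fact universal — deformation over a smooth base. Let $(X_R,i_R)$ be a deformation of $(X,i)$ over a local Artinian $k$-algebra $R$. By Proposition \ref{regular}, $H^0(X_R,i_R^*\calO_{\PP^3_R}(1))$ is free of rank four and a choice of basis gives a closed immersion $j_R\colon X_R\hookrightarrow\PP^3_R=\Proj(R[x,y,u,w])$ with $i_R=h_R\circ j_R$ for a linear map $h_R$. Over the closed point the sections $i^*(x),i^*(y),i^*(w)\in H^0(X,\calO_X(1))$ are linearly independent while $i^*(z)=0$; by Nakayama they remain independent over $R$, so after choosing the basis of $j_R$ I may assume $i_R^*(x)=x$, $i_R^*(y)=y$, $i_R^*(w)=w$. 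The only remaining freedom in the map is then $i_R^*(z)=b_9x+b_{10}y+b_{11}w+b_{12}u$, which contributes the four parameters $b_9,\dots,b_{12}$.

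For the embedded part, note that $I$ is the ideal of maximal minors of the $2\times3$ matrix obtained from \eqref{u_matrix} by setting every $a_i=0$, so $X$ is a Hilbert--Burch curve of codimension two. Every flat embedded deformation of $X$ is again cut out by the maximal minors of a perturbation of this matrix, and conversely each such perturbation defines a flat family of codimension-two Cohen--Macaulay curves of Hilbert polynomial $3t+1$. Reducing a general perturbation by row and column operations and by the residual source-coordinate freedom left after the normalisation above brings it to the form \eqref{u_matrix}, with eight essential parameters $a_1,\dots,a_8$; this is the local description of the twisted cubic component of the Hilbert scheme from \cite{PieneSchlessinger}, which I would use in the form of Appendix \ref{app:PS} so as to include characteristics $2$ and $3$. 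Together with the previous paragraph this shows that every class in $\CM_\xi(k[\eps])$ is a unique combination of the twelve first-order deformations $\partial/\partial a_1,\dots,\partial/\partial a_8$ and $\partial/\partial b_9,\dots,\partial/\partial b_{12}$, so $\dim_k\CM_\xi(k[\eps])=12$.

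To conclude I would assemble the family itself. Over $R=k[[a_1,\dots,a_8,b_9,\dots,b_{12}]]$ the maximal minors of \eqref{u_matrix} define, by Hilbert--Burch, a flat family $X_R\subset\PP^3_R$, and the linear map $i_R^*$ makes every geometric fibre a CM-curve of Hilbert polynomial $3t+1$: for $b_{12}\neq0$ the fibre is a genuine twisted cubic and $i_R$ an isomorphism onto its image, while the special fibres are of the type described in Proposition \ref{not immersion}. This defines a morphism $\Spec R\to\CM^{3t+1}_{\PP^3}$ whose Kodaira--Spencer map is, by construction, the isomorphism $k^{12}\mapright{\sim}\CM_\xi(k[\eps])$ found above. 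A deformation with bijective Kodaira--Spencer map is versal, and since $\Spec R$ is smooth of dimension $12=\dim_k\CM_\xi(k[\eps])$ the hull of $\Def_{X/\PP^3}$ is smooth; hence the completed local ring of $(X,i)$ equals $R$. As $(X,i)$ admits no nontrivial infinitesimal automorphisms the deformation is in fact universal, which identifies $(X_R,i_R)$ as the universal family.

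The step I expect to be the main obstacle is the direct tangent space computation of the second paragraph: verifying that the eight matrix directions and the four map directions are linearly independent and that together they exhaust $\CM_\xi(k[\eps])$. The delicate point is to separate the genuine embedded deformations of the determinantal curve --- reduced to \eqref{u_matrix} by row and column operations, exactly where the characteristic $2$ and $3$ analysis of Appendix \ref{app:PS} is required --- from the new map deformations $b_9,\dots,b_{12}$, which have no analogue in the Hilbert scheme of the source and record precisely the smoothing of the projection $i$.
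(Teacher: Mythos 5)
Your proposal follows essentially the same route as the paper's proof: Proposition \ref{regular} to embed the deformation and linearize the map, the determinantal (Hilbert--Burch) description of the embedded deformations from Lemma \ref{defo_curve}, and source coordinate changes to cut the parameters down to $a_1,\dots,a_8,b_9,\dots,b_{12}$; your concluding Kodaira--Spencer/versality packaging is only a cosmetic variant of the paper's direct exhibition of the family. One small correction: the characteristic $2$ and $3$ analysis of Appendix \ref{app:PS} is needed for the obstructed ideal of Lemma \ref{PS section 5}, not for the unobstructed determinantal computation of Lemma \ref{defo_curve} on which your reduction actually relies.
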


\begin{proof} 
We need to describe all deformations of the pair $(X,i)$.
By Proposition \ref{regular} we may assume for
a deformation $(X_R,i_R)$ over a local ring $R$ that
the scheme $X_R$ is embedded in some  $\PP^3_R$ and the morphism $i_R$ is induced by a linear map. 
We start with infinitesimal deformations.
The embedded deformations of the curve 
$X\subset \PP^3=\Proj(k[x,y,u,w])$ are described in 
Lemma \ref{defo_curve}, and are given by the maximal minors of 
\[
\begin{pmatrix}
 x + a_2 w& a_7 y + a_6 w& u+a_{12}x+a_{11}y+a_{10}u+a_9 w\\
 y +a_1w& u + a_5 x+ a_4 y + a_3 w & x +a_8 w
\end{pmatrix},
\]
over the polynomial ring $k[a_1, \ldots, a_{12}]$. 
The morphism $i\colon X\ra \PP^3$ is determined by the  
linear map $i^*(x,y,z,w)=(x,y,0,w)$.
This map  is deformed  
 by perturbing all possible entries. 
We need 16 variables $b_1, \ldots, b_{16}$ to perturb every component as a 
linear expression in $x,y, u,w$.

As we are allowed to perform coordinate transformations
in the source of $i$, we can use the invertible map sending $x$ to the first component of the deformed map,  $y$ to the second,  $u$ to $u$ and $w$ to the fourth component.  By performing these transformations the  map
 simplifies  to the linear map $i_R^*$ determined by
\[
i_R^*(x,y,z,w)=(x,y,b_9x+b_{10}y+b_{11}w+b_{12}u,w)\;.
\]
The matrix displayed above, describing the embedded deformations of the curve, then changes.
But, after a linear change in the $a_i$'s, depending on the $b_j$'s we can take this matrix to be of the same form.
 The remaining available transformations can be used to
make the $(1,3)$-entry of the matrix displayed into $u$. After another linear
change of coordinates in the $a_i$'s the infinitesimal deformations are given by
the matrix \eqref{u_matrix} and the morphism $i_R$. These formulas define
a deformation over the  completion of the 
polynomial ring $k[a_1, \ldots, a_8, b_9, \ldots, b_{12}]$ in the ideal $(a_1, \ldots, a_8, b_9, \ldots b_{12})$.

\end{proof}

The next two lemmas will be used to describe the space of $\CM$ curves, with polynomial $3t+1$, in the plane
and in $\PP^n$ with $n>3$ respectively. 

\begin{lemma}\label{P2 degenerate} 
Let $X\subseteq \PP^3$ be the closed subscheme given by the 
graded ideal $I=(u^2, uy-x^2,xu)\subseteq k[x,y,u,w]$, and let 
$i_1\colon X\ra \PP^2$
be the morphism \eqref{map_to_P2}. The completion of the local ring of the point 
$(X,i_1)$ in $\CM^{3t+1}_{\PP^2}$ is given by the power series  ring 
$R_1=k[[a_1, \ldots, a_8]]$. The universal family over $\Spec(R_1)$ 
is given by the pair 
$(X_{R_1}, i_1)$, where $X_{R_1}$ is given by the ideal $J\subseteq R_1[x,y,u,w]$ 
generated by the maximal minors of \eqref{u_matrix}
and the morphism $i_{R_1}\colon X_{R_1}  \ra \PP^2_{R_1},$ is the one obtained from $i_1$. 
\end{lemma}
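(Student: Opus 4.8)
The plan is to mirror the proof of Lemma~\ref{degenerate}, tracking how the smaller target $\PP^2$ changes the count of deformation parameters. First I would invoke Proposition~\ref{regular} (with $n=2$) to reduce to the embedded situation: any deformation $(X_{R_1},i_{R_1})$ over a local ring $R_1$ presents $X_{R_1}$ as a curve in $\PP^3_{R_1}$ via its complete linear system, with the morphism to $\PP^2_{R_1}$ induced by a linear map $R_1[x,y,w]\ra R_1[x,y,u,w]$. The embedded curve is then governed by the same matrix as in Lemma~\ref{defo_curve}, with the full set of parameters $a_1,\ldots,a_{12}$, while the morphism $i_1$ is recorded by its three images $i_1^*(x),i_1^*(y),i_1^*(w)$ rather than four.

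Next I would enumerate the infinitesimal deformations. The embedded curve again contributes the twelve parameters $a_1,\ldots,a_{12}$. The morphism is deformed by perturbing each of its three components to a general linear form in $x,y,u,w$, giving $3\times 4=12$ parameters. The decisive difference from the $\PP^3$ case is that here all three components are deformations of the \emph{nonzero} forms $x,y,w$, whereas for the target $\PP^3$ the $z$-component was a deformation of $0$.

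I would then use source coordinate transformations. Because the deformed forms $i_1^*(x),i_1^*(y),i_1^*(w)$ are small perturbations of the independent forms $x,y,w$, the assignment $x\mapsto i_1^*(x)$, $y\mapsto i_1^*(y)$, $w\mapsto i_1^*(w)$, $u\mapsto u$ is an invertible change of source coordinates. After it the morphism is again the standard projection, so \emph{all twelve} morphism parameters are absorbed---unlike in $\PP^3$, where the degenerate $z$-component could not serve as a coordinate and four parameters $b_9,\ldots,b_{12}$ survived. This transformation alters the curve matrix, but after a linear change in the $a_i$ it returns to the form of Lemma~\ref{defo_curve}. Finally, exactly as in Lemma~\ref{degenerate}, the remaining transformations fixing $x,y,w$ (those acting only on $u$) normalize the $(1,3)$-entry of the matrix to $u$, eliminating $a_9,\ldots,a_{12}$ and leaving the eight parameters $a_1,\ldots,a_8$ and the matrix~\eqref{u_matrix}.

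This identifies the tangent space as eight-dimensional and produces the explicit eight-parameter family over $k[[a_1,\ldots,a_8]]$ given by \eqref{u_matrix} together with the projection $i_1$. As in Lemma~\ref{degenerate}, this family is a flat deformation inducing an isomorphism on tangent spaces over a smooth base, hence is versal, so the completed local ring is $R_1=k[[a_1,\ldots,a_8]]$ with universal family $(X_{R_1},i_{1,R_1})$ as stated. The step I expect to require the most care is this simultaneous normalization: checking that absorbing all twelve morphism parameters (using the nondegeneracy of $x,y,w$) is compatible with the subsequent curve normalization, so that the two reductions do not interfere and the count drops precisely to eight.
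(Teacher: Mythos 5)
Your argument is correct and is essentially the paper's own proof: the paper simply says to proceed as in Lemma~\ref{degenerate}, perturbing $i_1$ in all possible ways and using source coordinate transformations to undo these perturbations, which is exactly the reduction you carry out (all twelve morphism parameters are absorbed because the components deform the independent forms $x,y,w$ rather than $0$, and the residual transformations in $u$ then remove $a_9,\ldots,a_{12}$). No substantive difference from the paper's approach.
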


\begin{proof} 
We proceed as in the proof of  Lemma \ref{degenerate}.
Now we perturb the map $i_1$ in all possible ways, and we can use 
coordinate transformations in the source to undo these perturbations.
\end{proof}

\begin{lemma}\label{Pn degenerate} 
Let  $X\subseteq \PP^3$ be the closed subscheme given by the 
graded ideal $I=(u^2, uy-x^2,xu)\subseteq k[x,y,u,w]$ as in Lemma \ref{P2 degenerate},  
but now let $i_2\colon X\ra \PP^n=\Proj (k[x,y,w, z_1, \ldots, z_{n-2}])$ $(n>3)$
be the morphism  determined by the linear map that sends $z_i$ to $0$ $($for $i=1, \ldots, n-2)$ and is the identity on $x,y $ and $w$.

The completion of the local ring of the point 
$(X,i_2)$ in $\CM^{3t+1}_{\PP^n}$ is given by the power series ring 
\[R_2=k[[a_1, \ldots, a_8,b_{1,1},\dots,b_{1,4}, b_{2,1}, \dots, b_{n-2,4}]].\]
The universal family over $\Spec(R_2)$ is given by the pair 
$(X_{R_2}, j_2)$, where $X_{R_2}$ is given by  the maximal minors  of \eqref{u_matrix} 
and the morphism $i_{R_2}\colon X_{R_2} \ra \PP^n$ is determined by the linear map that is the identity on $x,y$ and $w$, and 
that sends 
$(z_1, \ldots, z_{n-2})$ to 
\[(b_{1,1}x+b_{1,2}y+b_{1,3}w+b_{1,4}u,\dots,
b_{n-2,1}x+b_{n-2,2}y+b_{n-2,3}w+b_{n-2,4}u).
\]
\end{lemma}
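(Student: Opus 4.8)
The plan is to follow the proof of Lemma~\ref{degenerate} almost verbatim, treating the $n-2$ extra coordinates $z_1,\dots,z_{n-2}$ as passive directions in the target that each contribute four free parameters to the deformation of the morphism. First I would invoke Proposition~\ref{regular} to reduce to the situation of a deformation $(X_R,i_R)$ over a local ring $R$ in which $X_R$ is embedded in $\PP^3_R=\Proj(R[x,y,u,w])$ and $i_R$ is induced by a linear map $R[x,y,w,z_1,\dots,z_{n-2}]\ra R[x,y,u,w]$; at the base point this map is the identity on $x,y,w$ and sends every $z_i$ to $0$. The source $\PP^3$ and the curve $X$ are exactly those of Lemma~\ref{degenerate}.

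Next I would write down the infinitesimal deformations. The embedded deformations of $X\subset\PP^3$ form the twelve-parameter family of Lemma~\ref{defo_curve}, precisely as in Lemma~\ref{degenerate}, while the morphism is deformed by perturbing each of the $n+1$ coordinate functions into an arbitrary linear form in $x,y,u,w$, giving $4(n+1)$ parameters. As in Lemma~\ref{degenerate} I would exploit the coordinate transformations of the source $\PP^3$: the invertible map sending $x,y,w$ to the perturbed images of $x,y,w$ (and fixing $u$) absorbs the twelve parameters coming from the $x$-, $y$- and $w$-components; this changes the curve matrix, which after a linear change in the $a_i$ depending on the $b$'s is brought back to the same shape, and the residual transformations acting on $u$ are used to normalise the $(1,3)$-entry to $u$, reducing the curve parameters from twelve to the eight parameters $a_1,\dots,a_8$ of \eqref{u_matrix}. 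The only difference from Lemma~\ref{degenerate} is that the perturbations of the $z_i$-components lie in the target and are untouched by any source transformation, so each $z_i$ retains its four parameters $b_{i,1},\dots,b_{i,4}$. This produces the morphism $i_{R_2}$ as stated and a tangent space of dimension $8+4(n-2)=4n$.

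Finally I would argue that the deformation functor $\CM_\xi=\Def_{X/\PP^n}$ is unobstructed, so that the completion of the local ring is a power series ring of dimension equal to this tangent space. The embedded deformations are unobstructed because $X$ is defined by the maximal minors of \eqref{u_matrix} and the matrix entries may be perturbed freely, and the deformations of the linear map are visibly unobstructed. The explicit pair $(X_{R_2},i_{R_2})$ then defines a family over $R_2=k[[a_1,\dots,a_8,b_{1,1},\dots,b_{n-2,4}]]$ whose Kodaira--Spencer map is the isomorphism computed above; hence this family is versal, identifying the completion of the local ring of $(X,i_2)$ in $\CM^{3t+1}_{\PP^n}$ with $R_2$ and the universal family with the displayed one.

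I expect the main obstacle to be the bookkeeping in the infinitesimal normalisation: one must check that the source coordinate transformations simultaneously return the curve matrix to the form \eqref{u_matrix} and the morphism to its normal form, with the residual $u$-transformations exactly accounting for the four redundant curve parameters. Since the extra coordinates $z_i$ enter only the target and never the source $\PP^3$, they play no role in this normalisation, and the verification reduces to the same linear-algebra computation already carried out for $\PP^3$ in Lemma~\ref{degenerate}.
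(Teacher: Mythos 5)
Your proposal is correct and follows exactly the route the paper intends: its proof of this lemma is literally ``Similar to the proof of Lemma~\ref{degenerate},'' and your write-up is a faithful elaboration of that adaptation, with the correct bookkeeping (source transformations absorbing the $x$-, $y$-, $w$-perturbations and the residual $u$-transformations cutting the curve parameters from twelve to eight, leaving $8+4(n-2)=4n$) and the same unobstructedness argument via the explicit determinantal family.
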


\begin{proof} 
Similar to the  proof of  Lemma \ref{degenerate}.
\end{proof}

\begin{prop}\label{smooth}
The space $\CM^{3t+1}_{\PP^3}$ is smooth  and irreducible of dimension 12.
\end{prop}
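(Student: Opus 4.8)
The plan is to run the semicontinuity argument announced at the start of this subsection: the function $p\mapsto\dim_k T_p$ assigning to a point the dimension of its tangent space is upper semicontinuous, and it is constant along the orbits of the natural $\mathrm{PGL}_4$-action (change of coordinates on $\PP^3$), so it suffices to control it at the most degenerate point. First I would read off from Lemma \ref{degenerate} that the point $p_0=(X,i)$ attached to the ideal $I=(u^2,uy-x^2,xu)$ is a \emph{smooth} point of dimension $12$: its completed local ring is the power series ring $k[[a_1,\dots,a_8,b_9,\dots,b_{12}]]$, which is regular of Krull dimension $12$, so $\dim_k T_{p_0}=12$.

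Next I would invoke the (forthcoming) fact that $p_0$ is the most special point, i.e.\ that it lies in the closure of the orbit of every $(X',i')\in\CM^{3t+1}_{\PP^3}$. Since the tangent dimension is constant on each orbit and cannot decrease under specialization, this forces $\dim_k T_p\le\dim_k T_{p_0}=12$ for every closed point $p$.

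For irreducibility I would combine smoothness at $p_0$ with this degeneration property. As $p_0$ is a smooth point, its local ring is a domain, so $p_0$ lies on a unique irreducible component $Z$. Each orbit closure is irreducible, hence contained in a single component; since it contains $p_0$, that component must be $Z$. As the whole space is covered by orbit closures, we conclude $\CM^{3t+1}_{\PP^3}=Z$ is irreducible. The twisted cubics form a single open orbit, of dimension $\dim\mathrm{PGL}_4-\dim\mathrm{PGL}_2=15-3=12$ (the stabilizer of a twisted cubic is the $3$-dimensional group of reparametrisations of $\PP^1$), so the space has dimension $12$.

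Smoothness then follows formally: on the irreducible variety $\CM^{3t+1}_{\PP^3}$ every closed point has local ring of Krull dimension $12$, so together with the bound above $12\le\dim_k T_p\le 12$; hence each local ring is regular and every point is smooth. I expect the genuine obstacle to be the input used in the second step --- that every CM-curve with Hilbert polynomial $3t+1$ degenerates to the curve defined by $I=(u^2,uy-x^2,xu)$ --- whose verification requires classifying the possible degenerations and is where the real geometry sits.
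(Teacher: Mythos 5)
Your overall strategy --- bound the tangent space everywhere by $12$ via upper semicontinuity from a most degenerate point, then combine with a $12$-dimensional open locus --- is the same as the paper's, and your reading of Lemma \ref{degenerate} (tangent space of dimension $12$ and unobstructedness at that point) is correct. But the input you flag as "the real geometry," namely that the point $p_0$ of Lemma \ref{degenerate} lies in the closure of the orbit of \emph{every} $(X',i')\in\CM^{3t+1}_{\PP^3}$, is false, and this breaks both your tangent space bound and your irreducibility argument. Concretely, take $L\subset\PP^3$ a line and let $X'=V(I_L^2)$ be its first infinitesimal neighbourhood, with $i'$ the closed immersion. This is a CM-curve with Hilbert polynomial $3t+1$ (it is the most degenerate point of the twisted cubic component of the Hilbert scheme). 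Its $\mathrm{PGL}_4$-orbit is isomorphic to the Grassmannian of lines in $\PP^3$, which is projective; so the orbit is already closed, of dimension $4$, and does not contain $p_0$. (It cannot: $p_0$'s source curve is a triple line on a quadric cone, not projectively equivalent to $V(I_L^2)$, and $V(I_L^2)$ can never degenerate to a non-immersive CM-curve because it has embedding dimension $3$ at every point, hence admits no map to any surface that is generically an isomorphism onto its image.) Your semicontinuity argument therefore gives no bound on $\dim_k T_p$ at this point, and your claim that every orbit closure meets the unique component through $p_0$ fails there as well.

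The paper circumvents exactly this by splitting into cases. For closed immersions it shows (via Proposition \ref{regular}) that deformations of the pair $(X,i)$ are just embedded deformations of $X\subset\PP^3$, so the tangent space equals the tangent space to the twisted cubic component of $\Hilb_{\PP^3}$, which is $12$-dimensional at \emph{every} point --- including $V(I_L^2)$ --- by the Piene--Schlessinger smoothness result (extended to all characteristics in Appendix \ref{app:PS}). Only on the non-immersive locus does it specialize to the single model point of Lemma \ref{degenerate}, after checking that the source of a non-immersive CM-curve is never $V(I_L^2)$ and hence always degenerates to a triple line on a cone with an admissible projection centre. Connectedness (hence, once smoothness is known, irreducibility) is obtained not from orbit closures through $p_0$ but by perturbing the projection defining a non-immersive $i$ back to a closed immersion. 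If you want to keep your orbit-theoretic framing, you must at minimum treat the closed-immersion stratum by a separate argument of this kind; without it the proof does not close.
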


\begin{proof} 
Let $(X,i)$ be a $k$-point of  $\CM =\CM^{3t+1}_{\PP^3}$
 By Proposition \ref{regular} we may assume that $X$
is embedded in some $\PP^3$ and that the morphism $i$ is the restriction of a rational map $h\colon \PP^3 \dashrightarrow \PP^3$. 
In particular,  $X$ itself is a curve on the twisted cubic component. 
Furthermore, the same holds for all infinitesimal deformations. 

If $i\colon X\ra \PP^3$ is a closed immersion, then we
may assume that $h$ is the identity and remains so
under deformation. Consequently the infinitesimal deformations  of $i\colon X\subset \PP^3$ are the embedded deformations of $X$.
Therefore the tangent space at the point $(X,i)$
is isomorphic to the tangent space to the twisted cubic component in the point $X$, which has dimension 12
\cite[p. 766]{PieneSchlessinger}.

Assume now that $i\colon X \ra \PP^3$ is not a closed immersion. By perturbing the coefficients defining the rational map $h$ we can deform the morphism $i\colon X\ra \PP^3$ to a closed immersion.  Thus any point on $\CM$ is the specialization of a twisted cubic in $\PP^3$. It follows that the space $\CM$ is irreducible, and consequently to prove the proposition it suffices to show that the tangent space at any point  has dimension 12.

As the image $i(X)=D$ spans a plane, the  map $h$
is the projection from a point $Q$. Because $i$ fails to be an isomorphism over exactly one point of $D$, the point $Q$ does not lie on two different tangent or secant lines.
Considered as a point on the twisted cubic component, the curve $X\subset \PP^3$ cannot be the most degenerate curve, 
which is a triple line given by the square of the ideal of a line \cite[Section 1.b]{Harris}. Such a curve has everywhere embedding dimension 3, so is not locally planar. Every
other Cohen-Macaulay curve on the twisted cubic component degenerates to triple line on a quadric cone. For such a curve the center
$Q$ of the projection does not lie on the tangent plane
to the cone containing the line. All maps $i\colon X \ra \PP^3$ with $X$ a triple line on a cone and $i$ generically an isomorphism
are projectively equivalent, so it suffices to compute the tangent space in one specific example, for which 
we take the curve $(X,i)$ of Lemma \ref{degenerate}.
By specialization the dimension of the tangent space at any point is bounded from above by the dimension of the tangent space at this $(X,i)$, which is 12.
\end{proof}

\begin{cor}\label{n-smooth} 
For $n>3$, the space $\CM^{3t+1}_{\PP^n}$ is smooth and irreducible of dimension $4n$.
\end{cor}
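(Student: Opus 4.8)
The plan is to follow the strategy of Proposition \ref{smooth}, replacing the input of Lemma \ref{degenerate} by that of Lemma \ref{Pn degenerate}. The key numerical observation is that the completed local ring produced by Lemma \ref{Pn degenerate} at the point $(X,i_2)$ is the power series ring $R_2$ in the $8$ variables $a_1,\ldots,a_8$ together with the $4(n-2)$ variables $b_{i,j}$, a total of $8+4(n-2)=4n$ variables. Thus $(X,i_2)$ is a smooth point of $\CM^{3t+1}_{\PP^n}$ of local dimension $4n$, and its Zariski tangent space has dimension exactly $4n$. First I would single out this point as the most degenerate member of the space and show that it lies in the closure of every other point; everything else then follows by semicontinuity.

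To this end I would take an arbitrary field-valued point $(X,i)$ and invoke Proposition \ref{regular} to realize $X$ as a closed subscheme of $\PP^3$ with $i=h\circ j$, where $j\colon X\hookrightarrow \PP^3$ is the embedding and $h\colon \PP^3\dashrightarrow \PP^n$ is induced by a linear map. As in the proof of Proposition \ref{smooth}, the curve $X$ is a member of the twisted cubic component, and every such curve degenerates to the triple line $X_0$ on a quadric cone defined by $I=(u^2,uy-x^2,xu)$. Simultaneously I would degenerate the linear map $h$ to the special map $i_2$ of Lemma \ref{Pn degenerate}, which sends $z_1,\ldots,z_{n-2}$ to $0$. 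Carrying out both degenerations inside a single flat family of CM-curves exhibits $(X_0,i_2)$ as a specialization of $(X,i)$; in particular the space is connected.

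Granting this, the conclusion is formal. Since $(X_0,i_2)$ is a smooth point lying in the closure of every point of $\CM^{3t+1}_{\PP^n}$, it lies on each irreducible component, and a smooth point lies on only one component; hence the space is irreducible, of dimension equal to the local dimension $4n$ at $(X_0,i_2)$ (geometrically a generic CM-curve is a twisted cubic spanning some $\PP^3\subset\PP^n$, giving $12+4(n-3)=4n$ moduli). By upper semicontinuity the Zariski tangent dimension attains its maximum at $(X_0,i_2)$, where it equals $4n$, so the tangent dimension is at most $4n$ at every point; since the space is irreducible of dimension $4n$ the tangent dimension is also at least $4n$ everywhere, forcing equality and hence smoothness.

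The main obstacle is the simultaneous degeneration in the middle paragraph. In Proposition \ref{smooth} one only had to degenerate the curve $X$ and perturb a projection center, but here one must, within a single flat CM-family, bring both the curve $X$ to the triple line on the cone and the full linear map $h$ down to the maximally degenerate $i_2$, while keeping the extra projection data $b_{i,j}$ under control so that the fibres remain CM-curves with Hilbert polynomial $3t+1$. Checking that a generic linear embedding $\PP^3\hookrightarrow \PP^n$, and every intermediate $h$, specializes to $i_2$ compatibly with the curve degeneration is where the real work lies; once this is established, the count $8+4(n-2)=4n$ together with the semicontinuity argument finishes the proof.
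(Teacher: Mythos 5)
Your overall architecture (distinguished smooth point from Lemma \ref{Pn degenerate}, specialization, semicontinuity) is close in spirit to the paper's, but the central claim on which everything rests --- that $(X_0,i_2)$ lies in the closure of (the orbit of) every point, so that semicontinuity bounds the tangent dimension everywhere by $4n$ --- is false, and the paper does not use it in that generality. The problem is the closed immersions. Your second paragraph asserts that every curve on the twisted cubic component degenerates to the triple line $X_0$ on the quadric cone; but the proof of Proposition \ref{smooth} explicitly singles out the exception: the triple line defined by the \emph{square of the ideal of a line} is strictly more degenerate than $X_0$, so it does not specialize to $X_0$ --- the specialization goes the other way. Moreover, since that curve has embedding dimension $3$ everywhere, it is not locally planar and hence cannot be the source of any non-immersion CM-curve (whose image would have to be a plane cubic), so the pair (fat line, closed immersion) admits no degeneration at all towards $(X_0,i_2)$. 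At such points your semicontinuity argument gives no bound on the Zariski tangent space, and with it both your smoothness claim and your irreducibility claim (``a smooth point lying on every component'') lose their justification.

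The paper closes exactly this gap by treating closed immersions directly rather than by specialization: for $i$ a closed immersion, $X$ lies in some $\PP^3\subset\PP^n$ (Lemma \ref{embedded}), its infinitesimal deformations are the embedded deformations (Proposition \ref{regular}), and $\dim H^0(X,\calN_{X/\PP^n})=\dim H^0(X,\calN_{X/\PP^3})+\dim H^0(X,\calN_{\PP^3/\PP^n}\otimes\calO_X)=12+4(n-3)=4n$, the first term being the tangent space to the twisted cubic component, which is smooth by Piene--Schlessinger. The specialization-to-$(X_0,i_2)$ argument is then needed only for non-immersion points, where it does work: there the source curve is locally planar, hence not the fat line, hence degenerates to the triple line on the cone, and all such pairs are projectively equivalent to the one of Lemma \ref{Pn degenerate}. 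If you insert the normal-bundle computation on the immersion locus, the remainder of your argument (connectedness by perturbing the map to an immersion, then smoothness plus connectedness giving irreducibility) goes through as in the paper.
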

\begin{proof} Let $i\colon X\ra \PP^n$ be a CM-curve.   If $i$ is a closed immersion it follows 
from Lemma \ref{embedded} that the curve $X$ is contained in some $\PP^3$, and from Proposition \ref{regular} that 
infinitesimal deformations of $i(X)\subset  \PP^n$ are the embedded deformations of $X$. The dimension of the tangent space is the dimension of $H^0(X,\calN_{X/\PP^n})$, which can be computed as the sum of the dimensions of
$H^0(X,\calN_{X/\PP^3})$ and $H^0(X,\calN_{\PP^3/\PP^n}\otimes\calO_X)$, where $\calN$ stands for the normal sheaf. The dimension is $12+4(n-3)=4n$.

If $i$ is not a closed immersion then we have by Proposition \ref{not immersion} that the image is planar. 
Arguing  as in the proof of Proposition \ref{smooth} we see
that the space $\CM$ is irreducible and that to conclude the proof it suffices to compute the dimension of the tangent space in  the explicit example of  Lemma \ref{Pn degenerate}. There the dimension is $4n$.
\end{proof}

 \begin{rem} We note that 
the space $\CM^{3t+1}_{\PP^n}$ is smooth and irreducible of dimension $4n$, for all $n\ge 3$.   The case $n=2$ will be described quite explicitly in Section \ref{sec:singular sections cubics}. 
In particular, the space $\CM^{3t+1}_{\PP^2}$ is smooth of dimension 8. 
This can also
be shown using Lemma \ref{P2 degenerate}.
 \end{rem}
 
\begin{rem}
For CM-curves $(X,i)$ with Hilbert polynomial $2t+2$ the analogue of Proposition \ref{not immersion} holds: if the map $i$ is not a closed immersion, then the image
$D=i(X)$ is a singular plane conic, 
and there is exactly one point $P\in D$ over which the morphism $i$ fails to be an isomorphism. If $i\colon X\ra \PP^3$ is a closed immersion, then the curve is 
not arithmetically Cohen-Macaulay, but one can still show that  
$\CM^{2t+2}_{\PP^3}$ is smooth and irreducible of dimension $8$. 
\end{rem}

\section{Plain double points}

In this section we focus on CM-curves where the non-isomorphism locus is the simplest possible. The following definition is motivated by Proposition \ref{not immersion}.
\begin{defn}\label{simple}Let $i\colon X \ra \PP^n$ be a 
CM-curve, and let $D\subseteq \PP^n$ denote its schematic image. A point $P$ in $D$ is a \textit{plain double point}  if the 
$\calO_{D,P}$-module $\calK\otimes_{\calO_D}{\calO_{D,P}}$ has length one, where $\calK=i_*{\calO}_X/\calO_D$.
\end{defn}

\begin{rem} The definition of a plain double point requires  the double point locus 
of a map $i\colon X \ra \PP^n$ to be as simple as possible. The point $P$ is always a singular point of the image, as a birational morphism
onto a smooth curve is an isomorphism (apply this argument to a suitable affine neighbourhood of $P$). 
But the singularity  might be of higher type. 
For instance, let $X$ be three lines in $\PP^3$ meeting in one point,
not lying in a plane,
and let 
$i\colon X \ra \PP^2$ be a general projection. Then the image $D=i(X)$ consists of three 
lines in the plane through one point $P$. The point $P\in D$ is a plain double point of the CM-curve
$i\colon X \ra \PP^2$. However the singularity $P \in D$ is a triple point and not 
a (planar) double point.
\end{rem}

\subsection{The condition (R.C.)} In the following the Ring Condition (R.C.) of De Jong and Van Straten 
\cite{DucoTheo} plays an important role. We recall the setup. Let $(R,\m)$ be a local ring, let $A$ be a local $R$-algebra, flat and Gorenstein over $R$.  An $A$-module $M$ is Cohen-Macaulay over $R$ if it is flat and $\overline{M}=M\otimes_RR/\m$ is Cohen-Macaulay. Write $\overline A$ for $A\otimes_RR/\m$. If  furthermore the codimension $\dim \overline{A}- \dim_{\overline{A}}\overline{M}$  
is zero we say that $M$ is maximally Cohen-Macaulay (MCM) over $R$.  A fractional ideal is a finitely generated sub-module $M$ of the total fraction ring $QA$ of $A$, containing a non-zero divisor.
By \cite[Propopsition 1.7]{DucoTheo}
the duality functor $M\mapsto \Hom_A(M,A)$ is an inclusion reversing involution on the category of fractional MCM's over $R$,
commuting with specialization for MCM's.
In this situation we have the following result \cite[Propopsition 1.8]{DucoTheo}.
\begin{prop}\label{DucoTheo}
Let $A$ be an $R$-algebra, flat and Gorenstein over $R$,  let $B$ be a fractional MCM $A$-module over $R$ and let $C=\Hom_A(B,A)$  be its dual module. Then $B$ is a ring
$($with ring structure induced from $B\subset QA)$ if and only if 
the natural inclusion map 
\begin{equation}\label{RC}
\Hom_A(C,C)\hookrightarrow\Hom_A(C, A)
\end{equation}
is an isomorphism.
\end{prop}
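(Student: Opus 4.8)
The plan is to translate the whole statement into the language of fractional ideals inside $QA$ and reduce it to a transparent computation with colon ideals. First I would record the standard identification: for fractional MCM modules $M,N\subseteq QA$, each containing a non-zero divisor, any $A$-linear map $M\to N$ localises at the non-zero divisors of $A$ to a $QA$-linear endomorphism of $QA=S^{-1}M=S^{-1}N$, hence is multiplication by a fixed element of $QA$. This gives a natural isomorphism $\Hom_A(M,N)\cong (N:M):=\{q\in QA\mid qM\subseteq N\}$. Under the standing normalisation $A\subseteq B$ (so that $C=\Hom_A(B,A)=(A:B)\subseteq A$), the inclusion $C\hookrightarrow A$ induces the map \eqref{RC}, which becomes precisely the inclusion of fractional ideals $(C:C)\hookrightarrow (A:C)$.

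Next I would invoke the involution of \cite[Proposition 1.7]{DucoTheo}. Because the duality functor $M\mapsto\Hom_A(M,A)$ is an inclusion-reversing involution on fractional MCM's, applying it twice to $B$ gives $\Hom_A(C,A)=(A:C)=(A:(A:B))=B$. Thus the target of \eqref{RC} is canonically $B$ itself, while the source $\Hom_A(C,C)=(C:C)$ is the idealizer of $C$. The key structural remark is that $(C:C)$ is always a subring of $QA$: it contains $A$, and it is closed under multiplication since $xC\subseteq C$ and $yC\subseteq C$ force $xyC\subseteq C$. This already settles one direction: if \eqref{RC} is an isomorphism then $B=(C:C)$, which is a ring with the multiplication induced from $QA$.

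For the converse I would argue directly with the colon ideals, where everything collapses to a one-line manipulation. Note that \eqref{RC} supplies the inclusion $(C:C)\subseteq(A:C)=B$ unconditionally, so it is an isomorphism if and only if $B\subseteq(C:C)$. Now
\[
B\subseteq (C:C)\iff B\,C\subseteq C=(A:B)\iff B\,C\,B\subseteq A\iff B^2\subseteq (A:C)=B,
\]
again using reflexivity for the last identification. Since $A\subseteq B$ means $1\in B$, the condition $B^2\subseteq B$ is exactly the statement that $B$ is a ring under the multiplication inherited from $QA$. Chaining these equivalences yields $B$ is a ring $\iff B^2\subseteq B\iff B\subseteq(C:C)\iff (C:C)=B\iff \eqref{RC}$ is an isomorphism.

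The main obstacle to handle carefully is the bookkeeping of the two identifications underlying the equivalence: that the target $\Hom_A(C,A)$ is $B$ via reflexivity of fractional MCM's, and that the natural map \eqref{RC} is genuinely the inclusion of colon ideals under the normalisation $A\subseteq B$. Once these are in place, the equivalence reduces to the displayed chain, whose crux is the reordering $B\,C\,B=C\,(B^2)$ in the commutative ring $QA$ together with the reflexivity identity $(A:C)=B$.
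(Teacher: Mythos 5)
Your proof is correct, but be aware that the paper itself gives no proof of this proposition: it is quoted from de~Jong and van~Straten \cite[Proposition 1.8]{DucoTheo}, with the only nontrivial input --- that $M\mapsto\Hom_A(M,A)$ is an inclusion-reversing involution on fractional MCM's (their Proposition 1.7) --- recalled immediately beforehand as a black box. Your argument is essentially the standard one and isolates that input correctly: under the identification $\Hom_A(M,N)\cong(N:M)$ for fractional ideals, involutivity becomes the reflexivity $(A:(A:B))=B$, and the rest is the elementary colon-ideal chain $B\subseteq(C:C)\iff BC\subseteq C\iff B^2C\subseteq A\iff B^2\subseteq(A:C)=B$, each link of which checks out. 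The one point you should state as a hypothesis rather than a ``standing normalisation'' is $A\subseteq B$ (equivalently, by reflexivity, $C\subseteq A$): without it the natural map $\Hom_A(C,C)\to\Hom_A(C,A)$ does not exist at all --- for instance $B=\m$ in a one-dimensional Gorenstein local ring gives $C=(A:\m)\not\subseteq A$ --- so this assumption is what makes the statement parse, not a harmless choice of representative. It is, however, exactly the situation in which the paper uses the result ($A=\calO_{D,P}\subseteq B=i_*\calO_{X,P}$, with $C=\n$ an ideal of $A$), and the remark following the proposition confirms that $C$ is meant to be an ideal of $A$, so with that hypothesis made explicit your proof stands as a complete and self-contained replacement for the citation.
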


A submodule $C$ of a ring $A$, that is an ideal, satisfies the Condition (R.C.)
if the natural inclusion map \eqref{RC} is an isomorphism. 

\begin{prop}\label{sing section}  Let $(X_R,i_R)\in \CM_{\PP^2}(\Spec R)$ with $R$ a local ring. 
Let $i\colon X\ra \PP^2$ denote the induced curve over the closed point in $R$, and assume that $P\in \PP^2$ is a plain double point. 
Let $\xi \in \PP^2_R$ be the image of the point $P$ under the natural inclusion $\PP^2\subseteq \PP^2_R$. Then $\xi$ is a point on the schematic image $D_R\subseteq \PP^2_R$ of $X_R$, and we have a presentation
\begin{equation}\label{pres_R}
\oplus_{i=1}^2 \calO_{\PP^2_R,\xi} \maplongright{\begin{pmatrix} 
g & f\\ s & t \end{pmatrix}}  \oplus_{i=1}^2 
\calO_{\PP^2_R,\xi} \ra  i_{R*}{\calO}_{X_R,\xi} \ra 0.
\end{equation}
where  $s$ and $t$  are elements in  the maximal ideal of $\calO_{\PP^2_R,\xi}$
and $g$ and $f$ lie in the ideal $\n_R=(s,t)$. 
Furthermore, we have the equality of ideals $\n_R =\Ann_{D_R}(i_{R*}\calO_{X_R}/\calO_{D_R})$.
\end{prop}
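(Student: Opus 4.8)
The plan is to pass to the local ring $A=\calO_{\PP^2_R,\xi}$ and to the finite $A$-module $M=i_{R*}\calO_{X_R,\xi}$, which is flat over $R$ because $X_R\ra\Spec R$ is flat and $i_R$ is finite. Write $\bar A=\calO_{\PP^2,P}$ and $\bar M=M\otimes_Rk=(i_*\calO_X)_P$. The first assertion, that $\xi$ lies on $D_R$, is immediate: since $P$ lies in the support $D$ of $i_*\calO_X$ we have $\bar M\neq0$, hence $M\otimes_A\kappa(\xi)=\bar M/\m_{\bar A}\bar M\neq0$, so $\xi$ lies in the support of $M$, which is the schematic image $D_R$. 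The substance is the presentation, and I would obtain it by first analysing the closed fibre and then lifting over $R$.

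On the closed fibre $\bar A$ is regular of dimension $2$ and $\bar M$ is Cohen--Macaulay of dimension $1$, so $\bar M$ has projective dimension $1$ over $\bar A$ and a free resolution $0\ra\bar A^{n}\ra\bar A^{m}\ra\bar M\ra0$. Since $P$ is a plain double point, $\calO_{D,P}\subseteq\bar M$ with quotient $\bar M/\calO_{D,P}$ of length one; a short Nakayama argument (the cubic equation $d$ kills $\bar M$, so $\bar M/\m_{\bar A}\bar M=\bar M/\m_{D,P}\bar M$) shows $\bar M$ is minimally generated by $1$ and one further element $\eta$, so $m=2$, and comparing ranks at the generic point of $\bar A$ gives $n=2$. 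Thus $\bar M$ has a square $2\times2$ presentation. The conductor $\n=\Ann_{\calO_{D,P}}(\bar M/\calO_{D,P})=\m_{D,P}$ is also an ideal of the ring $\bar M$ and satisfies $\n\bar M=\n$; choosing generators $s,t$ of $\n$ and setting $g=-s\eta$ and $f=-t\eta$, these products lie in $\n$, and $(g,s)$ and $(f,t)$ are relations among $1,\eta$ with all four entries in $\n\subseteq\m_{\bar A}$. The decisive point is that the determinant $gt-fs$ generates the ideal $(d)=I_{D,P}$ of the plane cubic: indeed $\bar M$ is a faithful module of rank one over the hypersurface $\calO_{D,P}=\bar A/(d)$, so its zeroth Fitting ideal is exactly $(d)$ (the matrix-factorisation description of a maximal Cohen--Macaulay module over a hypersurface). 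Writing $gt-fs=ud$ with $u$ a unit, one checks directly that $(d,0)$ and $(0,d)$ lie in the submodule generated by $(g,s)$ and $(f,t)$; since every relation reduces modulo these to a combination of $(d,0)$ and $(0,d)$, the two relations generate the full syzygy module, and $\bar M$ is presented by the matrix with rows $(g,f)$ and $(s,t)$.

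To globalise over $R$ I would run the same construction relatively. Lifting $1,\eta$ to generators of $M$ (right-exactness of $\otimes_Rk$ and Nakayama), let $N=\ker(A^2\ra M)$; as $M$ is $R$-flat, so is $N$, and $N\otimes_Rk$ is the closed-fibre syzygy module. The elements $s,t$ and $g=-s\eta$, $f=-t\eta$ should be read off the relative conductor: one first shows that $\calK_R=i_{R*}\calO_{X_R}/\calO_{D_R}$ is $R$-flat with fibre of length one, so that it is the structure sheaf of a section $Z\subseteq D_R$ and the conductor $\n_R=\Ann_{D_R}(\calK_R)$ commutes with base change, reducing to $\n=\m_{D,P}$. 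By Nakayama $\n_R$ is then generated by two elements $s,t\in\m_A$ lifting a minimal generating set of $\n$; since $\n_R$ is an ideal of $M$ with $\n_RM=\n_R$, the elements $g=-s\eta$ and $f=-t\eta$ lie in $\n_R=(s,t)$. The two relations $(g,s),(f,t)\in N$ reduce to generators of $N\otimes_Rk$, hence generate $N$ by Nakayama, yielding the presentation \eqref{pres_R} with $s,t\in\m_A$ and $g,f\in\n_R=(s,t)$. The final equality $\n_R=\Ann_{D_R}(\calK_R)$ is then simply the definition of the relative conductor.

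I expect the main obstacle to be the flatness of the relative cokernel $\calK_R$, equivalently the base-change compatibility of the conductor $\n_R$. Granting this, the two-generatedness of $\n_R$ and the whole matrix shape follow formally by lifting the closed-fibre data. I would establish the flatness of $\calK_R$ from the $R$-flatness of $M$ together with the fact that the Fitting-ideal image $D_R$ is flat over $R$ and specialises to $D$ (so that $\calO_{D_R}\hookrightarrow M$ stays injective on the fibre and $\operatorname{Tor}_1^{R}(\calK_R,k)=0$ by the local criterion of flatness); the plain-double-point hypothesis, forcing the fibre length of $\calK$ to be one, is precisely what makes $\calK_R$ a line bundle over a section rather than something more complicated.
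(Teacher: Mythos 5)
Your analysis of the closed fibre is correct and close to the paper's: you get the $2\times 2$ presentation of $\bar M=(i_*\calO_X)_P$ over $\calO_{\PP^2,P}$ from the plain-double-point hypothesis, and your observation that the conductor $\n$ is automatically an ideal of the ring $\bar M$, so that $g=-s\eta$ and $f=-t\eta$ land in $\n$, is a clean elementary substitute for the paper's appeal to the de~Jong--van~Straten duality ($\bar M\cong\Hom(\n,\n)$, the condition (R.C.)); the same remark applies verbatim in the relative setting. The genuine problem is in your relative step, and it is a circularity rather than a mere gap. To produce the elements $s,t,g,f$ you propose to first establish that $\calK_R=i_{R*}\calO_{X_R}/\calO_{D_R}$ is $R$-flat and that the conductor $\n_R$ commutes with base change, and you would deduce this from the flatness of $D_R$ over $R$ and the identification of $D_R$ with the Fitting-ideal image. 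But none of these facts is available at that point: $\operatorname{Tor}_1^R(\calK_R,k)$ is exactly the kernel of $\calO_{D_R,\xi}\otimes_Rk\ra\calO_{D,P}$, i.e.\ the failure of the schematic image to commute with passage to the closed fibre, and in the paper the flatness of $D_R$, the equality $I_{D_R}=\Fitt^0(i_{R*}\calO_{X_R})=(sf-tg)$, and the identification $\n_R=(s,t)=\Ann_{D_R}(\calK_R)$ are all \emph{consequences} of the presentation (this is precisely Corollary \ref{properties}), derived from the determinant of the lifted matrix together with the observation that $(sf-tg)$ has no embedded components. As written, your argument uses the conclusions to prove the hypothesis.

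The fix is to reverse the order, as the paper does: since $i_{R*}\calO_{X_R,\xi}$ is $R$-flat, the entire closed-fibre presentation lifts directly. In your notation, $N=\ker(A^2\ra M)$ satisfies $N\otimes_Rk\cong\ker(\bar A^2\ra\bar M)$ by flatness of $M$, so \emph{any} lifts of $(\bar g,x)$ and $(\bar f,y)$ generate $N$ by Nakayama over the local ring $A$ (using $\m_RN\subseteq\m_AN$). This yields the matrix $\left(\begin{smallmatrix}g&f\\ s&t\end{smallmatrix}\right)$ with $s,t\in\m_A$ at no cost and with no reference to $D_R$ or $\calK_R$. Only afterwards does one identify $(sf-tg)$ with the ideal of $D_R$, deduce that $\n_R=(s,t)$ is the conductor and is $R$-flat, and conclude $g,f\in\n_R$ — for which your "the conductor is an ideal of the overring" argument then works and replaces the paper's use of Proposition \ref{DucoTheo}. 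With that reordering your proof closes up; without it, the step producing $s$ and $t$ from the relative conductor is unsupported.
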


\begin{proof} 
Let $\n$  denote the maximal ideal of the local ring 
$A=\calO_{D,P}$, and set $B=i_*\calO_{X,P}$.  It follows that $\n$ equals the annihilator ideal 
$\Ann_A(B/A)$, which equals the conductor ideal 
$\{a\in A\mid aB \subseteq A\}$. The
fact that $B$ is a ring gives by Proposition \ref{DucoTheo}
that $B=\Hom_A(\n,A) $ is isomorphic to $\Hom_A(\n,\n)$.

We may assume that the plain double point $P$ is $(0:0:1)$, so the maximal ideal $\n$ is the ideal $(x,y)$.  As $P$ is a plain double point
the $A$-module $B$ is generated by two  elements, say $1$ and 
$u$. The element $u$ corresponds to an endomorphism $\mu_u\colon \n \ra \n$. We get that $\mu_u(x)=-\bar g$ and $\mu_u(y)=-\bar f$ for some $\bar f$ and 
$\bar g$ in $\n$. In other words we have that $ux+\bar g=0$ and $uy+\bar f=0$. We get a surjective map $A\oplus A \ra B$ given by
$(a_1,a_2)\mapsto a_1+a_2u$. The kernel contains the elements
$(\bar g,x)$ and $(\bar f, y)$ and they in fact generate the kernel: the image of $(a,1)$ is $a+u$, which is not zero as $u \notin A$.
We have therefore the presentation
\begin{equation}\label{pres_loc}
\oplus_{i=1}^2 A \maplongright{\begin{pmatrix} 
\bar g & \bar f\\ x & y \end{pmatrix}}  \oplus_{i=1}^2 
A \ra B \ra 0.
\end{equation}
As $B$ is a Cohen-Macaulay ${\calO}_{\PP^2,P}$-module of codimension one,
generated by $1$ and $u$, it has a free resolution of length one. Therefore we have a presentation  over ${\calO}_{\PP^2,P}$, obviously given by the same matrix 
where we use the same notation for elements in ${\calO}_{\PP^2,P}$ as for elements in $A$. The determinant
$x\bar f -y\bar g$ of the matrix defines the scheme-theoretic image 
$i(X)=D$  locally around $P$.
By assumption $X_R\ra \Spec(R)$ is flat and it follows that the presentation over $\calO_{\PP^2,P}$  can be lifted to a presentation over 
$\calO_{\PP^2_R,\xi}$, where $\xi$ is the image of $P$,  see \emph{e.g.} \cite{ArtinTata}.
Thus, there are elements $s, t, g,f$ 
 in $\calO_{\PP^2_R,\xi}$ that 
specialize to $x,y,\bar g, \bar f$ in $\calO_{\PP^2, P}$, respectively, giving us the exact sequence
\begin{equation}\label{matrix1}
\oplus_{i=1}^2 \calO_{\PP^2_R,\xi} \maplongright{\begin{pmatrix} 
g & f\\ s & t \end{pmatrix}}  \oplus_{i=1}^2 
\calO_{\PP^2_R,\xi} \ra  i_{R*}\calO_{X_R,\xi}\ra 0.
\end{equation}

The ideal $I$ of the schematic image $D_{R}\subseteq \PP^2_R$
in $\calO_{\PP^2_R,\xi}$ is given by the  kernel of the ring homomorphism $\calO_{\PP^2_R,\xi}\ra \calO_{X_R,\xi}$. It contains 
$\Fitt^0_{}(i_{R*}{\calO}_{X_R,\xi})$, which is the
principal ideal $(sf-gt)$. As the map $i_R$ is generically an embedding,
any other element in the kernel would have to be supported at $\xi$, but as the ideal $(sf-gt)$ has no embedded
components the equality of ideals $I=(sf-tg)$ follows.
So the matrix  appearing in \eqref{matrix1}  also defines a presentation 
of $i_{R*}\calO_{X_R,\xi}$ as $\calO_{D_R,\xi}$-module
and the ideal $\n_R= (s,t)$ is the annihilator ideal $\n_R=\Ann_{D_R}(i_{R*}\calO_{X_R,\xi}/\calO_{D_R,\xi})$. Furthermore, as the elements $s$ and $t$ specialize to $x$ and $y$ we have that $\calO_{\PP^2,\xi}/\n_R=R$. Thus $\n_R$ is flat. By the condition
(R.C.) the  element $u\in i_{R*}\calO_{X_R,\xi}$ corresponds to an endomorphism $\mu_u:\n_R\ra\n_R$, with $\mu_u(s)=-g$ and $\mu_u(t)=-f$.  This means 
that $g,f\in \n_R$. 
\end{proof}

\begin{cor}\label{properties} 
Let $I$ be the defining ideal of $\Spec(\calO_{D_R,\xi})$ in $\Spec({\calO}_{\PP^2_R,\xi})$.
Then we have that
\begin{enumerate}
\item The ideal $I=\Fitt^0_{}(i_{R*}{\calO}_{X_R,\xi})$. In particular, the ideal $I$ is  principal and we have an inclusion of ideals $I \subseteq \n_R^2$.
\item The $R$-algebra $\calO_{D_R,\xi}$ is flat.
\item The scheme defined by the annihilator ideal $\Ann (i_{R*}\calO_{X_R}/\calO_{D_R})_{\xi}$ in  $\Spec(\calO_{D_R,\xi})$ is isomorphic to $\Spec R$.
\end{enumerate}
\end{cor}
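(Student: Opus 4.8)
The plan is to deduce all three statements directly from the presentation \eqref{pres_R} produced in Proposition \ref{sing section}; only the flatness in (2) needs a genuinely new argument. For (1), I would argue that since \eqref{pres_R} presents $i_{R*}\calO_{X_R,\xi}$ by a square $2\times 2$ matrix, its zeroth Fitting ideal is the principal ideal generated by the determinant $gt-fs$. The proof of Proposition \ref{sing section} already identifies $(gt-fs)=(sf-gt)$ with the ideal $I$ of $D_R$, so $I=\Fitt^0(i_{R*}\calO_{X_R,\xi})$ and $I$ is principal. For the inclusion $I\subseteq\n_R^2$, note that by Proposition \ref{sing section} all four entries $s,t,f,g$ lie in $\n_R$; hence each product $gt$ and $fs$ lies in $\n_R^2$, and therefore so does their difference, a generator of $I$.

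The main point, and the one step I expect to require care, is the flatness in (2). By (1) we have $\calO_{D_R,\xi}=\calO_{\PP^2_R,\xi}/(\delta)$ with $\delta=gt-fs$, and $\calO_{\PP^2_R,\xi}$ is flat over $R$. I would invoke the standard criterion (the local criterion of flatness, in the guise of relative effective Cartier divisors): the quotient of an $R$-flat local algebra by one element is again $R$-flat provided that element restricts to a nonzerodivisor on the closed fibre $\overline{\calO}=\calO_{\PP^2,P}$. The restriction of $\delta$ is the determinant $x\bar f-y\bar g$, which by the proof of Proposition \ref{sing section} cuts out the plane cubic $D$ near $P$ and is in particular nonzero. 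As $\calO_{\PP^2,P}$ is a two-dimensional regular local ring, hence a domain, this nonzero element is automatically a nonzerodivisor, so the criterion applies and $\calO_{D_R,\xi}$ is $R$-flat. Verifying this fibrewise condition is essentially the whole difficulty; the rest is formal.

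Finally, for (3), Proposition \ref{sing section} identifies the annihilator ideal $\Ann(i_{R*}\calO_{X_R}/\calO_{D_R})_{\xi}$ with $\n_R$, so the subscheme it cuts out in $\Spec(\calO_{D_R,\xi})$ is $\Spec(\calO_{D_R,\xi}/\n_R)$. Since $I\subseteq\n_R^2\subseteq\n_R$ by (1), the quotient $\calO_{D_R,\xi}/\n_R$ equals $\calO_{\PP^2_R,\xi}/\n_R$, which Proposition \ref{sing section} identifies with $R$ (the elements $s,t$ specialising to $x,y$ exhibit $\n_R$ as cutting out a section). Hence the subscheme is isomorphic to $\Spec R$, as claimed.
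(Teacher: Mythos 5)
Your proposal is correct and follows essentially the same route as the paper, whose proof of this corollary simply defers to the proof of Proposition \ref{sing section}: the identification $I=(sf-tg)=\Fitt^0(i_{R*}\calO_{X_R,\xi})$, the membership $s,t,f,g\in\n_R$ giving $I\subseteq\n_R^2$, and the identification of the annihilator with $\n_R$ and of $\calO_{\PP^2_R,\xi}/\n_R$ with $R$ are all taken verbatim from that proof. The one point you make explicit that the paper leaves implicit is the flatness of $\calO_{D_R,\xi}$ via the slicing criterion (the determinant restricting to a nonzerodivisor on the closed fibre $\calO_{\PP^2,P}$), and that argument is correct.
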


\begin{proof} All three assertions are established in the proof of the above proposition. 
\end{proof}

\begin{rem} 
The Ring Condition (R.C.) for the ideal $\n$ or $\n_R$
is equivalent to the condition that the entries  of the first row of the matrix in \eqref{pres_loc} or \eqref{pres_R}
lie in the ideal $\n$, respectively $\n_R$. This is the content in this 
special case of Catanese's \textit{Rank Condition} (R.C.) \cite{Catanese};
for the terminology see also \cite[Remark 1.13]{DucoTheo}.
\end{rem}

\section{Singular sections of cubics}\label{sec:singular sections cubics}

We give in this section  an explicit description of the space of CM-curves in the 
plane having Hilbert polynomial $3t+1$. The space of such CM-curves is identified 
with plane cubics together with a singular section.

\subsection{Critical locus}  Let $\varphi \colon X \ra S$ be a flat morphism of schemes, of pure relative  dimension $d$. The critical locus of the morphism $\varphi$ is the closed subscheme $C(\varphi)\subseteq X$ given by the ideal sheaf $\Fitt^d(\Omega^1_{X/S})$, where $\Omega^1_{X/S}$ is the sheaf of differentials, see \cite{Teissier}. A section $\sigma \colon S \ra X$ of the morphism $\varphi \colon X\ra S$ is a {\em singular section} if it factorizes through the critical locus $C(\varphi)$. 
Thus, in the commutative diagram 
\[
\begin{tikzcd}
 C(\varphi) \arrow[r,hook] & X \arrow[d, "\varphi"] \\ 
S \arrow[u,dashrightarrow] \arrow[r,"="] \arrow[ur,"\sigma"]& S
\end{tikzcd}
\]
the singular section is represented by a dashed arrow. 

\begin{ex}\label{Kahler diff} Let $R$ be a ring, and let $f\in R[x_1, \ldots, x_n]$ be an element of the polynomial ring defining a flat family $X$ of hypersurfaces over $\Spec(R)=S$. One then computes, see \emph{e.g.} \cite[Example 1, p. 588]{Teissier} that the $(n-1)^\mathrm{th}$ Fitting ideal of $\Omega_{X/S}$ is generated by the partial derivatives of $f$. Thus we have that the ideal of the critical locus is
\[ 
\left(f, \frac{\partial f}{\partial x_1}, \ldots, \frac{\partial f}{\partial x_n}\right)\subseteq R[x_0, \ldots, x_n].
\] 
\end{ex}

\subsection{Singular cubics} Let $SC$ denote the functor parametrising cubics in $\PP^2$ with a singular section. That is, the $S$-valued points of $SC$ are pairs $(D,\sigma)$ where $D\subseteq \PP^2_S=\PP^2\times S$ is a flat family of cubics over $S$, and where $\sigma \colon S\ra D$ is a singular section.

\begin{lemma} The critical locus $C(\varphi)$ of the universal family of cubics 
$\varphi\colon Z\subseteq \PP^2\times \operatorname{Hilb}^{3t}_{\PP^2} \ra \operatorname{Hilb}^{3t}_{\PP^2}$
represents the functor $SC$. The scheme $C(\varphi)$ is smooth, projective and of dimension 8.
\end{lemma}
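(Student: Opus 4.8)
The plan is to prove two things: first that the critical locus $C(\varphi)$ represents the functor $SC$, and second that $C(\varphi)$ is a smooth projective scheme of dimension $8$. For the representability, I would argue directly from the universal properties. The scheme $\operatorname{Hilb}^{3t}_{\PP^2}$ is the projective space $\PP^9 = \PP(H^0(\PP^2,\calO(3)))$ parametrising cubic forms, and $Z \subseteq \PP^2 \times \PP^9$ is the universal cubic, which is flat of relative dimension $1$ over $\PP^9$. Since the formation of $\Omega^1_{Z/\operatorname{Hilb}}$ commutes with base change and Fitting ideals commute with base change, the critical locus $C(\varphi) = V(\Fitt^1(\Omega^1_{Z/\operatorname{Hilb}}))$ has the following universal property: for any $S$, an $S$-point of $C(\varphi)$ is a morphism $S \to \operatorname{Hilb}^{3t}_{\PP^2}$ together with a section of the pulled-back family landing in the pulled-back critical locus. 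By Example \ref{Kahler diff}, the critical locus of a family of cubics is exactly the locus cut out by the cubic and its partials, i.e.\ the singular locus of the fibres. Unwinding this, an $S$-point of $C(\varphi)$ is precisely a flat family of plane cubics $D \subseteq \PP^2_S$ together with a section $\sigma\colon S \to D$ factoring through the critical locus $C(\varphi_D)$ --- which is the definition of an $S$-point of $SC$. Thus $C(\varphi)$ represents $SC$.

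For the dimension and smoothness, the cleanest route is to view $C(\varphi)$ through its projection to $\PP^2$ rather than to $\PP^9$. I would consider the incidence correspondence
\[
C(\varphi) = \{ (P, [f]) \in \PP^2 \times \PP^9 \mid f(P) = 0,\ \tfrac{\partial f}{\partial x_0}(P) = \tfrac{\partial f}{\partial x_1}(P) = \tfrac{\partial f}{\partial x_2}(P) = 0 \}.
\]
By the Euler relation $\sum x_i \partial f/\partial x_i = 3f$ (here one must be careful about characteristic $3$, but the three partial-derivative conditions together with the locus still define the singular points correctly, and one can check the count survives), fixing a point $P \in \PP^2$, the conditions on $[f]$ are that the cubic $f$ be singular at $P$. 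These are linear conditions on the $10$-dimensional space of cubic forms: vanishing of $f$ and its partials at a fixed point. I would show that over each fixed $P$ these conditions cut out a linear subspace of the expected codimension $3$ in $\PP^9$ (the vanishing of the three partials at $P$, the vanishing of $f$ being implied in most characteristics by Euler, or imposed as a genuinely independent condition that is absorbed into the count). Hence the fibre of the projection $C(\varphi) \to \PP^2$ over each point is a projective space $\PP^6$, of dimension $6$. Since $\PP^2$ is smooth of dimension $2$ and the projection is a projective bundle (a fibration with constant fibre $\PP^6$ arising from a vector bundle on $\PP^2$), the total space $C(\varphi)$ is smooth, irreducible, and projective of dimension $2 + 6 = 8$.

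To make the projective bundle structure precise, I would exhibit the subbundle $\calE \subseteq H^0(\calO(3)) \otimes \calO_{\PP^2}$ whose fibre over $P$ is the space of cubics singular at $P$, and identify $C(\varphi)$ with $\PP(\calE)$ over $\PP^2$. Verifying that $\calE$ is a vector bundle (i.e.\ that the rank of the singularity conditions is constant, equal to $3$, independent of $P$) is the crux: one must check that imposing a singularity at any given point is always exactly $3$ independent linear conditions on cubics. This is where the characteristic-$2$ and $3$ subtleties live, since the relation between the partials, the Euler relation, and the value $f(P)$ degenerates in small characteristic. I expect this constant-rank verification --- showing the conditions defining a singular point of a cubic are always of corank giving fibre dimension exactly $6$, uniformly in $P$ and in all characteristics --- to be the main obstacle. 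Once constancy of rank is established, smoothness and the dimension count follow formally from the projective bundle structure, and projectivity is immediate since $C(\varphi)$ is a closed subscheme of $\PP^2 \times \PP^9$.
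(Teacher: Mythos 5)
Your proof is correct. The representability half is essentially the paper's own argument: both unwind an $S$-point of the closed subscheme $C(\varphi)\subseteq\PP^2\times\operatorname{Hilb}^{3t}_{\PP^2}$ into a flat family of cubics plus a section factoring through its critical locus, using that the critical locus is cut out by the cubic and its partials (Example 4.2) and that its formation commutes with base change. For smoothness and the dimension count you diverge: the paper stays with the projection to $\operatorname{Hilb}^{3t}_{\PP^2}=\PP^9$ and simply asserts that ``local calculations'' show the three equations cut $\PP^2\times\PP^9$ down to a smooth scheme of dimension $11-3=8$, whereas you project to $\PP^2$ and exhibit $C(\varphi)$ as $\PP(\calE)$ for a corank-$3$ subbundle $\calE\subseteq H^0(\calO(3))\otimes\calO_{\PP^2}$. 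Your route is more structural and arguably yields more (irreducibility and rationality come for free from the bundle structure), at the cost of having to verify constancy of rank; the paper's route is shorter but leaves the Jacobian check implicit. One reassurance on the step you flag as the crux: the rank-constancy is immediate and characteristic-free, since $PGL_3$ acts transitively on $\PP^2$ and at $P=(0{:}0{:}1)$ the conditions $f(P)=\partial_xf(P)=\partial_yf(P)=0$ (which generate the critical ideal in the chart $w=1$; note $\partial_wf\in(f,\partial_xf,\partial_yf)$ by the Euler relation in every characteristic) are exactly the vanishing of the coefficients of $w^3$, $xw^2$ and $yw^2$ --- three visibly independent linear conditions on the ten coefficients of $f$, so every fibre is a $\PP^6$ and $\calE$ has constant rank $7$.
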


\begin{proof} Let $S$ be a scheme, and let $s\colon S\ra C(\varphi)$ be a morphism. The morphism $s$ is determined by a pair $(\sigma, t)$, where $\sigma \colon S\ra \PP^2$ and $t\colon S\ra \operatorname{Hilb}^{3t}_{\PP^2}$ are morphisms that together factorize through the closed subscheme 
$C(\varphi)\subseteq \PP^2\times \operatorname{Hilb}^{3t}_{\PP^2}$. The morphism $t\colon S\ra \operatorname{Hilb}^{3t}_{\PP^2}$ is equivalent with having a cubic $D_S\subseteq \PP^2_S=\PP^2\times S$, flat over $S$. The morphism $\sigma \colon S \ra \PP^2$ is the same as having a section of $\PP^2_S \ra S$. Now, as our pair $(\sigma, t)$ is a point of the critical locus $C(\varphi)$ it means that the partial derivatives of the cubic vanish over $\sigma$. In other words the section $\sigma \colon S\ra \PP^2_S$ factors through the critical locus of the cubic $D_S\ra S$. And conversely, given a flat family $D\subseteq \PP^2_S$  of cubics over $S$ with singular section
$\sigma \colon S\ra D\subseteq \PP^2_S$ we obtain  morphisms  $\sigma \colon S\ra \PP^2$ and  $t\colon S\ra \operatorname{Hilb}^{3t}_{\PP^2}$ 
that together factorize through $C(\varphi)$.

From  Example \ref{Kahler diff} we get that the ideal of the critical locus is locally defined by the partial derivatives of the cubic. It follows from local calculations that $C(\varphi)$ is smooth of dimension $11-3=8$. 
\end{proof}

\begin{prop}\label{CM i P2}The functor $\CM^{3t+1}_{\PP^2}$ of CM-curves in $\PP^2$ having Hilbert polynomial $3t+1$  is isomorphic to the functor $SC$ of cubics 
with singular section. In particular $\CM^{3t+1}_{\PP^2}$ is represented by the scheme $C(\varphi)$, given as the critical locus of the universal family of cubics in the plane.
\end{prop}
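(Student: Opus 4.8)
The plan is to construct mutually inverse natural transformations between $\CM^{3t+1}_{\PP^2}$ and $SC$, and then to invoke the preceding lemma, which identifies $SC$ with the critical locus $C(\varphi)$. A useful preliminary observation is that a CM-curve $i\colon X\ra \PP^2$ with Hilbert polynomial $3t+1$ can never be a closed immersion, since a plane cubic has Hilbert polynomial $3t$. Hence by Proposition \ref{not immersion} the image is always a plane cubic and the unique non-isomorphism point is a plain double point (the length of $\calK$ equals $1$), so we are precisely in the setting of Proposition \ref{sing section}.

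For the forward transformation I would send a family $(X_R,i_R)$ over a local ring $R$ to the pair $(D_R,\sigma)$, where $D_R=V(\Fitt^0(i_{R*}\calO_{X_R}))$ is the Fitting-ideal image and $\sigma$ is the section cut out by the conductor $\n_R=\Ann_{D_R}(\calK)$. By Corollary \ref{properties} the scheme $D_R$ is a flat family of cubics, the section $\sigma$ is isomorphic to $\Spec R$, and the principal ideal $I=\Fitt^0(i_{R*}\calO_{X_R})$ is contained in $\n_R^2$; writing $I=(F)$ this means $F\in\n_R^2$, so $F$ and its partial derivatives lie in $\n_R$. By Example \ref{Kahler diff} the section then factors through the critical locus, i.e.\ $\sigma$ is a singular section and $(D_R,\sigma)\in SC(\Spec R)$. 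Since the Fitting ideal and the annihilator both commute with base change, this assignment is compatible with pullbacks and glues to a natural transformation over a general base $S$.

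For the backward transformation I would reconstruct the curve from $(D_R,\sigma)$ using the duality of de Jong and van Straten. Let $A=\calO_{D_R,\xi}$ at the point $\xi=\sigma(\Spec R)$; it is Gorenstein and $R$-flat, and the ideal $\n_R$ of $\sigma$ is a fractional MCM ideal, since $D_R$ has pure relative dimension one and so $\n_R$ contains a nonzerodivisor. I set $B=\Hom_A(\n_R,\n_R)$ locally at $\xi$ and $B=\calO_{D_R}$ away from $\sigma$, glue these to a sheaf of $\calO_{D_R}$-algebras, and let $X_R$ be the corresponding finite $D_R$-scheme with $i_R\colon X_R\ra D_R\subseteq\PP^2_R$. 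To see that $B$ is a ring I use that $\sigma$ is singular: by Example \ref{Kahler diff} the equation $F$ of $D_R$ and its partial derivatives lie in $\n_R$, which forces $F\in\n_R^2$, and writing $F=sf-tg$ with $f,g\in\n_R$ produces a presentation of $B$ whose first row lies in $\n_R$. By the characterisation recalled after Corollary \ref{properties} this is exactly the Ring Condition (R.C.), so Proposition \ref{DucoTheo} gives that $B$ is a ring and maximal Cohen-Macaulay. Consequently $X_R\ra \Spec R$ is flat with Cohen-Macaulay fibres and $i_R$ is finite and generically an isomorphism; moreover the presentation shows $B/A$ is cyclic and annihilated by $\n_R$, hence of length exactly one. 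Thus $(X_R,i_R)$ is a CM-curve with a plain double point and Hilbert polynomial $3t+1$.

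It remains to check that the two transformations are mutually inverse and to identify the representing object. Starting from $(X_R,i_R)$, Proposition \ref{sing section} identifies $i_{R*}\calO_{X_R}$ with $\Hom_A(\n_R,\n_R)$ and $\n_R$ with the conductor, so the backward map returns $(X_R,i_R)$; starting from $(D_R,\sigma)$, Corollary \ref{properties} shows that the Fitting image of $i_R$ is $V(F)=D_R$ and that $\Ann_{D_R}(\calK)=\n_R$ recovers $\sigma$. This gives the equivalence $\CM^{3t+1}_{\PP^2}\cong SC$, and the preceding lemma then yields representability by $C(\varphi)$. The step I expect to be the main obstacle is the backward direction: showing that $B=\Hom_A(\n_R,\n_R)$ genuinely defines a CM-curve in families --- flatness over $R$, the correct Hilbert polynomial, and in particular that $\calK$ has length exactly one even for the most degenerate cubics, such as a triple line or three concurrent lines whose image carries a genuine triple point rather than a node. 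It is precisely here that the presentation coming from $F\in\n_R^2$, together with the de Jong and van Straten duality, pins the length of $\calK$ to one and supplies the ring structure and the MCM property in families.
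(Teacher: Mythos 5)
Your proof is correct in substance and follows the same overall skeleton as the paper (forward map via schematic image and conductor, backward map reconstructing the double structure, mutual inverses via the de~Jong--van~Straten duality), but the backward transformation is organised genuinely differently. The paper's $\theta$ is explicit: from $Q\in\n_R^2$ it writes down the matrix factorisation \eqref{matrix M}, takes the cokernel as the new structure sheaf, and observes that the resulting relations are the maximal minors of the $2\times 3$ matrix \eqref{matrix minors}, so that $X_U$ is a determinantal arithmetically Cohen--Macaulay curve in $\PP^3_U$; flatness over $U$, the Hilbert polynomial $3t+1$, and the plain double point are then immediate, and Proposition \ref{DucoTheo} is only invoked at the very end to prove $\theta\circ\phi=\mathrm{id}$. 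You instead \emph{define} the curve abstractly as $\mathscr{E}\mathrm{nd}_{\calO_{D_R}}(\calN_R)$ and use the duality up front to get the ring structure and the MCM property. This is a legitimate alternative and arguably cleaner globally (no gluing over a cover of $S$ is needed), but note that two of your steps secretly require the paper's explicit computation anyway: (i) verifying (R.C.) for $\n_R$ via the remark after Corollary \ref{properties} means exhibiting a presentation of the dual module whose first row lies in $\n_R$, and producing that presentation from $F\in\n_R^2$ \emph{is} the matrix factorisation $F=sf-tg$ with $f,g\in\n_R$; and (ii) your claim that $B/A$ has length \emph{exactly} one (rather than zero) needs the observation that all four entries of that presentation matrix lie in $\n_R$, so the dual of $\n_R$ requires two generators and the new generator $u$ is genuinely not in $A$ --- this is also where the Hilbert polynomial $3t+1$ comes from, so it should be said explicitly rather than deferred to the closing caveat. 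With those two points spelled out, your argument is complete, and the final identification with $C(\varphi)$ via the preceding lemma matches the paper.
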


\begin{proof}
First we define a morphism $\phi\colon \CM^{3t+1}_{\PP^2} \ra SC$. Given a scheme $S$ and $i_S\colon X_S\ra \PP^2_S$ an $S$-valued point of $\CM^{3t+1}_{\PP^2}$, we let $D_S\subseteq \PP^2_S$ be the schematic image of $X_S$ and $N_S\subseteq D_S$ the subscheme defined by the annihilator of $i_{S*}\calO_{X_S}/\calO_{D_S}$. Let $R$ be the local ring of a closed point of $S$
and denote by $i\colon X\ra \PP^2$ the curve over this closed point. By Proposition \ref{not immersion} the image $D=i(X)$ is a cubic curve and there is one unique point $P\in D$ where the induced map $i\colon X\ra D$ is not an isomorphism. The point $P$ is a plain double point. 
By Corollary \ref{properties} we have that the schematic image $D_R\subseteq \PP^2_R$ is flat over $R$, and that the subscheme $N_R\subseteq D_R$ defined by the annihilator of $i_{R*}\calO_{X_R}/\calO_{D_R}$, determines a singular section of $D_R\ra \Spec(R)$.  
Thus $(D_S, N_S)$ is a flat family of cubics in $\PP^2$  with a singular section.

Next we define a morphism $\theta \colon SC \ra \CM^{3t+1}_{\PP^2}$. Let $(D_S, N_S)$ be a flat family of cubics in $\PP^2$  with a singular section. We have that $D_S$ is given by a  cubic form $Q \in \calO_S[x,y,w]$. Consider an open affine set $U\subset S$ on which the section $N_U$ is given by two linear independent forms $s$ and $t$.  As the section is singular we have that the cubic  form $Q\in  \calO_U[x,y,w]$ can be written as
$Q=s^2f_1+st(f_2-g_1)-t^2g_2$.
Consider now a matrix factorisation of $Q$
\begin{equation}\label{matrix M}
M=\begin{pmatrix} 
g_1 s +g_2 t & f_1 s+f_2t\\
s & t
\end{pmatrix}.
\end{equation}
We can view the matrix $M$ as the presentation matrix of a sheaf $\calF_U$ on $\PP^2_U$. That is, we have the global presentation
\[
\oplus_{i=1}^2
\calO_{\PP^2_U}(-2)  \mapright M
\calO_{\PP^2_U}\oplus \calO_{\PP^2_U}(-1) 
\ra \calF_U \to 0.
\]
With generators $1,u$ of $\calF_U$ we have the
relations  $(u+g_1)s+g_2t=0$ and $f_1s+(u+f_2)t=0$.
By formally eliminating $s$ and $t$ we obtain a third relation
$(u+f_2)(u+g_1)=g_2f_1$. 
These relations can be written as the maximal minors of the matrix
\begin{equation}\label{matrix minors}
\begin{pmatrix} s & -g_2 & u+f_2 \\ t & u+g_1 & -f_1 \end{pmatrix}.
\end{equation}
The maximal minors  define an  arithmetically
Cohen-Macaulay curve $X_U\subset \PP^3_U=U\times \Proj (k[x,y,u,w])$, flat over $U$.  For each point in $U$, the fiber is a curve with Hilbert polynomial  $3t+1$, and the curve does not pass through the point  $(0:0:1:0)$. Projection from  $(0:0:1:0)$ induces 
a map $i_U\colon X_U \ra \PP^2_U$, 
that is an isomorphism onto its image $D_U$ outside the section $N_U$.

A different choice of the sections
$s$ and $t$ and of the matrix factorization gives a curve 
isomorphic to $X_U$.
The image of $X_U$ in $\PP^2_U$ and the section $N_U$ are independent of these choices,
and it follows that the curves defined for different open sets $U\subseteq S$ glue together to
a curve $i_S\colon X_S \ra \PP^2_S$, and thus an $S$-valued point  of $\CM^{3t+1}_{\PP^2}$.

 We next  show  that the two morphisms constructed are inverse of each other. For this we may assume that the base is the spectrum of a local ring $R$.
 Let $(D_R, N_R)$ be a flat family of cubics with a singular section. Let $i_R \colon X_R \ra \PP^2_R$ be the CM-curve  
 we get by applying the morphism $\theta$ to the pair $(D_R, N_R)$.  From the construction we have that $\phi \circ \theta$ is the identity, and we verify that $\theta \circ \phi$ is the identity. 
 Outside the section $N_R$ the two curves $X_R$ and $D_R$ are isomorphic, so in particular $X_R \setminus i_R^{-1}(N_R)$ is determined by the pair $(D_R,N_R)$.  
 Over the closed fiber we have that the non-isomorphism locus is one point $P\in \PP^2$, and we let $\xi \in \PP^2_R$ be the image of $P$ under the natural inclusion. 
 By Proposition \ref{sing section} we have that the ideal $\n_R$ of the section $N_R \subset D_R$ is contained in the ideal defining $\xi$. 
 Let $A=\calO_{R,\xi}$ and $B=(i_{R*}\calO_{X_R})_{\xi}$. The last statement of Proposition \ref{sing section} tells us  that the ideal $\n_{R}$ is the annihilator ideal $\operatorname{Ann}_{A}(B/A)$. 
 Hence $\n_R$ is the dual module $\operatorname{Hom}_A(B,A)$. By Proposition \ref{DucoTheo} we then have that the $A$-algebra $B$ is the endomorphism ring $\operatorname{Hom}_A(\n_R, \n_R)$.
 Thus the structure sheaf $i_{R*}\calO_{X_R}$ is determined, up to isomorphism, by the pair $(D_R,N_R)$. It follows that  $\theta \circ \phi$ is the identity.
 \end{proof}

\begin{rem} The proof could have been shortened using Zariski Main Theorem.  We have chosen to give an explicit proof that we believe is more illuminating.
\end{rem}

\begin{rem}There is a natural map from $C(\varphi)$ to $\operatorname{Hilb}^{3t}_{\PP^2}=\PP^9$, obtained by composing the closed immersion with the projection. This is a map from the space of cubics with a singular section $SC$ to the  Hilbert scheme forgetting the singular section. The image will be the set of singular cubics in the plane.  Outside the locus of cubics with multiple components the map is the normalization morphism, see \cite[Theorem 5.5.1]{Teissier}. Over the set corresponding to multiple components the map is a proper modification.
\end{rem}

\subsection{Remarks on conics} It turns out that the situation with CM-curves in $\PP^2$ having Hilbert polynomial $2t+2$ can be treated quite analogously to the situation with twisted cubics. For readability we did not merge those similar arguments in the above text. We claim that they show that the space $\CM^{2t+2}_{\PP^2}$ of CM-curves in the plane having Hilbert polynomial $2t+2$ is isomorphic to the space of plane conics with a singular section.

\section{The space of twisted cubics}
This section contains our main contribution that identifies 
the space of CM-curves in $\PP^3$ having Hilbert polynomial $3t+1$ with one 
specific component of the Hilbert scheme of twisted cubics in $\PP^3$.
We also treat the case $n>3$.

\begin{lemma}\label{Fittingflat}
Let $i_R\colon X_R\ra \PP^3_R$ be an element of $\CM_{\PP^3}^{3t+1}(\Spec(R))$, where $R$ is a local ring. Suppose that the induced map $i\colon X \ra \PP^3$ over the closed
point in $R$ is not a closed immersion. Then the closed subscheme $Z\subseteq \PP^3_R$ defined by the $0^\mathrm{th}$ Fitting ideal sheaf
of $i_{R*}{\calO}_{X_R}$ is flat over $\Spec(R)$, with Hilbert polynomial $3t+1$.
\end{lemma}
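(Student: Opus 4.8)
The plan is to reduce the whole statement to a computation at the one point where $i$ fails to be an isomorphism, and to obtain flatness by exploiting that the zeroth Fitting ideal commutes with base change. First I would record this base change property: for every point $s\in\Spec(R)$ the fibre $Z_s\subseteq\PP^3_{\kappa(s)}$ is cut out by $\Fitt^0(i_{s*}\calO_{X_s})$. Away from the finitely many points where $i_s$ is not an isomorphism onto its image the Fitting ideal coincides with the annihilator ideal, so $Z_s$ agrees there with the schematic image; the only delicate behaviour sits over the plain double point supplied by Proposition \ref{not immersion}.

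Next I would work out the local model at the plain double point $P$ over a field, using the presentation of $i_*\calO_X$ from Proposition \ref{sing section}. Writing $A=\calO_{\PP^3,P}$ and arranging the plane cubic $D=i(X)$ to lie in $\{z=0\}$, the module $B=(i_*\calO_X)_P$ is generated by $1$ and $u$, the coordinate $z$ annihilates $B$, and the two relations $g+su=0$ and $f+tu=0$ with $f,g\in\n=(s,t)$ produce the presentation matrix $\begin{pmatrix} g & f & z & 0\\ s & t & 0 & z\end{pmatrix}$. Its $2\times2$ minors give $\Fitt^0(B)=(sf-gt,\,sz,\,tz,\,z^2)$, and comparing with the ideal $(z,\,sf-gt)$ of $D$ yields a short exact sequence $0\to k\to\calO_Z\to\calO_D\to0$. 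Thus $Z$ is the plane cubic $D$ with a single embedded point at $P$ sticking out of the plane, and $\chi(\calO_Z(m))=\chi(\calO_D(m))+1=3m+1$. On a fibre where $i_s$ is instead a closed immersion one has $Z_s=X_s$, since for a closed immersion $\Fitt^0(\calO_{X_s})$ is the ideal of $X_s$; in either case the fibre has Hilbert polynomial $3t+1$.

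The remaining and genuinely delicate point is flatness of $Z$ over $\Spec(R)$, which I expect to be the main obstacle: on a fibre where $i_s$ is a closed immersion the embedded point is absent, so the ``cubic plus embedded point'' picture of the special fibre does not spread out as a flat family, and there is no $R$-flat splitting of $\calO_Z$ to exploit. Rather than argue directly over the possibly non-reduced ring $R$, I would pass to the completion $\hat R$ (flatness descends along the faithfully flat map $R\to\hat R$) and use the versal deformation of the closed fibre $(X,i)$. By Proposition \ref{smooth} the completed local ring $R^v$ of this versal deformation is a regular, hence integral, local ring, and there is a local homomorphism $R^v\to\hat R$ inducing $(X_{\hat R},i_{\hat R})$. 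Over $\Spec(R^v)$ the fibrewise computation above shows that the universal Fitting scheme $Z^v$ has the constant Hilbert polynomial $3t+1$ on every fibre; since the base is integral, this forces $Z^v$ to be flat over $\Spec(R^v)$. Because $\Fitt^0$ commutes with base change, $Z_{\hat R}$ is the pullback of $Z^v$ along $R^v\to\hat R$ and is therefore flat over $\hat R$; descending along $R\to\hat R$ gives flatness of $Z$ over $\Spec(R)$, with the Hilbert polynomial $3t+1$ already computed.
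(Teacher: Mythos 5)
Your argument is correct, but it takes a genuinely different route from the paper's. The paper proves flatness by brute force over $R$ itself: after normalizing coordinates via Propositions \ref{regular} and \ref{CM i P2}, it writes down an explicit $2\times 4$ presentation matrix \eqref{flat presentation} for $i_{R*}\calO_{X_R}$, computes the Fitting ideal $(Q,F_1,F_2,F_3)$, and checks directly from the leading terms of the $F_i$ that the degree-$n$ graded piece of the quotient is locally free of rank $3n+1$. You instead compute only the fibres (your local model at the plain double point is exactly the paper's matrix with $\beta=0$, and your identification $\Fitt^0(B)=(sf-gt,sz,tz,z^2)$ and the length-one kernel $0\to k\to\calO_{Z_s}\to\calO_{D_s}\to 0$ are correct), and then obtain flatness softly by descending along $R\to\hat R$, factoring through the completed local ring $R^v$ of $\CM^{3t+1}_{\PP^3}$ at $(X,i)$, and invoking the constant-Hilbert-polynomial criterion over the integral base $\Spec(R^v)$. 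This is legitimate and non-circular: Proposition \ref{smooth} and the representability theorem of H\o nsen--Heinrich precede the lemma and do not depend on it, so $R^v$ is indeed a power series ring and the family over $\hat R$ is pulled back from it (passing to $\hat R$ first is also what lets the map into the algebraic space factor through an \'etale chart). Two small points you should make explicit. First, at a non-closed point $s$ of $\Spec(R^v)$ the residue field $\kappa(s)$ is not algebraically closed, whereas Propositions \ref{not immersion} and \ref{sing section} are stated over algebraically closed fields; you must base change to $\overline{\kappa(s)}$, compute the Hilbert polynomial there, and use that Fitting ideals and Hilbert polynomials are insensitive to this field extension. Second, you should justify that the four displayed columns generate all relations between $1$ and $u$ over $\calO_{\PP^3,P}$ (reduce an arbitrary relation modulo $z$ and use the $\calO_{\PP^2,P}$-presentation); otherwise $\Fitt^0$ could a priori be larger. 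What your approach buys is independence from the explicit coordinate gymnastics of the paper's proof and a template that would adapt to other Hilbert polynomials; what the paper's computation buys is the explicit presentation \eqref{flat presentation}, which is reused verbatim in the surjectivity argument of Theorem \ref{main} and in the discussion of higher codimension.
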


\begin{proof} The image of the CM-curve $i\colon X\ra \PP^3$ over the closed fiber is by Proposition \ref{not immersion} a plane cubic. We may assume that the image lies in the plane $\Pi=\{z=0\} \subset \PP^3 =\Proj (k[x,y,z,w])$ and that the
plain double point is $(0:0:0:1)$. 
From Proposition \ref{regular} it follows that we may assume that the curve $X$ lies in $\PP^3 =\Proj(k[x,y,u,w])$ and that the morphism $i$ is induced by the linear map that sends $(x,y,z,w)$ to  $(x,y,0,w)$. In particular we may assume that the curve 
$X$ does not pass through the point $(0:0:1:0)$. 
We compose the map $i_R\colon X_R\ra \PP^3_R$ with the rational projection  $\PP_R^3\dashrightarrow \Pi_R$. 
This
provides us with an element of $\CM^{3t+1}_{\Pi}(R)$ and the
image of $X_R$ is by Proposition \ref{CM i P2} a cubic $D_R\subset \Pi_R$ together with a singular section $N_R$. After a change of coordinates we may assume that the section is given by $s=x$ and $t=y$. 
Then the maximal minors of \eqref{matrix minors}   (with $f_2=0$) that generate the ideal  of $X_R$ in $\PP^3 =\Proj(k[x,y,u,w])$ are
\[ (xu+g, yu+f, u(u+g_1)-f_1g_2), \]
where $g=g_1x+g_2y$ and $f=xf_1$. The cubic $D_R\subset \Pi_R$ is given by $Q=f_1x^2-g_1xy-g_2y^2 \in R[x,y,w]$.
The map $i_R\colon X_R\ra \PP^3_R$ is then  obtained from the linear map $i^*$ that sends $z$ to $\alpha w+\beta u+ \gamma y + \delta x$
with $\alpha$, $\beta$, $\gamma$ and $\delta$   in the maximal ideal of $R$. By a coordinate change in the image, making $z-\alpha w - \gamma y - \delta x$ the new $z$-coordinate we bring the map into the
final form: $i^*$ sends $z$ to $\beta u$ and is the identity on $x,y$ and $w$.

We now compute a presentation of $X_R$ over $\PP^3_R$. 
 In \eqref{matrix M} we have a presentation of $X_R$ over $\PP^2_R$. We identify the plane with the subscheme in $\PP^3_R$ defined by $z=0$. We have that $i_*\calO_{X_R}$ is generated by  two elements $1$ and $u$ over $\calO_{\PP^3_R}$. To obtain a presentation we only need to add the relations coming from the action of $z$. We have that 
$z\cdot 1=\beta u$, and therefore   $z\cdot u=\beta u^2$. We rewrite $\beta u^2$  using the relation $u^2+ug_1-f_1g_2=0$.
Therefore we have the  presentation 
\[
\calO_{\PP^3}(-1) \oplus 3\calO_{\PP^3}(-2) \mapright M
\calO_{\PP^3}\oplus \calO_{\PP^3}(-1)  \longrightarrow 
i_*{\calO}_{X_R}\longrightarrow 0
\]
with $M$ the matrix
\begin{equation}\label{flat presentation}
\begin{pmatrix}
z & -\beta f_1g_2   &g_1x+g_2y & f_1x \\
-\beta & z+\beta g_1 & x & y
\end{pmatrix}.
\end{equation}

A computation shows that the $0^\mathrm{th}$ Fitting ideal  $I=\Fitt^0(i_*{\calO}_{X_R})$ is generated by four elements $Q,\,F_1,\,F_2,$ and $F_3$, where $Q=Q(x,y,w)$ is our recurring cubic, and where 
\begin{align*}
F_1&=z^2+\beta g_1z -\beta^2 f_1g_2\\
F_2&=zx+\beta g_1 x + \beta g_2y\\
F_3&=zy + \beta f_1x.
\end{align*}
As $R[x,y,w]/(Q)$ is a flat family of cubics the $R$-module of degree $n$ forms is locally free of rank $3n$. From the leading terms of $F_1, F_2 $ and $F_3$ it follows that the $R$-module of degree $n$ forms in the graded quotient ring $R[x,y,z,w]/(Q,F_1,F_2,F_3)$ is locally free of rank $3n+1$, adding only the free component with basis $zw^{n-1}$ to the forms determined by the cubic $Q$ alone.
Therefore $Z$ is flat over $R$.
\end{proof}

\subsection{The twisted cubic component} 
The Hilbert scheme $\Hilb_{\PP^3}$ of closed subschemes in $\PP^3$ having Hilbert 
polynomial $3t+1$ consists of two smooth components. This was proven by Piene 
and Schlessinger \cite{PieneSchlessinger} when the base field has 
characteristic different from 2 and 3, but their result is valid in any characteristic, 
see Proposition \ref{PS}. One of these components is 12 dimensional and 
contains an open subset that parametrizes twisted cubics in $\PP^3$. 
We refer to the component $\HC$ as the twisted cubic component.

\begin{thm}\label{main}
Let $\CM^{3t+1}_{\PP^3}$ be the space of CM-curves in $\PP^3$  with Hilbert polynomial $3t+1$, and let $\HC \subset \Hilb_{\PP^3}$ denote the twisted cubic component of the Hilbert scheme. We have an isomorphism 
\[
\phi \colon \CM^{3t+1}_{\PP^3} \ra \HC,
\]
mapping an $S$-valued point $(X,i)$ to the closed subscheme 
in $\PP^3_S$ determined by the $0^\mathrm{th}$ Fitting ideal 
$\Fitt^0(i_*{\calO}_X)$.
\end{thm}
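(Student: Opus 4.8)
The plan is to construct $\phi$ as a morphism, recognize it as a proper birational morphism onto the smooth --- hence normal --- variety $\HC$, and then conclude by Zariski's Main Theorem once its fibers are shown to be finite. The conceptual input making this work is the de Jong--van Straten reconstruction, which pins down the boundary fibers.

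First I would check that $\phi$ is a well-defined morphism landing in $\HC$. Since the zero'th Fitting ideal commutes with base change, the fibers of $Z=V(\Fitt^0(i_*\calO_X))\subseteq \PP^3_S$ over an $S$-point $(X,i)$ are the Fitting schemes of the fibers of $(X,i)$. When the fiber map $i$ is a closed immersion, $i_*\calO_X$ is cyclic and $Z$ is just $X$; when it is not, Lemma~\ref{Fittingflat} shows the Fitting scheme is flat with Hilbert polynomial $3t+1$. Reducing flatness to the local case and combining the two descriptions shows that $Z\to S$ is flat with constant Hilbert polynomial $3t+1$, so $\phi$ defines a morphism to $\Hilb_{\PP^3}$. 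As $\CM^{3t+1}_{\PP^3}$ is irreducible (Proposition~\ref{smooth}) with generic point a smooth twisted cubic, whose Hilbert point lies on $\HC$, the morphism factors through the closed component $\HC$.

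Second I would exhibit $\phi$ as an isomorphism over an open locus. Let $U\subseteq\CM^{3t+1}_{\PP^3}$ be the locus where $i$ is a closed immersion and $V\subseteq\HC$ the locus of subschemes that are Cohen--Macaulay of pure dimension one. On $U$ we have $\phi(X,i)=X$, and the universal subscheme over $V$ with its inclusion defines an inverse $V\to U$, so $\phi|_U\colon U\xrightarrow{\sim}V$. Both $U$ and $V$ are dense (they contain the twisted cubics), so $\phi$ is birational; being a morphism from the proper space $\CM^{3t+1}_{\PP^3}$ to the separated $\HC$, it is proper, hence surjective onto the irreducible $\HC$. The heart of the matter is then to show the fibers are finite by analyzing the boundary $\CM^{3t+1}_{\PP^3}\setminus U$. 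If $i$ is not a closed immersion, then by Proposition~\ref{not immersion} the map factors through the unique plane $\Pi$ spanned by the plane cubic $D=i(X)$, so $(X,i)$ is a CM-curve in $\Pi\cong\PP^2$ followed by a linear embedding. From $Z=\phi(X,i)$ one recovers $\Pi$ as the span of $\mathrm{Supp}(Z)$, the cubic $D$ and its singular section $N$ from the local presentation of $Z$ at the image of the plain double point (as in Lemma~\ref{Fittingflat}), and finally $(X,i)$ itself from $(D,N)$ via the reconstruction $B=\Hom_A(\n,\n)$ of Propositions~\ref{DucoTheo} and~\ref{CM i P2}; thus the fiber over such a $Z$ is a single point. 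Moreover the explicit $Z$ of Lemma~\ref{Fittingflat} carries an embedded point at the double point, so it is not Cohen--Macaulay, whence $\phi(\CM^{3t+1}_{\PP^3}\setminus U)$ is disjoint from $V=\phi(U)$ and $\phi$ is bijective.

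Finally $\phi$ is proper with finite fibers, hence finite, and birational onto the normal variety $\HC$; the inclusion $\calO_{\HC}\hookrightarrow\phi_*\calO_{\CM^{3t+1}_{\PP^3}}$ is then a finite birational extension inside the function field, forced to be an equality by normality, so $\phi$ is an isomorphism (and $\CM^{3t+1}_{\PP^3}$ inherits projectivity from $\HC$). The main obstacle is the boundary analysis: making the reconstruction of a non-immersed CM-curve from the non-reduced, embedded-point-carrying Fitting scheme $Z$ precise enough to conclude that the fibers are singletons, and verifying that these boundary images genuinely avoid the Cohen--Macaulay locus $V$. If one prefers to avoid the reconstruction, an equivalent route is to compute $d\phi$ on the explicit versal family of Lemma~\ref{degenerate}, showing it is an isomorphism of tangent spaces at the most special point (hence everywhere by semicontinuity and Proposition~\ref{smooth}), so that $\phi$ is unramified and therefore finite, again yielding an isomorphism.
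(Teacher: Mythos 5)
Your proposal is correct and follows essentially the same route as the paper: well-definedness via Lemma \ref{Fittingflat} and irreducibility, bijectivity by matching the closed-immersion locus with the Cohen--Macaulay locus of $\HC$ and identifying the boundary fibers as singletons through the classification of planar cubics with an embedded singular point together with the uniqueness statement of Proposition \ref{CM i P2} (i.e.\ the de Jong--van Straten reconstruction), and finally Zariski's Main Theorem using that both spaces are smooth of dimension $12$. Your closing step phrased as ``finite, birational onto a normal variety, hence an isomorphism'' is exactly the form of ZMT the paper invokes, so no genuinely different argument is involved.
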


\begin{proof} To show that the  morphism $\phi$ is well-defined   it suffices to 
check  it over an affine base scheme $S=\Spec(R)$.
 If the map $i_R\colon X_R\ra \PP^3_R$ is a closed immersion there is nothing to prove as the $0^\mathrm{th}$ Fitting ideal is the the defining ideal of the image of the closed immersion. If the map is not a closed immersion it follows from Lemma \ref{Fittingflat} that the $0^\mathrm{th}$ Fitting ideal defines a flat subscheme with appropriate Hilbert polynomial. We have therefore a morphism $\phi$ from $\CM^{3t+1}_{\PP^3}$ to the Hilbert scheme $\Hilb_{\PP^3}$. As $\CM^{3t+1}_{\PP^3}$ is irreducible it follows that the morphism factorizes through the twisted cubic component $\HC$ and gives the sought morphism.

Let $Z\subseteq \PP^3$ be a closed subscheme corresponding to a point on the 
twisted cubic component $\HC$. There is a divisor $\delta$ on $\HC$ corresponding to singular planar cubics together with a spatial  embedded point. If $Z$ corresponds to a point on $\HC$ but not on the divisor $\delta$ then the closed 
immersion $i\colon Z\ra \PP^3$ is a CM-curve. 
If $Z$ is a point on the divisor, then it is a plane cubic, lying in a plane $\Pi\subset \PP^3$, with an embedded point at a singular
point $P$ of the cubic, see (\cite[Lemma 2]{PieneSchlessinger}). 
By Proposition \ref{CM i P2} there is a (unique) CM-curve $i_\Pi\colon X\ra \Pi $
such that the image of $X$ is the singular cubic and the map $i_\Pi$ is not
an isomorphism onto its image at $P$. Let $i$ be the composed map
$i\colon X\ra \Pi \subset \PP^3$. Then the image by $\phi$ of the 
curve $(X,i)$ is $Z\subset \PP^3$. More explicitly, the scheme $Z$
is projectively equivalent to the scheme given by the ideal 
$I=(xz, yz, z^2, Q)$ where $Q=Q(x,y,w) $ is a cubic form singular at $(0:0:1)$ (see \cite[Lemma 2]{PieneSchlessinger}).  As in the proof of 
Lemma \ref{Fittingflat} the curve $X$ is given by the maximal minors
of the matrix  (\ref{matrix minors}) and we have a presentation 
(\ref{flat presentation}) with $\beta=0$. The $0^\mathrm{th}$ Fitting ideal is the 
ideal $(Q,z^2,zx,zy)$.
It follows that the morphism $\phi \colon \CM \ra \HC$ is
bijective.
Because the spaces are isomorphic on an open dense subset, the isomorphism follows from Zariski's Main Theorem
\cite[Chapter~III, \S9]{Redbook}
as both  spaces are smooth of dimension 12.
\end{proof}

\begin{rem} Freiermuth and Trautmann studied the moduli scheme of stable 
sheaves supported on cubic space curves \cite{FreiermuthTrautmann}.
In characteristic zero there exists a projective coarse moduli space $M^p_X$
for semi-stable sheaves on a smooth projective variety $X$ with a fixed
Hilbert polynomial $p$. For $p(t)=3t+1$ and $X=\PP^3$ all sheaves in
$M=M^{3t+1}_{\PP^3}$ are stable. The result of \cite{FreiermuthTrautmann}
is that the projective variety $M$ consists of two nonsingular, irreducible,
rational components $M_0$ and $M_1$ of dimension 12 and 13, intersecting
transversally in a smooth variety of dimension 11, see also Appendix \ref{app:FT}.

The component $M_0$ is isomorphic to the twisted cubic component $\HC$
of the Hilbert scheme.  The identification  
also uses Fitting ideals. If $i\colon X\ra \PP^3$ is a CM-curve, with Hilbert polynomial 
$3t+1$, then the module $i_*{\calO}_X$ is a stable sheaf supported on a cubic.
Thus, by forgetting the algebra structure of $i_*{\calO}_X$ we get that our 
morphism $\phi \colon \CM^{3t+1}_{\PP^3} \ra \HC$ factorizes through the 
moduli scheme of stable sheaves.
\end{rem}

\begin{ex} The following example shows that the $0^\mathrm{th}$ Fitting ideal does not always give a morphism from $\CM$ to the Hilbert scheme. Indeed, we show that here
the family of closed subschemes determined by the $0^\mathrm{th}$ Fitting ideal is not flat.

We start with a genus 2 curve embedded with the linear system $|5P|$, where $P$ is a 
Weierstrass point. More precisely, we look at the curve $X\subset \PP^3$ 
given by the homogeneous ideal generated by the maximal minors of
\[
\begin{pmatrix} 
x & u & y^2+w^2 
\\ y & x & u^2
\end{pmatrix}.
\]
The projection from $X$ to the plane $\PP^2$ in the coordinates $x,y, w$ is 
finite. Let another $\PP^3$ be given by the homogenous coordinate ring 
$k[x,y,z,w]$, and consider the family of maps $i_t\colon X_t\cong X \ra \PP^3$, $t\in T= \AA^1$,
determined by the linear map
sending $(x,y,z,w)$ to $(x,y, tu, w)$.

In the affine chart $\{w=1\}$ we get that $i_{t*}\calO_X$ has presentation
\[
\begin{pmatrix} 
z & 0 & -tx(y^2+1) & 0 & x^2 & -y-y^3 \\
-t & z & 0 & x^2 & -y & 0\\
0 & -t & z & -y & 0 & x
\end{pmatrix}.
\]
The $0^\mathrm{th}$ Fitting ideal is then the ideal generated by
\begin{gather*}
z^3-t^3x(y^2+1), \quad z^2x-t^2y(y^2+1),\\ zx^3-ty^2(y^2+1), 
\qquad x^5-y^3(y^2+1),\\
(yz-tx^2)x, \quad (yz-tx^2)y, \quad (yz-tx^2)z, \quad (yz-tx^2)t.
\end{gather*}
The family determined by this ideal is not flat, as $yz-tx^2$ is a $t$-torsion element. For $t\neq 0$ we have the 
generator $yz-tx^2$ in the ideal, making the three first generators on the 
last row above superfluous.

The family above gives an $T=\AA^1$-valued point $(X_T=X\times \AA^1, i_T)$ 
of $\CM^{5t-1}_{\PP^3}$. 
Then by taking the $0^\mathrm{th}$ Fitting ideal of $i_{T*}{\calO}_{X_{T}}$ we get a closed 
subscheme $Z \subseteq \PP^3_{\AA^1}$ which is not a flat family, and in particular the Hilbert polynomial of a fiber is not constant. 
\end{ex}

\subsection{Higher codimension}
The twisted cubic component $\TC_n$ of the Hilbert scheme $\Hilb_{\PP^n}$ for
$n>3$ has been described by Chung and Kiem 
\cite{ChungKiem};
their proof works in any characteristic.
The component $\TC_n$
is isomorphic to a component of the relative
Hilbert scheme of the  $\PP^3$-bundle $\PP\calU \ra 
G_4^{n+1}$, where $\calU$ is the universal rank 4 vector bundle on the 
Grassmannian $G_4^{n+1}$ \cite[Proposition 3.3]{ChungKiem}.
Chung and Kiem also describe a  morphism $\TC_n\ra M_0$ to a component of the moduli
scheme $M^{3t+1}_{\PP^n}$ of stable sheaves and show that this morphism realises 
$\TC_n$ as the blow-up of $M_0$ along the smooth locus of 
stable sheaves with planar support \cite[Proposition 1.3]{ChungKiem}.
We have a similar result for the space of CM-curves.

We remark that the construction with the Fitting ideal does not
give a morphism from $\CM^{3t+1}_{\PP^n}$ to $\Hilb_{\PP^n}$ if $n> 3$.
Indeed, if
the image of a CM-curve $X\ra \PP^n$ is a planar curve then the $0^\mathrm{th}$
Fitting ideal gives a scheme with Hilbert polynomial $3t+n-2$.
By a coordinate 
transformation we may assume that the plane containing the image $i(X)$ is given by $z_1=\dots=z_{n-2}=0$ and that the non-isomorphism locus on the image $i(X)$ is the point $(0:\dots:0:1)$. A presentation of $i_*\calO_X$ as $\calO_{\PP^n}$-module is then given by
the matrix
\[
\begin{pmatrix} 
z_1 & 0 & \dots &z_{n-2} & 0 &g & f \\ 
0 & z_1 & \dots &0&z_{n-2}& x & y
\end{pmatrix}.
\]
The $0^\mathrm{th}$ Fitting ideal is the ideal $I=(z_i^2,z_iz_j,z_ix,z_iy,yg-xf)$ for $ 1 \leq i \leq n-2$ and 
$i<j \leq n-2$. 
There is a $\PP^{n-3}$ of $\PP^3$'s containing the plane 
$z_1=\dots=z_{n-2}=0$ and each of these $\PP^3$'s contains a subscheme
$Z$  with Hilbert polynomial $3t+1$ with the planar singular cubic as 
1-dimensional subscheme. 

\begin{prop}The twisted cubic component $\TC_n$
of the Hilbert scheme $\Hilb_{\PP^n}$, $n> 3$, is isomorphic to the blow-up of $\CM^{3t+1}_{\PP^n}$
in the locus of CM-curves with planar scheme-theoretic image.
\end{prop}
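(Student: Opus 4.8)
The plan is to build the blow-down morphism $\pi\colon \TC_n \to \CM^{3t+1}_{\PP^n}$ directly, identify the locus over which it is not an isomorphism, and then recognize $\pi$ as the blow-up of $\CM^{3t+1}_{\PP^n}$ along the planar locus $W$ by matching its exceptional fibres with the projectivized normal bundle of $W$. To construct $\pi$ I would use the Chung--Kiem description \cite[Proposition 3.3]{ChungKiem}: a point of $\TC_n$ comes with a distinguished $\PP^3=\PP\calU_g$ containing the subscheme $Z$, so there is a structure morphism $\TC_n\to G_4^{n+1}$. Applying the $\PP^3$-isomorphism of Theorem \ref{main} relatively over $G_4^{n+1}$ turns the universal twisted-cubic subscheme in $\PP\calU$ into a universal CM-curve mapping to $\PP\calU\hookrightarrow\PP^n$, hence into a $\TC_n$-point of $\CM^{3t+1}_{\PP^n}$; this defines $\pi$, sending $Z\subset\PP^3\subset\PP^n$ to the CM-curve $(X,i)$ reconstructed from $Z$ inside its spanned $\PP^3$.

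Next I would analyse the fibres of $\pi$. Over the open locus $\CM^{3t+1}_{\PP^n}\setminus W$ of CM-curves whose image spans a $\PP^3$, that $\PP^3$ is unique, so $\pi$ is bijective there and, by Theorem \ref{main} applied relatively, an isomorphism onto the corresponding open subset of $\TC_n$. Over a planar CM-curve $(X,i)\in W$ with image in a plane $\Pi\cong\PP^2$, the discussion preceding the Proposition shows that every $\PP^3$ with $\Pi\subset\PP^3\subset\PP^n$ carries a subscheme $Z$ of Hilbert polynomial $3t+1$ lying over $(X,i)$, and these constitute the fibre of $\pi$. Since the $\PP^3$'s containing a fixed plane $\Pi=\PP(U)$ in $\PP^n=\PP(V)$ are parametrized by $\PP(V/U)\cong\PP^{n-3}$, each fibre of $\pi$ over $W$ is a $\PP^{n-3}$.

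To recognize $\pi$ as a blow-up I would compute the normal data of $W$. The locus $W$ fibres over the Grassmannian $G(3,n+1)$ of planes $\Pi\subset\PP^n$ with fibre $\CM^{3t+1}_{\PP^2}$, which is smooth of dimension $8$ by Proposition \ref{CM i P2}; hence $W$ is smooth of dimension $3(n-2)+8=3n+2$, of codimension $n-2$. Using the versal family of Lemma \ref{Pn degenerate}, the normal directions to $W$ at $(X,i)$ are exactly the deformations of $i$ moving the image out of $\Pi$, namely the $u$-components $z_i\mapsto b_{i,4}u$ ($i=1,\dots,n-2$) of the linear map defining $i$; these span an $(n-2)$-dimensional space canonically identified with $V/U$, and the direction $[b_{1,4}:\cdots:b_{n-2,4}]$ records precisely the $\PP^3\supset\Pi$ into which the image is pushed. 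Thus $\pi^{-1}(W)\to W$ is the projectivized normal bundle $\PP(N_{W/\CM})$, matching the exceptional divisor of $\mathrm{Bl}_W\CM^{3t+1}_{\PP^n}$. Since the ideal of $W$ pulls back to the ideal of the Cartier divisor $\pi^{-1}(W)$, the universal property of the blow-up factors $\pi$ through $\beta\colon\mathrm{Bl}_W\CM^{3t+1}_{\PP^n}\to\CM^{3t+1}_{\PP^n}$, giving $\tilde\pi\colon\TC_n\to\mathrm{Bl}_W\CM^{3t+1}_{\PP^n}$ over $\CM^{3t+1}_{\PP^n}$; as both targets are smooth (blow-up of a smooth variety along the smooth centre $W$) and $\tilde\pi$ is bijective, Zariski's Main Theorem \cite[III \S 9]{Redbook} yields that $\tilde\pi$ is an isomorphism.

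I expect the main obstacle to be the normal-bundle identification, namely making the matching $\PP^{n-3}=\PP(N_{W/\CM})$ canonical in families so that $\pi^{-1}(W)$ is identified with the exceptional divisor as $W$-schemes, not merely fibrewise. A clean alternative that mirrors Chung--Kiem more directly is to show that the forgetful morphism $(X,i)\mapsto i_*\calO_X$ gives an isomorphism of $\CM^{3t+1}_{\PP^n}$ onto the stable-sheaf component $M_0$, reconstructing the CM-curve from the sheaf by the duality of de Jong and van Straten (Proposition \ref{DucoTheo}) carried out in families; under such an isomorphism $W$ corresponds to the smooth locus of sheaves with planar support, and the Proposition is immediate from \cite[Proposition 1.3]{ChungKiem}. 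The difficulty is then transferred to proving that this reconstruction is a scheme morphism globally, which is again the delicate point.
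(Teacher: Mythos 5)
Your overall picture is right---the centre $W$, its smoothness and codimension $n-2$, and the $\PP^{n-3}$ fibres of $\pi$ over $W$ all match the paper---but your route runs in the opposite direction from the paper's, and the step you lean on hardest is exactly the one you do not prove. The paper does not construct a morphism $\TC_n\to\CM^{3t+1}_{\PP^n}$ and then invoke the universal property of the blow-up; instead it forms the incidence space $\widetilde{\CM}\subset\CM^{3t+1}_{\PP^n}\times G_4^{n+1}$ of pairs $((X,i),Q)$ with $i^*$ factoring through the rank-4 quotient $Q$, writes the equations cutting $\widetilde{\CM}$ out of the product in the local coordinates of Lemma \ref{Pn degenerate} and an affine chart of the Grassmannian, and reads off directly that they are the blow-up equations $b_{k,4}=\lambda_{k,4}b_{1,4}$ ($2\le k\le n-2$) for the centre $b_{1,4}=\dots=b_{n-2,4}=0$. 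Only afterwards is $\widetilde{\CM}$ identified with $\TC_n$, by applying Theorem \ref{main} fibrewise over the Grassmannian and citing the Chung--Kiem description of $\TC_n$ inside the relative Hilbert scheme of $\PP\calU\to G_4^{n+1}$.

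The gap in your argument is the sentence ``Since the ideal of $W$ pulls back to the ideal of the Cartier divisor $\pi^{-1}(W)$\dots''. This is precisely what has to be proved, and it does not follow from smoothness of $\TC_n$ together with the fact that $\pi^{-1}(W)$ has codimension one: the ideal $I_W\cdot\calO_{\TC_n}$ is locally generated by the $n-2$ pullbacks of $b_{1,4},\dots,b_{n-2,4}$, and an ideal in a regular local ring whose zero locus is a divisor need not be invertible (compare $(x^2,xy)\subset k[x,y]$). To see that these $n-2$ functions acquire a common factor cutting out the exceptional divisor, with residual ratios giving exactly the Grassmannian direction, you need local coordinates on $\TC_n$ near a planar point and an explicit expression for $\pi$ in them---which is the computation the paper performs. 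Your fibrewise identification of $\pi^{-1}(W)$ with $\PP(N_{W/\CM})$, which you yourself flag as the delicate point, suffers from the same problem: bijectivity plus Zariski's Main Theorem only applies once a morphism $\tilde\pi$ to the blow-up exists, and that morphism only exists after the invertibility is established. Two secondary points: Lemma \ref{Pn degenerate} gives the versal family only at the single most degenerate curve, so using those coordinates at an arbitrary point of $W$ requires the extension to a neighbourhood of any planar CM-curve that the paper obtains by arguing as in Lemma \ref{Fittingflat}; and your suggested shortcut through the stable-sheaf component $M_0$ and \cite[Proposition 1.3]{ChungKiem} would restrict the statement to characteristic zero, which the paper deliberately avoids.
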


\begin{proof} 
 Let $(X_S,i_S)$ be an $S$-valued point of $\CM=\CM^{3t+1}_{\PP^n}$. By Proposition \ref{regular} we have that
$H^0(X_S, i_S^*\calO_{\PP^n_S}(1))$ is a locally free $\calO_S$-module of rank 4. Let $G=G_4^{n+1}$ be the Grassmannian of rank 4 quotients of the free module $H^0(\PP^n, \calO(1)) $ of rank $n+1$. We define
the subfunctor $\widetilde{\CM}$ of $\CM\times G$ 
by  setting, for any scheme $S$
\[ \widetilde{\CM}(S) =\{((X_S,i_S),Q_S) \mid  i^*_S \text{ factors through } Q_S \}\;.
\]
The condition that $i_S^*\colon H^0(\PP^n_S, \calO_{\PP^n_S}(1))\ra H^0(X_S, i_S^*\calO_{\PP^n_S}(1))$ factors through $H^0(\PP^n_S, \calO_{\PP^n_S}(1)) \ra Q_S$ is a closed condition on $S$. And consequently $\widetilde{\CM}$ is represented by a closed
subspace
of the product. We have that the projection $\sigma \colon \widetilde{\CM}\ra \CM$ is an isomorphism over the open set where the corresponding curves $i_S\colon X_S \ra \PP^n_S$ are closed immersions. 

To study this morphism $\sigma$ we first describe $\CM$ in a neighbourhood of a curve
$i\colon X\ra \PP^n$ whose image is a  planar curve.
We may assume that the plane is given as ${z_1=\cdots =z_{n-2}=0}$,  that $X$ is given by an ideal $I$ in $k[x,y,u,w]$, and 
 that the map $i$ is given by $i^*(x,y,z_1,\dots,z_{n-2},w)=(x,y,0,\dots ,0,w)$. 
It follows from Lemma~\ref{Pn degenerate} and by arguing as in the 
proof of Lemma \ref{Fittingflat} that we may assume
that a neighborhood of the curve $i\colon X\ra \PP^n$
is given by perturbing the ideal $I\subset k[x,y,u,w]$ and perturbing the map $i^*$  to the map $j^*$  that sends
$(x,y,z_1,\dots,z_{n-2},w)$ to $(x,y, 
b_{1,1}x+b_{1,2}y+b_{1,3}w+b_{1,4}u,\dots,
b_{n-2,1}x+b_{n-2,2}y+b_{n-2,3}w+b_{n-2,4}u,w)$.
The locus of 
CM-curves with planar scheme-theoretic image is  locally given by
the equations $$b_{1,4}=b_{2,4}= \cdots =b_{n-2,4}=0.$$

An affine chart of the Grassmannian is obtained
by choosing four global sections of $H^0(\PP^n, \calO_{\PP^n}(1))$ that form a basis of $H^0(X,i^*{\calO}_{\PP^n}(1))$. The affine chart is then the affine space representing linear maps from the remaining $n-3$ global sections to $H^0(X,i^*{\calO}_{\PP^n}(1))$. By choosing the global sections $x,y, w, z_1$ the universal family over the affine chart is then the map that sends $z_i\mapsto \lambda_{i,1}x+\lambda_{i,2}y+\lambda_{i,3}w+\lambda_{i,4}z_1$, for every $i=2, \ldots, n-2.$
The condition that the map $j^*$ factors through the quotient map  on the affine chart of the Grassmannian is  that $b_{k,m}=\lambda_{k,m}+\lambda_{k,4}b_{1,m}$ for $2\leq k\leq n-2$, $m=1,2,3$ and $b_{k,4}=\lambda_{k,4}b_{1,4}$
for $2\leq k\leq n-2$. The last $n-3$ equations show that the morphism $\sigma$ is the blow up of  the locus of 
CM-curves with planar scheme-theoretic image.

Let $\calU$ be  the universal quotient bundle over $G$. 
By \cite{ChungKiem} the Hilbert scheme $\Hilb_{\PP^n}$
is isomorphic to the relative Hilbert scheme
$\Hilb_{\PP\calU}\ra G$, with fibres isomorphic to $\Hilb_{\PP^3}$. We define a morphism from $\widetilde{\CM}$ to the Hilbert scheme $\Hilb_{\PP\calU/ G}\ra G$ by sending a pair $((X_S,i_S), Q_S)$ to the closed subscheme in $\PP\calU$ determined by the $0^\mathrm{th}$ Fitting ideal of $i_{S*}\calO_{X_S}$ over $Q_S$.

By Theorem \ref{main} the space of $\CM$ curves in each fibre is isomorphic to the twisted cubic component of the fibre 
of the Hilbert scheme $\Hilb_{\PP\calU/ G}\ra G$.
By \cite{ChungKiem} the twisted cubic component $\TC_n$
is the irreducible smooth scheme in $\Hilb_{\PP\calU}\ra G$ where the fibers are  twisted cubic components $\HC \subset \Hilb_{\PP^3}$.
\end{proof}

\subsection{The Hilbert scheme component with two skew lines} 
The Hilbert scheme $\operatorname{Hilb}^{2t+2}_{\PP^3}$ of closed subschemes in 
$\PP^3$ over a field of characteristic zero,  having Hilbert polynomial $2t+2$ 
consists of two smooth components. The smoothness of these components was 
observed in \cite{Harris}, and a proof was given by Chen, Coskun, and 
Nollet in \cite{CCN}. One of these components $H_3$ is smooth of dimension 8 
and a general point correspond to a pair of skew lines. 
We claim that with the arguments presented in the present 
article, one can prove that we have a morphism 
$\phi \colon \CM^{2t+2}_{\PP^3}\ra H_3$ sending a $S$-valued point $(X,i)$ 
to the closed subscheme in $\PP^3\times S$ defined by the $0^\mathrm{th}$ Fitting ideal 
sheaf $\Fitt^0(i_*{\calO}_X)$. The morphism is an isomorphism.

\appendix

\section{The Hilbert scheme of twisted cubics}
\label{app:PS}

The main purpose of this section is to prove the following result.

\begin{prop}[Piene-Schlessinger]\label{PS} The Hilbert scheme $\Hilb_{\PP^3}$ is the 
union of two nonsingular rational varieties $H$ and $H'$, of dimension 12 and 
15; their intersection is non-singular, transversal, and rational of dimension 11.
\end{prop}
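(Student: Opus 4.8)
The plan is to prove the statement in three stages: a characteristic‑free structure theorem for the ideals, an explicit parametrisation of the two loci, and a single tangent‑space computation that yields smoothness and transversality at once. First I would establish that every closed subscheme $C\subseteq\PP^3$ with Hilbert polynomial $3t+1$ is of exactly one of two types, by analysing the Hartshorne--Rao module $M(C)=\bigoplus_{t\in\ZZ}H^1(\PP^3,\mathcal{I}_C(t))$. Since $\deg C=3$ and $p_a(C)=0$, a Hilbert‑function computation, which is insensitive to the characteristic, shows that either $M(C)=0$, in which case $C$ is arithmetically Cohen--Macaulay and the Hilbert--Burch theorem forces a resolution
\[
0\to \calO_{\PP^3}(-3)^2 \xrightarrow{\;M\;} \calO_{\PP^3}(-2)^3 \to \mathcal{I}_C\to 0,
\]
so that $I_C$ is generated by the $2\times 2$ minors of a $3\times 2$ matrix $M$ of linear forms; or $\dim_k M(C)=1$, in which case $C$ is a plane cubic carrying an embedded or isolated point, projectively equivalent to the scheme $(xz,\,yz,\,z^2,\,Q(x,y,w))$ with $Q$ a cubic form, as in \cite[Lemma 2]{PieneSchlessinger}. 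This dichotomy is the backbone of the argument and holds in every characteristic.

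Next I would parametrise the two loci. The arithmetically Cohen--Macaulay curves form $H$: sending a $3\times 2$ matrix of linear forms to the scheme cut out by its minors defines a morphism from an open subscheme $U$ of the affine space $\AA^{24}$ of such matrices, and this morphism exhibits $H$ as the quotient of $U$ by $\mathrm{GL}_3\times\mathrm{GL}_2$ acting with one‑dimensional (scalar) generic stabiliser, so that $H$ is smooth, irreducible and rational of dimension $24-13+1=12$. The non‑arithmetically Cohen--Macaulay curves form $H'$: the data of a plane $\Pi\in(\PP^3)^\vee$, a cubic $D\subset\Pi$, and a point of $\Pi$ are parametrised by a projective bundle, realising $H'$ as a smooth rational variety of dimension $3+9+3=15$. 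The intersection $H\cap H'$ consists exactly of the plane cubics with an embedded point located at a singular point of the cubic; parametrising the plane, the singular point in it, and the cubics singular there gives a smooth rational variety of dimension $3+2+6=11$.

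Finally I would compute the Zariski tangent space $T_C\Hilb_{\PP^3}=\Hom_{\calO_{\PP^3}}(\mathcal{I}_C,\calO_C)$ at the relevant points. At a general twisted cubic, respectively a general plane cubic with an isolated point, one obtains dimension $12$, respectively $15$, confirming smoothness of $H$ and $H'$ along their generic loci. The decisive computation is at a point of the intersection, where, using the normal form $(xz,yz,z^2,Q)$, I would show $\dim_k T_C\Hilb_{\PP^3}=16$. Since $12+15-11=16$, and the subspaces $T_CH$ and $T_CH'$ meet in the $11$‑dimensional $T_C(H\cap H')$, this forces $\Hilb_{\PP^3}$ to be smooth of dimension $16$ at $C$ with $H$ and $H'$ crossing transversally there; as these are the only curves lying on both components, the proposition follows.

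The main obstacle is precisely this last computation in characteristics $2$ and $3$. In those characteristics the partial derivatives of $Q$ and the structure of the normal module degenerate, so the tangent‑space and obstruction analysis of \cite{PieneSchlessinger} no longer applies verbatim; I would therefore carry out the calculation directly from the explicit generators, checking term by term that the count $\dim_k T_C\Hilb_{\PP^3}=16$ persists and that the transversality conclusion survives. A secondary point requiring care in small characteristic is the classification of the singular cubics $Q$ appearing in $H\cap H'$, which I would handle by working with the normal form rather than with a list of singularity types.
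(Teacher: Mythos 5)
Your setup — the dichotomy between arithmetically Cohen--Macaulay (determinantal) curves and plane cubics with an embedded or isolated point, the parametrisation of the two loci, and the tangent space dimensions $12$, $15$, $16$ — matches the Piene--Schlessinger framework that the paper relies on (and the $16$ is exactly Proposition \ref{tangent of Hilb}). But your decisive step is not valid: from $\dim_k T_C\Hilb=16$ and the count $12+15-11=16$ you cannot conclude that $H$ and $H'$ are smooth at $C$ and cross transversally. The tangent space of a reducible scheme is not determined by, nor does it determine, the configuration of the components: for two smooth plane curves meeting \emph{tangentially} at the origin, the union $V(fg)$ still has $\dim T_0=2=1+1-0$, so the same numerology holds without transversality. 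Moreover you have no a priori control of $\dim T_CH$ and $\dim T_CH'$ at an intersection point: such a point is a plane cubic with an embedded point at a singular point of the cubic, which is \emph{not} arithmetically Cohen--Macaulay (its ideal needs a cubic generator), so it lies outside the image of your $\mathrm{GL}_3\times\mathrm{GL}_2$-quotient description of $H$, and smoothness of $H$ there is precisely one of the things to be proved, not an input.

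What is actually needed, and what the paper supplies in Lemma \ref{PS section 5} (replacing Lemma 6 of \cite{PieneSchlessinger}), is the full versal deformation at the most degenerate point $I=(z^2,zx,zy,x^3)$: one lifts the $16$ first-order deformations to second order, computes the obstruction to lifting the relations $\varphi R=0$, and finds that the base space is cut out by $J=(b_{12}c_{13},b_{12}c_{14},b_{12}c_{15},b_{12}c_{16})$. These explicit quadratic equations — not the tangent-space count — exhibit the germ as a transversal union of smooth pieces of dimensions $12$ and $15$ meeting in dimension $11$; combined with the $\operatorname{PGL}_4$-action and the fact that every subscheme with Hilbert polynomial $3t+1$ specialises to this one, this gives the global statement. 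This obstruction calculus is also exactly where characteristics $2$ and $3$ enter: the paper's computation avoids the completion of a square (impossible in characteristic $2$) and a coordinate change (impossible in characteristic $3$) used in the original argument. Your closing paragraph gestures at ``checking that the count $\dim_k T_C\Hilb=16$ persists,'' but the characteristic-sensitive content lies in the second-order lifting, which your proof never performs.
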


\begin{rem} The statement about the structure of the Hilbert scheme 
for algebraically closed fields of
characteristic different from 2 and 3 is found \cite{PieneSchlessinger}. The
reason for avoiding these characteristics lies in the deformation computation 
in Section 5 of their paper; this is not needed for the results in the 
rest of the paper. Our characteristic free Lemma \ref{PS section 5} given 
below,  replaces Lemma 6 in \emph{loc. cit.}, from where the proposition 
then follows.
\end{rem}

\begin{rem} Piene and Schlessinger reduce the computation of the
local structure of the Hilbert scheme to a deformation
computation for a saturated homogeneous ideal in $k[x,y,z,w]$
by their Comparison Theorem in \cite[Section~3]{PieneSchlessinger}.

A detailed explanation of this type of deformation
computation is given in \cite{Stevens}, which furthermore contains an easier 
example of two intersecting
lines with an embedded point at the origin, relevant for the $(2t+2)$-case. 
\end{rem}

\subsection{Tangent space calculations} We start by reproving the following, 
known, explicit description of the following open affine chart of the Hilbert 
scheme $\Hilb$ of closed subschemes in $\PP^3$ with Hilbert polynomial $3t+1$, 
a result we relied on in Section \ref{section_CM}.

\begin{lemma}\label{defo_curve} Let $Z\subseteq \PP^3$ be the closed subscheme 
given by the graded ideal $I=(u^2,uy-x^2, xu)\subseteq k[x,y,u,w]$. An affine 
open chart around the corresponding point in the Hilbert scheme $\Hilb $ is 
given by the polynomial ring $A=k[a_1, \ldots, a_{12}]$. The maximal minors of
\[
\begin{pmatrix}
 x + a_2 w& a_7 y + a_6 w& u+a_{12}x+a_{11}y+a_{10}u+a_9 w\\
 y +a_1w& u + a_5 x+ a_4 y + a_3 w & x +a_8 w
\end{pmatrix}
\]
generate an ideal $I(a)\subset A[x,y,u,w]$ that determines the restriction of 
the universal family to $\Spec(A)$.
\end{lemma}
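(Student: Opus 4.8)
The plan is to recognise $Z$ as an arithmetically Cohen-Macaulay curve of codimension two and to exploit its Hilbert-Burch structure. The ideal $I=(u^2,uy-x^2,xu)$ is exactly the ideal of maximal minors of the displayed matrix at $a=0$, namely $M_0=\left(\begin{smallmatrix} x & 0 & u \\ y & u & x\end{smallmatrix}\right)$, and $Z$ has the graded resolution
\[
0 \ra S(-3)^{2} \mapright{M_0^{t}} S(-2)^{3} \ra I \ra 0,\qquad S=k[x,y,u,w].
\]
By the Comparison Theorem of Piene and Schlessinger \cite{PieneSchlessinger} the local structure of $\Hilb$ at $[Z]$ is computed from deformations of this saturated graded ideal. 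Since being a codimension-two ACM (hence determinantal) scheme is an open condition preserved under flat deformation with constant Hilbert polynomial, every nearby curve is again cut out by the maximal minors of a $2\times 3$ matrix of linear forms. I would therefore parametrise deformations by perturbing the six linear entries of $M_0$, obtaining a $24$-dimensional space of matrices.

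Next I would eliminate the redundant directions using the action $M\mapsto PMQ$ of $GL_2\times GL_3$, which leaves the minor ideal unchanged. A direct computation of the infinitesimal stabiliser of $M_0$ — solving $pM_0=M_0q$ for constant matrices $p\in\mathfrak{gl}_2$, $q\in\mathfrak{gl}_3$ — shows it consists only of the scalars $(\lambda I_2,\lambda I_3)$, so the orbit through $M_0$ has dimension $(4+9)-1=12$. The displayed matrix is precisely a $12$-parameter slice transverse to this orbit, retaining $1+2+4+1+3+1=12$ of the $24$ linear coefficients. The persistence of the Hilbert-Burch complex guarantees that the minors of the generic matrix of this shape cut out a flat family over $A=k[a_1,\dots,a_{12}]$, whose resolution keeps the same graded shape, so the Hilbert polynomial is constantly $3t+1$; this yields a morphism $\Spec A\ra\Hilb$.

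Finally I would identify the tangent space of $\Hilb$ at $[Z]$ with $\Hom_S(I,S/I)_0$ and show it is $12$-dimensional, matching the orbit-complement count above. Since the twelve parameters $a_i$ give independent first-order deformations, the Kodaira-Spencer map of the slice is an isomorphism onto this tangent space; as $\Spec A$ is smooth of dimension $12$, the morphism $\Spec A\ra\Hilb$ is then an open immersion onto a neighbourhood of $[Z]$, which is the assertion. The hard part will be verifying, independently of the characteristic, that there are no further obstructions and that the determinantal perturbations exhaust all deformations. In characteristics $2$ and $3$ the deformation argument of Section 5 of \cite{PieneSchlessinger} fails, and this is exactly where the characteristic-free Lemma \ref{PS section 5} is needed to replace their obstruction computation.
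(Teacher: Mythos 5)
Your strategy is sound and in substance close to the paper's: both arguments rest on the fact (Schaps, cited as \cite{Sch} in the paper) that a codimension-two ACM, hence determinantal, subscheme has unobstructed deformations, all of which are obtained by perturbing the entries of the Hilbert--Burch matrix. The paper then computes $H^0(Z,N_{Z/\PP^3})$ directly, by listing the perturbations of $M_0=\left(\begin{smallmatrix} x & 0 & u \\ y & u & x\end{smallmatrix}\right)$ and the relations among their images in $\Hom_S(I,S/I)$. Your orbit--slice bookkeeping under $GL_2\times GL_3$ is a reasonable organizing device, and your stabiliser computation is right up to a sign (the infinitesimal stabiliser is $\{(\lambda I_2,-\lambda I_3)\}$, i.e.\ $(\lambda I_2,\lambda^{-1}I_3)$ at the group level), giving a $12$-dimensional orbit inside the $24$-dimensional space of matrices of linear forms and hence the upper bound $\dim T_{[Z]}\Hilb\le 12$. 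But the step you defer --- that the twelve slice directions have linearly independent, nonzero images in $\Hom_S(I,S/I)_0$, equivalently that the kernel of the map from matrix perturbations to $\Hom_S(I,S/I)_0$ is \emph{exactly} the orbit tangent space and not larger --- is precisely the computation that constitutes the paper's proof (their tables of products $x\,\partial/\partial\eps_{ij}$, $y\,\partial/\partial\eps_{ij}$, $u\,\partial/\partial\eps_{ij}$ modulo $I$). Without it the Kodaira--Spencer map of your slice could a priori fail to be injective and the tangent space could be smaller than $12$; the group-theoretic count only bounds it from above. So the proposal is a correct outline of essentially the same argument, with the decisive verification left as a promissory note.

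One genuine error: your closing appeal to Lemma \ref{PS section 5} is misplaced. That lemma concerns the ideal $(z^2,zx,zy,x^3)$, a point of the \emph{intersection} of the two components of $\Hilb$, where obstructions genuinely occur and where the characteristic $2$, $3$ issues of \cite{PieneSchlessinger} arise. The curve $Z$ of the present lemma is determinantal of codimension two and lies only on the twisted cubic component; unobstructedness and the exhaustion of first-order deformations by matrix perturbations hold in every characteristic, so no replacement of the Piene--Schlessinger Section 5 computation is needed here and there is no characteristic issue in this lemma at all.
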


\begin{proof} The first order deformations are determined by the global 
sections of the normal sheaf $H^0(Z,N_{Z/\PP^3})$.  It is convenient
to work in the affine chart $\{w=1\}$.  
As the ideal is determinantal, we can compute as described
in \cite{Sch}.
The sections of the normal
sheaf are then generated by the following six deformations induced
by perturbing the matrix
\[
\begin{pmatrix} 
 x + \eps_{11} & \eps_{12} & u+ \eps_{13} \\ 
 y+ \eps_{21} & u + \eps_{22} & x+ \eps_{23}
 \end{pmatrix}\;.
\] 
We describe an infinitesimal deformation by its action on
the vector $(u^2,yu-x^2,xu)$ of generators of the ideal.
Written out this action reads
\begin{align*}
\textstyle \dd{ \eps_{11} } &=(0,-x,u) & \textstyle \dd{ \eps_{12} } &=(-x,0,-y) & \textstyle \dd{ \eps_{13} } &=(u,y,0) \\
\textstyle\dd{ \eps_{21} } &=(0,u,0) & \textstyle\dd{ \eps_{22} } &=(u,0,x) & \textstyle \dd{ \eps_{23} } &=(0,-x,0). 
\end{align*}
We can also multiply the generators by linear functions. We have to consider
the action on the generators modulo the ideal $(u^2,yu-x^2,xu)$.
We find that
$u \textstyle \dd { \eps_{11} } = u \textstyle \dd { \eps_{21} }
= u \textstyle \dd { \eps_{22} } = u \textstyle \dd { \eps_{23} }
 = x \textstyle \dd { \eps_{21} } = (0,0,0)$. 
The remaining actions are as follows
\begin{gather*}
 u \textstyle \dd { \eps_{13} } =-x\textstyle \dd { \eps_{11} }
 = -x \textstyle \dd { \eps_{23} }=y\textstyle \dd { \eps_{21} }
 =(0,x^2,0)\\
 x \textstyle \dd { \eps_{22} } =- u \textstyle \dd { \eps_{12} } 
 =(0,0,x^2) \quad 
 y\textstyle \dd { \eps_{22} }=-x\textstyle \dd { \eps_{12}}
 =(x^2,0,y^2)\\
  x\textstyle \dd { \eps_{13} }=-y\textstyle \dd { \eps_{23}}
 =(0,xy,0)\quad 
 -y \textstyle \dd { \eps_{12} }=(xy,0,y^2)\quad
\end{gather*}
and lastly we have that $y\textstyle \dd { \eps_{11} } = -x \textstyle \dd { \eps_{13}} +x \textstyle \dd { \eps_{22} }$.  
We therefore get 6 additional first order deformations. As deformations of determinantal varieties are unobstructed we homogenize with respect to the variable $w$ and write a 12-dimensional family by choosing appropriate representatives. With new names for the deformation variables this can be presented as the claimed maximal minors. 
\end{proof}

\begin{prop}\label{tangent of Hilb} 
Let $Z\subseteq \PP^3$ be the closed subscheme given by the graded ideal 
$I=(z^2,zx,zy,Q)\subseteq k[x,y,z,w]$, 
where $Q=Q(x,y,w)$ is a cubic form. If $Q$ is singular at $(0:0:1)$ then the 
tangent space to the Hilbert scheme at $Z$ has dimension 16. 
If $Q$ is smooth at $(0:0:1)$, then the tangent space has dimension 15.
\end{prop}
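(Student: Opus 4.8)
The plan is to realise the Zariski tangent space as a space of graded homomorphisms and then to trace the dimension jump back to a single linear condition. By the Comparison Theorem of Piene and Schlessinger \cite[Sect. 3]{PieneSchlessinger}, the tangent space at the point corresponding to the saturated ideal $I=(z^2,zx,zy,Q)\subset S=k[x,y,z,w]$ is the degree-zero part $\Hom_S(I,S/I)_0$. The first step is to record this identification and to note that, since $Q$ defines a cubic passing through the point $(0:0:1)$ of the plane $\{z=0\}$, it has no $w^3$ term, so $Q\in(x,y)$; moreover $Q$ is singular at $(0:0:1)$ if and only if $Q$ has no $w^2$-terms at all, i.e.\ $Q\in(x,y)^2$.

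Next I would write down a free presentation of $I$. As $z^2,zx,zy$ are $z\cdot(z,x,y)$, their relations are the three Koszul syzygies $\sigma_1=(x,-z,0,0)$, $\sigma_2=(y,0,-z,0)$, $\sigma_3=(0,y,-x,0)$ in degree $3$; writing $Q=xA+yB$ (possible since $Q\in(x,y)$) produces a further relation $\tau=(0,A,B,-z)$ in degree $4$, coming from $zQ\in(zx,zy)$. A computation of the Hilbert series $H_{S/I}(t)=(1+2t)/(1-t)^2$, combined with a dimension count of the syzygy module degree by degree, shows that $\sigma_1,\sigma_2,\sigma_3,\tau$ generate all relations. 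Applying $\Hom(-,S/I)_0$ then identifies $T=\Hom_S(I,S/I)_0$ with the tuples $(p_1,p_2,p_3,q)$, with $p_i\in(S/I)_2$ and $q\in(S/I)_3$, satisfying the four linear conditions obtained by evaluating on $\sigma_1,\sigma_2,\sigma_3,\tau$.

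The heart of the argument is then the linear algebra, which I would carry out using the Hilbert function ($\dim(S/I)_2=7$, $\dim(S/I)_3=10$, $\dim(S/I)_4=13$) and the splitting $(S/I)_d=\bigl(k[x,y,w]_d/(Q)\bigr)\oplus(z\text{-part})$, resolving the conditions one at a time. The relations from $\sigma_1,\sigma_2$ force the $k[x,y,w]$-part of $p_1$ to vanish and kill the $w^2$-coefficients of $p_2,p_3$. The decisive condition is the one from $\sigma_3$, namely $y\bar p_2-x\bar p_3\in(Q)$ in degree $3$: here $y\bar p_2-x\bar p_3$ never contains a $w^2$-term, so a nonzero multiple $c_3Q$ can occur on the right precisely when $Q$ has no $w^2$-term, that is, exactly when $Q$ is singular at $(0:0:1)$. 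This produces one extra free parameter $c_3$ in the singular case. Finally the condition from $\tau$ turns out to impose nothing new: after the splitting $\bar p_2=x\ell+c_3R_2$, $\bar p_3=y\ell+c_3R_3$ dictated by $\sigma_3$, the potentially obstructing term cancels identically modulo $Q$, while the $z$-component of the $\tau$-condition merely pins down one coordinate of $q$. Assembling the free parameters gives $\dim T=15$ in the smooth case and $\dim T=16$ in the singular case.

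I expect the main obstacle to be the bookkeeping in this rank computation: one must track carefully how each multiplication map $(S/I)_2\to(S/I)_3$ and $(S/I)_2\oplus(S/I)_3\to(S/I)_4$ acts on the two summands, verify that the $\tau$-condition is genuinely vacuous, and isolate the single parameter $c_3$ as the sole source of the dimension jump. Everything outside of this parameter is symmetric between the two cases, so once the $\sigma_3$-condition is analysed the conclusion follows directly.
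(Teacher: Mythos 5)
Your proposal is correct and follows essentially the same route as the paper: both compute the tangent space as $\Hom_S(I,S/I)_0$ from the same presentation of $I$ (the three Koszul syzygies among $z^2,zx,zy$ plus the one syzygy coming from writing $Q\in(x,y)$), and both trace the dimension jump to the single extra normal-module element $zx\mapsto zx+\eps g$, $zy\mapsto zy+\eps f$ (your parameter $c_3$, the paper's generator $(g,f,0,0)$), which exists precisely when $Q\in(x,y)^2$, i.e.\ when $Q$ is singular at $(0:0:1)$. The only cosmetic difference is that the paper dehomogenizes to the chart $\{w=1\}$ and lists explicit module generators, while you keep the grading and count graded pieces; the linear algebra is the same.
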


\begin{proof} Write $Q=Q(x,y,w)=xf-yg$ in $A=k[x,y,z,w]$.
We then have the free resolution
 \[
 0 \longleftarrow A/I \longleftarrow A \mapleft \varphi A(-2)^3\oplus A(-3)
 \mapleft R A(-3)^3\oplus A(-4) 
 \]
 where $\varphi=(zx,zy,z^2,Q)$ is the map given by the generators of the ideal. The relation matrix $R$ is then
 \[
 \begin{pmatrix}
 0  & z & -y & -f \\
 -z & 0 & x & g \\
 y & -x & 0 & 0 \\
 0 & 0 & 0 & z
 \end{pmatrix}\;.
 \]
Again it is convenient to work in the affine chart $\{w=1\}$. We compute deformations
as described in \cite[Section I.6]{ArtinTata} and \cite{Stevens}.
We obtain
generators for the global sections of the normal sheaf 
$N_{Z/\AA^3}$
as the syzygies of the
transpose of the relation matrix $R$, but computed modulo the ideal 
$I=(zx,zy,z^2, Q)$. 
We give the generators of the normal sheaf by their action on the 
vector $\varphi$ of generators of the ideal.  
If $Q=xf-yg$ is singular at $(0:0:1)$ then neither $f$ nor $g$ 
have non-zero constant term. 
It follows that  generators of the normal sheaf are 
 \begin{gather*} 
 (z,0,0,0),\quad (0,z,0,0), \quad (0,0,z,0), \quad (0,0,0,z),\\
 (0,0,0,x),\quad(0,0,0,y),\quad 
 (x,y,0,0),\quad 
 (g,f,0,0).
 \end{gather*}
As the degree of a perturbation can be at most that of the
element of $\varphi$, so $2,2,2,3$ respectively, and one has to compute
modulo the ideal $I$, we get  8 additional deformations  by multiplying  
the last four generators above with the variables $x$ and $y$.
It follows that the dimension of the Zariski tangent space to $\Hilb$ is 16. 
If $Q$ has linear terms, the last generator $(g,f,0,0)$ 
is not present, and $(z,0,0,0)$ is to be replaced by $(z,0,0,f)$, and $(0,z,0,0)$ 
by $(0,z,0,-g)$. Then  the dimension of the Zariski tangent space is 15. 
\end{proof}

\begin{rem}
The ideal $(zx,zy,z^2,Q)$ with $Q$ smooth at the origin gives a
plane cubic through the origin with an embedded point at the
origin, so on the curve. It is also possible to have a plane cubic and
a point in its plane but not on the cubic. Such a curve is not a small
deformation of a curve in the intersection of the two components
of the Hilbert scheme. Consider the plane cubic $z=x^3+y^3+w^3=0$
and a point at $(0:0:t:1)$.
For $t\neq0$ the homogeneous ideal is generated by
\[
zx, zy, z(z-tw), zw^2-t(x^3+y^3+w^3)
\]
but if we specialize to $t=0$ we also need the equations
$x(x^3+y^3+w^3)$ and $y(x^3+y^3+w^3)$.
The result is the non-saturated ideal
\[
zx, zy, z^2, zw^2, x(x^3+y^3+w^3), y(x^3+y^3+w^3)\;.
\]
This is a case where the Comparison Theorem of \cite{PieneSchlessinger}
does not apply.
\end{rem}

\begin{lemma}\label{PS section 5} 
The graded ideal $I=(z^2, zx, zy,x^3)\subseteq k[x,y,z,w]$  has a universal deformation
space given by the ideal \[J=(b_{12}c_{13},b_{12}c_{14},b_{12}c_{15},
b_{12}c_{16})\] in $A=k[a_1, \ldots, a_{11}, b_{12}, c_{13}, \ldots, c_{16}]$. 
The universal family 
is given by the ideal in $(A/J)[x,y,z,w]$ generated by the four elements
\begin{gather*} 
\tilde z\tilde x +b_{12}\tilde a_3\tilde x-b_{12}\tilde a_6\tilde y,\quad 
\tilde z\tilde y  -b_{12}\tilde x(\tilde x+a_8w),\\
\tilde z^2+c_{16}\tilde zw+b_{12}\tilde a_3\tilde z-b_{12}^2\tilde a_6(\tilde x+a_8w)\\
\tilde x^3 +\tilde a_3\tilde x\tilde y-\tilde a_6\tilde y^2+a_8\tilde x^2w
+c_{13}\tilde xw^2+c_{14}\tilde yw^2+c_{15}\tilde zw^2+c_{15}c_{16}w^3,
\end{gather*}
where
 $\tilde x=x+a_2w$, $\tilde y=y+a_1w$, 
$\tilde z=z+a_{11}w+a_9 x+a_{10} y$,
$\tilde a_3=a_3w+a_4 y+a_5x $ and $\tilde a_6=a_6w+a_7 y$.
\end{lemma}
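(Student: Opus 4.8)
The plan is to compute the versal deformation of the homogeneous ideal $I$ directly, by Artin's method of perturbing generators and relations order by order, following \cite[Section I.6]{ArtinTata} and \cite{Stevens}, in the same style as the proofs of Lemma \ref{defo_curve} and Proposition \ref{tangent of Hilb}. As there it is convenient to dehomogenize in the affine chart $\{w=1\}$, carry out the computation there, and rehomogenize with respect to $w$ at the end.

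First I would fix a free resolution $A \mapleft{\varphi} A^4 \mapleft{R} A^m \leftarrow \cdots$ of $A/I$, where $\varphi=(z^2,zx,zy,x^3)$ is the row of generators and $R$ is the matrix of syzygies among them. The first-order deformations are computed as the syzygies of $R^{t}$ modulo $I$, exactly as in the cited proofs; taking $Q=x^3$ in Proposition \ref{tangent of Hilb} shows that the tangent space $T^1$ has dimension $16$, in agreement with the $16$ parameters $a_1,\dots,a_{11},b_{12},c_{13},\dots,c_{16}$. I would then choose explicit representatives for these $16$ perturbations of $\varphi$. The substitutions $\tilde x,\tilde y,\tilde z,\tilde a_3,\tilde a_6$ are designed to absorb the coordinate directions $a_i$, so that the genuinely obstructed directions are isolated in $b_{12}$ and $c_{13},\dots,c_{16}$; making this normalization is the bookkeeping that turns the raw $T^1$ into the displayed generators.

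The heart of the argument is the obstruction calculation. After perturbing $\varphi$, I would attempt to lift the syzygies so that the flatness relation $\varphi\cdot R=0$ holds to successively higher order in the deformation parameters. At first order this is automatic; at second order the portion of $\varphi\cdot R$ that cannot be absorbed by correcting $R$ represents a class in $T^2$ and produces the quadratic obstruction equations, which I expect to come out as exactly $b_{12}c_{13},b_{12}c_{14},b_{12}c_{15},b_{12}c_{16}$, so that the base is $\Spec(A/J)$. It then remains to check that over $A/J$ the four displayed elements admit syzygies liftable to all orders, i.e.\ that the family they generate in $(A/J)[x,y,z,w]$ is flat and no further obstructions appear; the cleanest way is to write down the lifted relation matrix explicitly and verify that the product of the generator row with it is congruent to $0$ modulo $J$.

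The step I expect to be the main obstacle is precisely this lifting and obstruction computation. Since $I$ is not determinantal, the deformation is genuinely obstructed and the unobstructedness shortcut available in Lemma \ref{defo_curve} does not apply; one must show both that the quadrics $b_{12}c_{1j}$ are forced and that they suffice. Throughout I would track all numerical coefficients carefully so as to rule out any hidden division by $2$ or $3$ --- this is the whole reason for replacing Piene and Schlessinger's Lemma 6 \cite{PieneSchlessinger} with a characteristic-free statement. Finally, versality follows because the Kodaira--Spencer map is by construction an isomorphism onto $T^1$ and all obstructions have been accounted for; the resulting family is the universal deformation from which Proposition \ref{PS} is deduced.
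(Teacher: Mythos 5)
Your proposal follows essentially the same route as the paper's proof: working in the chart $\{w=1\}$, reading off the $16$-dimensional tangent space from Proposition \ref{tangent of Hilb} with $Q=x^3$, perturbing the generators, lifting the syzygies order by order à la \cite[Section I.6]{ArtinTata} and \cite{Stevens}, extracting the quadratic obstructions $b_{12}c_{13},\dots,b_{12}c_{16}$ from the failure of $\widetilde\varphi\widetilde R=0$, and concluding by checking that this product vanishes identically modulo $J$. The only thing separating your sketch from the paper's argument is the explicit execution of the lift (the displayed $\widetilde\varphi$ and $\widetilde R$), which your plan correctly identifies as the remaining work.
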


\begin{proof} 
We compute again in the chart  $\{w=1\}$.
By Proposition \ref{tangent of Hilb} there are 16 first order deformations,
which we have to lift to higher order,
as described in \cite[Section I.6]{ArtinTata} and \cite{Stevens}.  We present these pertubations in the following way:
\begin{gather*}
zx +a_2 z+a_9x^2+a_{10}xy+a_{11}x\\
zy  +a_1z+a_9xy+a_{10}y^2+a_{11}y-b_{12}x^2\\
z^2+(c_{16}+2a_{11})z\\
x^3 +(a_3+a_4xy+a_5x)xy-(a_6+a_7y)y^2+(a_8+3a_2)x^2+c_{13}x+c_{14}y+c_{15}z
\end{gather*}
To avoid long formulas we continue the computation with less
variables. First of all, we use coordinate transformations to simplify.
Explicitly, we set $\tilde x=x+a_2$, $\tilde y=y+a_1$, and 
$\tilde z=z+a_{11}+a_9 x+a_{10}y$. This is no loss of generality as one can 
substitute in the final
formulas. Moreover, to ensure a finite computation, we use only
deformations of negative weight. The computation
in \cite{PieneSchlessinger} shows that not all deformation variables
occur in the equations for the base space. 
We put $\tilde a_3=a_3+a_4 y+a_5 x$, $\tilde a_6=a_6+a_7 y$
and  compute with the variables $\tilde a_3$ and $\tilde a_6$,
We give the variable $\tilde z$ weight 2 enabling us to give  $b_{12}$ weight 1. The infinitesimal deformation now becomes 
\begin{gather*}
\tilde z\tilde x, \quad
\tilde z\tilde y  -b_{12}\tilde x^2, \quad
\tilde z^2+c_{16}\tilde z,\\
\tilde x^3 +\tilde a_3\tilde x\tilde y-\tilde a_6\tilde y^2+a_8\tilde x^2
+c_{13}\tilde x+c_{14}\tilde y+c_{15}\tilde z
\end{gather*}

We have to extend these generators of the ideal to higher order (in the 
deformation variables). To ensure flatness we have to lift the equations $\varphi R=0$
to $\widetilde \varphi \widetilde R=0$. We therefore
compute the new relation matrix and then adjust the
generators. The result of our computation is a vector $\widetilde \varphi$ with components
\begin{gather*}
\tilde z\tilde x +b_{12}\tilde a_3\tilde x-b_{12}\tilde a_6\tilde y \\
\tilde z\tilde y  -b_{12}\tilde x(\tilde x+a_8)\\
\tilde z^2+c_{16}\tilde z+b_{12}\tilde a_3\tilde z
         -b_{12}^2\tilde a_6(\tilde x+a_8)\\
\tilde x^3 +\tilde a_3\tilde x\tilde y-\tilde a_6\tilde y^2+a_8\tilde x^2
+c_{13}\tilde x+c_{14}\tilde y+c_{15}\tilde z+c_{15}c_{16}
\end{gather*}
with relation matrix $\widetilde R$
\[
 \begin{pmatrix}
  -b_{12}(\tilde x+a_8)&\tilde z+c_{16}& -\tilde y& -\tilde x(\tilde x+a_8)-c_{13} \\
 -\tilde z-c_{16}-b_{12}\tilde a_3& b_{12}\tilde a_6 & \tilde x & 
        -c_{14}-\tilde a_3\tilde x+\tilde a_6\tilde y \\
 \tilde y & -\tilde x & 0 & -c_{15} \\
 0 & 0 & b_{12} & \tilde z
 \end{pmatrix}\;.
 \]
Modulo terms of degree three in the deformation variables
we have
\[
\widetilde \varphi \widetilde R\equiv
(b_{12}c_{16}\tilde x^2,0,
b_{12}c_{13}\tilde x+b_{12}c_{14}\tilde y+b_{12}c_{15}\tilde z,
b_{12}c_{14}\tilde x^2)\;.
\]

These terms cannot be cancelled by additional higher order terms of $\varphi$,
so the (quadratic) obstruction to lift the deformation is given by the ideal

$J=(b_{12}c_{13},b_{12}c_{14},b_{12}c_{15},
 b_{12}c_{16})$. As the product $\widetilde \varphi \widetilde R$
is in fact equal to zero modulo the ideal $J$, 
the family given is indeed 
the universal family.
\end{proof}

\begin{rem}Compared with \cite{PieneSchlessinger} our formulas differ
 (apart from the names and signs of the variables) in two respects.
 Our $a_5$
is not present in \cite{PieneSchlessinger}, as there it was transformed away
by a coordinate transformation, which does not work in characteristic 3.
But the main difference is the absence of a factor 2 in front of $\tilde a_3$ 
--- it is put there in  \cite{PieneSchlessinger} to complete the square
in the third generator, transforming the term $b_{12}\tilde a_3z$ away; this
cannot be done in characteristic two. 
\end{rem}

\begin{rem} The naming of the deformation variables is chosen such that the variables $a_1, \ldots, a_{11}$ represent deformations in the intersection
of the two components of the Hilbert scheme, the variable $b_{12}$ is in the direction of the twisted
cubic component and the variables $c_{13}, \ldots, c_{16}$ represent deformations in the direction of the other component.
\end{rem}

\section{The moduli space of  stable sheaves}\label{app:FT}
In this section we relate  the moduli space of stable sheaves on $\PP^3$
with Hilbert polynomial $3t+1$ 
to the Hilbert scheme.  The definition of (semi-)stability is recalled in 
\cite[Section 2]{FreiermuthTrautmann}. Note that they work over a 
fixed,  algebraically closed field of characteristic zero.

 Every stable sheaf $\calF$ 
 on $\PP^3$ with Hilbert polynomial $3t+1$ has
a free resolution of the form  \cite{FreiermuthTrautmann}
\[
0 \leftarrow \calF \leftarrow
\calO_{\PP^3}\oplus \calO_{\PP^3}(-1)  \mapleft A
\calO_{\PP^3}(-1) \oplus 3\calO_{\PP^3}(-2) \mapleft B
2 \calO_{\PP^3}(-3)  \leftarrow 0
\]
Any  flat deformation of $\calF$ is obtained by perturbing the
matrices $A$ and $B$  to $\widetilde A$ and $\widetilde B$
such that $\widetilde A\widetilde B=0$.

We consider again the most singular point on the intersection 
of both components of $M^{3t+1}_{\PP^3}$, which is the sheaf
$\calF=i_*\calO_X$ with $(X,i)$ the CM-curve of Lemma \ref{degenerate}.
It has a presentation with matrices
\[
A=\begin{pmatrix}z&0&0&-x^2\\0&z&x&y\end{pmatrix}
\qquad \text{and} \qquad
B=\begin{pmatrix}0&x^2\\-x&-y\\z&0\\0&z\end{pmatrix}.
\]

\begin{lemma}\label{FT section 7} Let $\calF$ be the stable sheaf
with presentation matrix $A$ above.
An affine open chart around the corresponding point in the moduli space 
$M^{3t+1}_{\PP^3}$, 
is given by the ideal $J=(b_{12}c_{13},b_{12}c_{14})$ in 
$k[a_1, \ldots, a_{11}, b_{12}, c_{13},  c_{14}]$. 
The universal family over that affine chart has a presentation
with matrices
\[
\widetilde A= 
\begin{pmatrix}\tilde z &-b_{12}\tilde a_6(\tilde x+a_8) &
       \tilde a_3\tilde x-\tilde a_6\tilde y + c_{14} &-\tilde x(\tilde x+a_8)-c_{13} \\
   -b_{12} & \tilde z+b_{12}\tilde a_3 & \tilde x & \tilde y
\end{pmatrix}\]
and
\[
\widetilde B=
\begin{pmatrix}-\tilde a_3\tilde x+\tilde a_6\tilde y-c_{14} 
      &\tilde x(\tilde x+a_8)+c_{13} \\ -\tilde x & -\tilde y\\
   \tilde z &b_{12}(\tilde x+a_8) \\ b_{12}\tilde a_6 & \tilde z+b_{12}\tilde a_3
\end{pmatrix}
\]
where again
 $\tilde x=x+a_2w$, $\tilde y=y+a_1w$, 
$\tilde z=z+a_{11}w+a_9 x+a_{10} y$,
$\tilde a_3=a_3w+a_4 y+a_5 $ and $\tilde a_6=a_6w+a_7 y$.

The map $M^{3t+1}_{\PP^3}\ra \Hilb_{\PP^3}$ is  given by the identity map on the variables 
$a_1, \ldots, a_{11}, b_{12}, c_{13},  c_{14}$.
\end{lemma}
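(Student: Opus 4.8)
The plan is to mirror the Hilbert scheme computation of Lemma \ref{PS section 5}, now deforming the stable sheaf $\calF$ rather than its Fitting ideal, using the principle recalled above that every flat deformation of $\calF$ is obtained by perturbing the two resolution matrices to $\widetilde A,\widetilde B$ with $\widetilde A\widetilde B=0$. As there, I would pass to the affine chart $\{w=1\}$ and compute as in \cite{ArtinTata,Stevens}. First I would determine the first-order deformations: writing $\widetilde A=A+\eps A'$ and $\widetilde B=B+\eps B'$, the complex condition becomes $A'B+AB'=0$, and dividing out the trivial perturbations coming from automorphisms of the three free modules of the resolution yields the tangent space $\operatorname{Ext}^1(\calF,\calF)$. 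Homogeneity of the entries restricts the admissible perturbations severely; in particular the only scalar freedom is the degree-zero $(2,1)$-entry of $A$, which becomes the parameter $b_{12}$. I expect a $14$-dimensional tangent space, matching the transversal intersection of the $12$- and $13$-dimensional components of \cite{FreiermuthTrautmann}, and I would choose representatives named $a_1,\dots,a_{11},b_{12},c_{13},c_{14}$ so as to be compatible with Lemma \ref{PS section 5}, simplifying the equations by the same coordinate substitutions $\tilde x=x+a_2$, $\tilde y=y+a_1$, $\tilde z=z+a_{11}+a_9x+a_{10}y$, $\tilde a_3=a_3+a_4y+a_5x$ and $\tilde a_6=a_6+a_7y$.

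Next I would lift this infinitesimal family to all orders. Perturbing the displayed matrices and imposing $\widetilde A\widetilde B=0$ order by order, one adjusts the higher-degree entries of $\widetilde A$ and $\widetilde B$ to cancel the liftable part of the product; the residual obstruction is then $\widetilde A\widetilde B$ computed modulo these corrections. I expect this residue to be supported in the quadratic terms $b_{12}c_{13}$ and $b_{12}c_{14}$, so that $\widetilde A\widetilde B=0$ holds identically over the quotient ring $k[a_1,\dots,c_{14}]/J$ with $J=(b_{12}c_{13},b_{12}c_{14})$, producing the matrices $\widetilde A,\widetilde B$ in the statement. Since the computed tangent space has dimension $14$, equal to the number of base variables, and a stable sheaf is simple, the deformation functor is pro-representable and this flat family is its universal deformation; hence $k[a_1,\dots,c_{14}]/J$ is the sought local ring of $M^{3t+1}_{\PP^3}$.

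For the comparison map I would use that $i_*\calO_X$ is presented as an $\calO_{\PP^3}$-module by $\widetilde A$, so the zeroth Fitting ideal $\Fitt^0(i_*\calO_X)$ is the ideal $I_2(\widetilde A)$ of $2\times 2$ minors. Computing these six minors and reducing modulo $J$, I expect to recover precisely the four generators of the universal family of Lemma \ref{PS section 5} after setting $c_{15}=c_{16}=0$. This identifies the induced homomorphism $\calO_{\Hilb}\to\calO_{M}$ on the two affine charts as the identity on $a_1,\dots,a_{11},b_{12},c_{13},c_{14}$ together with $c_{15},c_{16}\mapsto 0$, which is exactly the asserted description of the map $M^{3t+1}_{\PP^3}\ra\Hilb_{\PP^3}$.

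The main obstacle will be the higher-order lifting: one must propagate corrections simultaneously through both $\widetilde A$ and $\widetilde B$ while keeping every entry homogeneous of its prescribed degree, and then verify that the surviving obstruction is cut out by exactly the two monomials $b_{12}c_{13},b_{12}c_{14}$ and not a larger ideal. A secondary delicate point is fixing the normalisation of the trivial deformations consistently with Lemma \ref{PS section 5}, since the clean matching of the Fitting ideal in the last step relies on that choice.
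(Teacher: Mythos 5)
Your first two steps coincide with the paper's argument: one computes the infinitesimal deformations of the matrices $A$ and $B$ directly (the tangent space is indeed $14$-dimensional, consistent with a transversal intersection of a $12$- and a $13$-dimensional component along an $11$-dimensional locus), and then lifts the identity $AB=0$; the residual product $\widetilde A\widetilde B$ has all entries in $(b_{12}c_{13},b_{12}c_{14})$ and cannot be cancelled by further corrections, which gives the base ideal $J$.

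The gap is in your last step. You propose to realise the map $M^{3t+1}_{\PP^3}\ra \Hilb_{\PP^3}$ on the \emph{whole} chart by taking $\Fitt^0=I_2(\widetilde A)$ and claim the minors reproduce the family of Lemma \ref{PS section 5} with $c_{15}=c_{16}=0$. This fails on the component $b_{12}=0$. Setting $b_{12}=0$ in $\widetilde A$, the six minors are $\tilde z^2$, $\tilde z\tilde x$, $\tilde z\tilde y$, $\tilde z(\tilde a_3\tilde x-\tilde a_6\tilde y+c_{14}w^2)$, $\tilde z(\tilde x(\tilde x+a_8w)+c_{13}w^2)$ and the cubic $Q$; modulo the first three the fourth and fifth reduce to $c_{14}\tilde zw^2$ and $c_{13}\tilde zw^2$, which do \emph{not} lie in the ideal $(\tilde z^2,\tilde z\tilde x,\tilde z\tilde y,Q)$ of Lemma \ref{PS section 5}. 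Hence for $(c_{13},c_{14})\neq(0,0)$ the Fitting ideal contains $\tilde zw^2$, its saturation is $(\tilde z,Q)$ (Hilbert polynomial $3t$, the embedded point is lost), and the family of Fitting schemes is not flat over that component — exactly the phenomenon illustrated by the example after Theorem \ref{main}. The paper states explicitly that the Fitting ideal only induces the map on the component $c_{13}=c_{14}=0$; on the component $b_{12}=0$ it instead invokes the Freiermuth--Trautmann identification of that component with the relative universal cubic in the bundle of hyperplanes of $\PP^3$, the map to the Hilbert scheme sending a pair (cubic in a hyperplane, point on it) to the cubic with an embedded point at that point. You would need to supply this second construction (or some substitute) and check that the two descriptions agree on the intersection before you can assert that the morphism is the identity on the variables $a_1,\ldots,a_{11},b_{12},c_{13},c_{14}$.
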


\begin{proof}
Infinitesimal deformations of the matrices $A$ and $B$ can be found
by direct computation, see also  \cite[Section 7]{FreiermuthTrautmann}.
Then one has to try to lift the equation $AB=0$.
The result is as stated. 
One has
\[
\widetilde A \widetilde B =
\begin{pmatrix}
-c_{13}b_{12}\tilde a_6  & c_{14}b_{12}(\tilde x+a_8)-c_{13}b_{12}\tilde a_3 \\
 c_{14}b_{12} & -c_{13}b_{12}
\end{pmatrix}
\]
Therefore the obstruction to lift the equation is given by the ideal $J=(b_{12}c_{13},b_{12}c_{14})$.

On the component $c_{13}=c_{14}=0$ the map to the Hilbert scheme
is induced by taking the $0^\mathrm{th}$ Fitting ideal. This does not work
on the component $b_{12}=0$. Freiermuth and Trautmann show that
the larger component is isomorphic to the relative universal cubic in the bundle
of hyperplanes of $\PP^3$ \cite[\S6.5]{FreiermuthTrautmann}. The map to the
Hilbert scheme associates to a pair  consisting of a cubic in a hyperplane 
and  a point on it the scheme consisting of this cubic with an embedded point
at this point.
\end{proof}

\begin{rem}
The matrix $\widetilde A$, restricted to the component $c_{13}=c_{14}=0$,
gives a presentation of  $i_{R*}\calO_{X_R}$ with $(X_R,i_R)$ the universal
family of  Lemma \ref{degenerate}.
To see this we use a coordinate transformation from the coordinates in the
Lemma.
We set $\tilde x=x+a_2w$, $\tilde y=y+a_1w$, 
$\tilde z=z-b_{11}w-b_{10} y-b_9 x$,
$\tilde a_3=a_3w+a_4 y+a_5x $ and $\tilde a_6=a_6w+a_7 y$.
We furthermore put $\tilde a_8=a_8-a_2$.
Then the curve becomes in the chart $\{w=1\}$
\[
\begin{pmatrix}
 \tilde x  & \tilde a_6&  u  \\
 \tilde y & u + \tilde a_3 & \tilde x +\tilde a_8
\end{pmatrix}
\]
and the morphism $i_R^*$ is  $(\tilde x, \tilde y,\tilde z)\mapsto
(\tilde x,\tilde y,b_{12}u)$.
\end{rem}


\ifx\undefined\bysame
\newcommand{\bysame}{\leavevmode\hbox to3em{\hrulefill}\,}
\fi

\end{document}